\documentclass[10pt, reqno]{amsart}
\usepackage{amsaddr}
\usepackage{mathtools}
\usepackage{amsmath}
\usepackage{amssymb}

\newtheorem{thm}{Theorem}[section]
\newtheorem{lemma}[thm]{Lemma}
\newtheorem{prop}[thm]{Proposition}
\theoremstyle{definition}
\newtheorem{defn}[thm]{Definition}
\theoremstyle{remark}
\newtheorem{rem}[thm]{Remark}
\numberwithin{equation}{section}

\newcommand{\tr}{\mathrm{Tr}}
\newcommand{\wf}{\mathrm{WF}}
\newcommand{\ilf}{\mathrm{IF}}
\newcommand{\wo}{\mathrm{WO}}
\newcommand{\lht}{\mathrm{LT}}

\DeclareMathOperator{\proj}{proj}
\DeclareMathOperator{\type}{type}
\DeclareMathOperator{\htt}{ht} 
\DeclareMathOperator{\lh}{lh}
\DeclareMathOperator{\Mod}{Mod}
\DeclareMathOperator{\Red}{Red}
\DeclareMathOperator{\Div}{Div}
\DeclareMathOperator{\Tree}{Tree}
\DeclareMathOperator{\Ulm}{Ulm}

\usepackage{hyperref}

\begin{document}

\title[Idealistic equivalence relations and the Lusin derivative]{Idealistic equivalence relations and\\ the Lusin derivative}

\author{Bilge Köksal\textsuperscript{1}, Mateusz Lichman\textsuperscript{2}, and Sławomir Solecki\textsuperscript{1}}
\address{\textsuperscript{1}Department of Mathematics, Malott Hall, Cornell University\\ Ithaca, NY 14853, USA,\\
\textsuperscript{2}Institute of Mathematics, Lodz University of Technology, \\ al. Politechniki 8, 93-590 Łódź, Poland}
\email{bk463@cornell.edu}
\email{lichmanmateusz@gmail.com} 
\email{ss3777@cornell.edu}

\thanks{This paper has been completed while the second author was a Doctoral Candidate in the Interdisciplinary Doctoral School at the Lodz University of Technology, Poland. The research of the second author was funded in whole by National Science Centre, Poland, grant 2025/57/N/ST1/03025.} 

\thanks{The third author was supported by NSF grant DMS-2553992.}


\subjclass{}
\keywords{}

\begin{abstract}
We give a proof, that does not rely on analytic determinacy, of the existence of an idealistic analytic equivalence relation with Borel classes that is not classwise Borel isomorphic to an orbit equivalence relation. 
This result answers a question of Becker, and Calderoni and Motto Ros. Our proof proceeds through an analysis of Borel complexity of certain sets of countable
rooted trees defined via the notion of the Lusin derivative. 
\end{abstract}

\maketitle

\section{Introduction}

The main objects of study in this paper are equivalence relations on standard Borel spaces. We start with recalling two classes of equivalence relations and two methods of comparing equivalence relations.

\begin{defn}
    We say that an equivalence relation $E$ on a standard Borel space $X$ is an \textbf{orbit equivalence relation} if there exists a Polish group $H$ and a Borel action $a:H\times X\to X$ inducing $E$, that is, 
    \[
    xEy \iff \exists h\in H \ (a(h, x)=y).
    \]
\end{defn}

We note that each orbit equivalence relation is analytic and has Borel equivalence classes. 

The following notion of idealistic equivalence relation was introduced in \cite{Kechris_1992} and \cite{Kechris_Louveau_1997}.

\begin{defn}\label{def-idealistic}
    We say that an analytic equivalence relation $E$ on a standard Borel space $X$ is \textbf{idealistic} if there is a map assigning to each equivalence class $C\in X/E$ a ccc $\sigma$-ideal $I_C$ on $C$ such that $C\notin I_C$ and the map $C\mapsto I_C$ is Borel in the following sense: for each Borel $A\subseteq X^2$, the set $A_I\subseteq X$ defined by
    \[
    x\in A_I \iff \{y\in [x]_E: (x, y)\in A\}\in I_{[x]_E}
    \]
    is Borel. 
\end{defn}

Observe that each orbit equivalence relation $E$ is idealistic. Indeed, if $E$ is induced by a Borel action $a$ of a Polish group $H$ on a standard Borel space $X$, then the following assignment $C\mapsto I_C$ witnesses that $E$ is idealistic: for $A\subseteq C$ we let
\[
A\in I_C \iff \{h\in H: a(h, x)\in A\} \text{ is meager in } H,
\]
with $x\in C$ fixed \cite{Kechris_1992}.

The following definition describes the classical notion of Borel reducibility. 

\begin{defn}
    Let $E, F$ be equivalence relations on standard Borel spaces $X, Y$, respectively. We say that $E$ is \textbf{Borel reducible to} $F$, in symbols $E\leq_B F$, if there is a Borel function $f:X\to Y$ such that for all $x, y\in X$
    \[
    xEy \iff f(x)F f(y).
    \]
    In this case we say that $f$ is a \textbf{reduction of $E$ to $F$}. If $E\leq_B F$ and $F\leq_B E$, then we say $E, F$ are \textbf{bireducible}.
\end{defn}

Finally, we recall a notion of comparison of equivalence relations that is stronger than the classical Borel reduction above.

\begin{defn}\label{Definition classwise Borel isomorphic}
    Let $E, F$ be analytic equivalence relations on standard Borel spaces $X, Y$, respectively. We say that $E$ is \textbf{classwise Borel isomorphic to} $F$ if there are Borel reductions $f:X\to Y$, $g:Y\to X$ of $E$ to $F$ and $F$ to $E$, respectively, such that their factorings to the quotient spaces $\hat{f}:X/E\to Y/F$ and $\hat{g}:Y/F\to X/E$ are bijections and $\hat{f}^{-1}=\hat{g}$. In this case we say that $g$ is a \textbf{classwise Borel inverse of} $f$.
\end{defn}

In \cite{BeckersNotes}, Becker constructed an equivalence relation, later called $E_{\mathbb B}$ \cite{calderoni2025structuralresultsidealisticequivalence}, that closely resembles an orbit equivalence relation in that it is analytic, has Borel equivalence classes, and is idealistic. However, assuming analytic determinacy, he proved that $E_\mathbb B$ is not an orbit equivalence relation. Calderoni and Motto Ros \cite{calderoni2025structuralresultsidealisticequivalence}, also assuming analytic determinacy, improved Becker's result by showing that $E_{\mathbb B}$ is not even classwise Borel isomorphic to an orbit equivalence relation. 
Becker \cite[Question 1]{BeckersNotes} and Calderoni and Motto Ros \cite[Question 4.1]{calderoni2025structuralresultsidealisticequivalence}
asked if the use of analytic determinacy can be removed. 

The main result of the current paper removes the assumption of analytic determinacy from the above theorems, thereby answering the questions from 
\cite{BeckersNotes} and \cite{calderoni2025structuralresultsidealisticequivalence}. Furthermore, we define a related, yet easier to describe and, perhaps, more natural equivalence relation $F_{\mathbb B}$, and show, again without assuming analytic determinacy, that it has the same properties as $E_{\mathbb B}$. 
The equivalence relation 
\[
F_{\mathbb B}
\] 
is defined on the space of (codes of) countable abelian $p$-groups, for a fixed prime number $p$. It makes two such groups $H_1, H_2$ equivalent precisely when the quotients of $H_1$ and $H_2$ by their maximal divisible subgroups are isomorphic. 

\begin{thm}\label{thm-Beckers-relation-intr}
    Relations $E_{\mathbb B}$ and $F_{\mathbb B}$ are not classwise Borel isomorphic to an orbit equivalence relation.  
\end{thm}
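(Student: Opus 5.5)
We argue by contradiction, using a boundedness principle for orbit equivalence relations. Suppose $F_{\mathbb B}$ (resp.\ $E_{\mathbb B}$) is classwise Borel isomorphic to an orbit equivalence relation $E_H^Y$ induced by a Borel action of a Polish group $H$ on $Y$. The key ZFC fact about orbit equivalence relations is that they admit a uniform Borel bound on the complexity of their classes: by Sami's/Becker--Kechris's results, there is a countable ordinal $\alpha$ with every orbit $\Pi^0_\alpha$ (more precisely, via a Vaught-transform argument, a Borel map computing codes of all orbits with ranks bounded below $\omega_1$). Classwise Borel isomorphism transfers such a bound. If $f\colon X\to Y$ and $g\colon Y\to X$ are the reductions with $\hat g=\hat f^{-1}$, then for $x\in X$ we have $[x]_{F_{\mathbb B}}=f^{-1}([f(x)]_{E_H})$. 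Hence the classes of $F_{\mathbb B}$ have uniformly bounded Borel rank, and in fact more: uniformly in $x$, a Borel code of $[x]$ can be computed from $x$ in a $\Delta^1_1$ way. So the first step is to isolate a concrete consequence of this form. For example, the set $\{(x,c): c \text{ codes } [x]\}$ is analytic with all ranks bounded, or the saturations of Borel sets are Borel, uniformly.

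The second step exhibits unboundedness for $F_{\mathbb B}$ without determinacy. Classes of $F_{\mathbb B}$ are determined by the reduced part $H/D(H)$, and the class of $H$ is $\{H' : H'/D(H')\cong H/D(H)\}$. By Ulm theory, this is decided by the Ulm invariants up to the length $\lambda$ of $H/D(H)$. The set of groups with a fixed reduced part of length $\lambda$ has Borel rank growing with $\lambda$. Via the standard coding of countable abelian $p$-groups by rooted trees, $\lambda$ is a derivative rank on trees. The paper's announced tool, the Lusin derivative, presumably serves to compute this rank exactly: we would show that for each $\alpha<\omega_1$ the set of trees whose Lusin-derivative rank equals $\alpha$, or the associated class, is not $\Pi^0_{\beta}$ for $\beta$ small relative to $\alpha$. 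This would be proved by a direct Wadge-style reduction, by induction on $\alpha$, of known complete sets, avoiding determinacy by constructing explicit continuous reductions rather than invoking Wadge's lemma. This yields unbounded Borel ranks of classes, which contradicts the first step.

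For $E_{\mathbb B}$ we would either run the same argument with Becker's tree description or find a Borel reduction in both directions that is classwise bijective between $E_{\mathbb B}$ and $F_{\mathbb B}$ (or a sufficiently faithful reduction), so that the unboundedness transfers.

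The main obstacle is the lower-bound step: proving, in ZFC and for every countable ordinal, that the relevant sets of trees defined via the Lusin derivative have genuinely high Borel complexity. Becker's original argument used determinacy exactly there. I would try explicit continuous reductions of the canonical $\Pi^0_\alpha$-complete sets, built by transfinite recursion through tree-gluing operations, into these sets.
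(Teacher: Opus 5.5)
Your first step is false, so the contradiction you aim for cannot arise. Orbits of a Borel action of a Polish group do \emph{not} in general have uniformly bounded Borel rank. By Sami's theorem (see Becker--Kechris), such a bound exists only when the orbit equivalence relation is Borel. Likewise, by boundedness, a $\Delta^1_1$ computation of Borel codes for the classes would make the relation Borel. The orbit equivalence relation $\cong_{\mathcal L}$ has classes of unbounded rank.

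In fact your argument proves too much. It never uses $\hat g=\hat f^{-1}$, only $[x]_{F_{\mathbb B}}=f^{-1}([f(x)]_E)$, and that holds for every Borel reduction. Since $x\mapsto\theta(x,y)$ is a continuous reduction of $F_{\mathbb B}$ to $\cong_{\mathcal L}$ (Proposition~\ref{becker-alt}(ii)), your two steps together would show that $F_{\mathbb B}$ is not even Borel reducible to an orbit equivalence relation, which is false. Put differently, your (correct) second step refutes your first: each $F_{\mathbb B}$-class is a continuous preimage of a $\cong_{\mathcal L}$-class, so unbounded ranks of $F_{\mathbb B}$-classes force unbounded ranks of $\cong_{\mathcal L}$-classes.

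What is missing is a finer property of orbit equivalence relations. The paper uses the topological Glimm--Effros theorem of Becker--Kechris. For a continuous Polish group action on a Polish space to which $\mathbb E_0$ does not Borel reduce, every nonempty invariant $\mathbf{\Pi}^0_{\beta+1}$ set contains a $\mathbf{\Pi}^0_{\beta+1}$ orbit. Accordingly, Lemma~\ref{lemma-Becker-Kechris} needs two inputs your proposal lacks.
\begin{itemize}
\item First, $\mathbb E_0\not\leq_B F_{\mathbb B}$ (Lemma~\ref{lemma_E0_does_not_reduce}). The paper proves this by a Baire category argument that bounds the Ulm length on a comeager set, followed by the $0$--$1$ law applied to the Ulm invariants. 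The hypothesis cannot be dropped: $\mathbb E_0$ is itself an orbit equivalence relation, and the invariant closed set $2^\omega$ contains no $G_\delta$ class.
\item Second, for uncountably many limit $\beta$, you need an invariant set of low complexity all of whose classes are strictly more complex. For $\beta=\omega\beta$, the set $Y(\beta)$ is $\mathbf{\Pi}^0_\beta$, yet every class inside it is $\mathbf{\Pi}^0_{\beta+2}$-hard (Lemma~\ref{becker-lem}). This is where the sharp index $2\alpha+2$ in Theorem~\ref{T:compl} is essential. A qualitative ``rank grows with $\lambda$'' is not enough, and index $2\alpha+1$ would not suffice.
\end{itemize}
The transfer through $f$ and $g$ then uses that $\mathbf{\Sigma}^0_\alpha$-measurable maps preserve $\mathbf{\Sigma}^0_\gamma$ for $\gamma\geq\alpha\omega$ (Lemma~\ref{simple-lemma}); this is why uncountably many $\beta$ are required. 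It also genuinely uses $\hat g=\hat f^{-1}$, to pull the simple orbit back to a class lying inside $Y(\beta)$.

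Two parts of your plan do match the paper. Proving the lower bounds by explicit continuous reductions built from tree operations is what the paper does. So is passing between $E_{\mathbb B}$ and $F_{\mathbb B}$ via a classwise Borel isomorphism.
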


The slightly stronger version of this result is stated in Theorem~\ref{thm-Beckers-relation}. 

Our proof of this theorem proceeds through an analysis of Borel complexity of certain sets of countable rooted trees. By $\omega$ we denote the set of all natural numbers including $0$. We view countable rooted trees as consisting of finite sequences of elements of $\omega$ with the extension order and with the empty sequence $\emptyset$ as the root, so they are subtrees of $(\omega^{<\omega}, \subseteq)$, that is, subsets of $\omega^{<\omega}$ closed downwards with respect to $\subseteq$ and containing $\emptyset$. By $\tr$ we denote the space of all such trees equipped with the Polish topology inherited from the product space $2^{\omega^{<\omega}}$. For $T\in \tr$, we define the usual derivative 
\begin{equation}\label{derivative_definition}
    D(T)=\{s\in \omega^{<\omega}:\exists n\in \omega \ (s^\smallfrown (n)\in T)\}\cup \{\emptyset\}. 
\end{equation}
For an ordinal $\alpha$, denote by $D^\alpha$ the $\alpha$-th iterate of $D$, where for a limit ordinal $\alpha$, the tree $D^\alpha(T)$ is the intersection of the previously produced trees $D^\xi(T)$ for $\xi<\alpha$. 
The \textbf{Lusin height} of $T$, denoted by $|T|$, is the smallest ordinal $\alpha$  such that 
\[
D^{\alpha+1}(T)= D^\alpha(T). 
\]
For a countable ordinal $\alpha$, we consider the set 
\[ 
\lht_{<\alpha} = \{ T\in \tr: |T|<\alpha\}. 
\]

\begin{thm}\label{T:compl}
    For each countable ordinal $\alpha$ and each $k\in\omega$, the set $\lht_{<\omega\alpha+k+1}$ is $\mathbf{\Pi}^0_{2\alpha+2}$-complete.
\end{thm}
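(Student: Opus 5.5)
We prove the upper bound and the hardness separately, both by induction on $\alpha$.

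\emph{Upper bound.} First we compute the complexity of the relation ``$s\in D^\beta(T)$''. The relation $s\in D(T)$ is $\mathbf{\Sigma}^0_1$ in $T$, and $s\in D^{m}(T)$ for finite $m$ says that there is a chain in $T$ of length $m$ above $s$. So it is $\mathbf{\Sigma}^0_1$ as well. At limit stages we take intersections, and the step $D$ adds an existential quantifier. An induction on $\alpha$ then shows that $\{T: s\in D^{\omega\alpha+k}(T)\}$ is $\mathbf{\Pi}^0_{2\alpha}$ for $k=0$ and $\mathbf{\Sigma}^0_{2\alpha+1}$ for $k\ge 1$, uniformly in $s$. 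Each passage through $\omega$ more steps costs two quantifiers, since $D^{\omega(\alpha+1)}=\bigcap_m D^{\omega\alpha+m}$ and each $D^{\omega\alpha+m}$ is $\mathbf{\Sigma}^0_{2\alpha+1}$.

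Next, $|T|<\omega\alpha+k+1$ is equivalent to $D^{\omega\alpha+k+1}(T)=D^{\omega\alpha+k}(T)$. Indeed, once the derivative stabilizes it stays fixed, and conversely stabilization at stage $\omega\alpha+k$ gives $|T|\le\omega\alpha+k$. The equation says that for all $s$, if $s\in D^{\omega\alpha+k}(T)$ then $s\in D^{\omega\alpha+k+1}(T)$. This has the form $\forall s\,(\neg A_s\vee B_s)$, where $A_s$ is at most $\mathbf{\Sigma}^0_{2\alpha+1}$ and $B_s$ is $\mathbf{\Sigma}^0_{2\alpha+1}$. Each disjunct is therefore $\mathbf{\Delta}^0_{2\alpha+2}$, and the universal quantification gives $\mathbf{\Pi}^0_{2\alpha+2}$. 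The claimed computations of the complexity of $D^\beta$ need care at limit $\alpha$: we use $D^{\omega\lambda}=\bigcap_{\xi<\lambda}D^{\omega\xi}$ and choose a cofinal sequence, which gives $\mathbf{\Pi}^0_{\sup(2\xi+1)}=\mathbf{\Pi}^0_{2\lambda}$ with the usual ordinal conventions.

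\emph{Hardness.} It suffices to reduce a known $\mathbf{\Pi}^0_{2\alpha+2}$-complete set to $\lht_{<\omega\alpha+1}$, and then to handle $k>0$. For the latter, the map $T\mapsto$ the tree obtained by placing $T$ on top of a path of length $k$ (that is, $\{(0)^j: j\le k\}\cup\{(0)^k{}^\smallfrown t: t\in T\}$) should shift the Lusin height by exactly $k$. This needs checking, since $D$ removes leaves everywhere; alternatively, the base construction can be built directly with the parameter $k$.

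For the base case we build, by induction on $\alpha$, continuous maps $x\mapsto T_x$ from $2^\omega$ (or $2^{\omega\times\omega}$) into $\tr$. These witness Wadge reductions from the standard complete sets $P_{2\alpha+2}$ and $S_{2\alpha+1}$, realized by the iterated quantifier normal forms, to the dichotomies ``$|T_x|\le\omega\alpha$'' versus ``$|T_x|>\omega\alpha$'' and ``$|T_x|\le \omega\alpha+1$ or stable'' versus the opposite. The key gadgets are the following.

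(i) The ``$\exists$'' step: a countable union of trees glued at the root, possibly with distinct heights. Its height is the supremum, and the root survives long derivatives as long as some branch is tall.

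(ii) The ``$\forall\exists$'' step, giving two quantifiers per block of length $\omega$. We form a tree whose $\omega\alpha+1$-st derivative is stable exactly when infinitely many (or cofinitely many) component trees reach height $\omega\alpha$. To do this we attach to a node $s$ infinitely many children $s^\smallfrown(n)$ with subtrees of height $\omega\alpha+m_n$. Then $s$ persists to $D^{\omega(\alpha+1)}$ iff $\sup m_n=\infty$, which is a $\mathbf{\Pi}^0_2$-type condition relative to the previous level.

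Limit stages are handled by diagonal gluing of the reductions for a cofinal sequence.

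The main obstacle is the exact bookkeeping of Lusin heights in these gadgets. Because $D$ is the ``remove leaves'' derivative rather than a rank on well-founded trees, it is non-well-founded behavior that makes the height large. We therefore must ensure that the gadget trees stabilize precisely at the intended ordinal: for example, that an infinite branch is produced exactly when the $\mathbf{\Pi}^0_{2\alpha+2}$ condition fails, and that the trees have height at most $\omega\alpha$ otherwise. I would first try to make the gadgets out of well-founded trees of controlled rank, so that in the negative case a tree of rank exactly $\omega\alpha+1$ appears and the height jumps past $\omega\alpha+1$.
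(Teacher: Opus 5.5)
Your upper bound is correct and is the paper's argument (Lemmas~\ref{D-upper-bd} and \ref{lemma-upper-bounds}(iii)). The hardness half has a genuine gap: no reduction is actually constructed, and the design you commit to cannot work. You require that an infinite branch appear exactly when $x\notin A$, that the trees have height at most $\omega\alpha$ otherwise, and that the gadgets be built from well-founded trees. Then $A=f^{-1}(\wf_{\le\omega\alpha})$. Since $\wf_{\le\omega\alpha}=\{T:\forall s\neq\emptyset\ (s\notin D^{\omega\alpha}(T))\}$, your own computation shows this set is $\mathbf{\Pi}^0_{2\alpha+1}$, so $A$ could not be $\mathbf{\Pi}^0_{2\alpha+2}$-complete. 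The extra quantifier has to come from the opposite configuration. Points of $A$ must go to \emph{ill-founded} trees $T$ with $|T|\le\omega\alpha$, so that $D^{\omega\alpha}(T)$ is already pruned but not $\{\emptyset\}$; the complement may go to well-founded trees. Your gadget (ii) does not supply this. With well-founded subtrees of heights $\omega\alpha+m_n$ it only tests whether a node survives to $D^{\omega(\alpha+1)}$, which is a threshold at the next limit level. The reduction from general $k$ to $k=0$ also fails as stated. Putting $T$ on top of a path of length $k$ raises $|T|$ by $k$ only when $T$ is well-founded. If $T$ is ill-founded, the path nodes never become leaves and $|T|$ is unchanged, so an ill-founded negative instance of height $\omega\alpha+1$ would land in $\lht_{<\omega\alpha+k+1}$ for $k\ge 1$.

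The missing ingredient is an operation that implements the universal quantifier on ill-foundedness while controlling height. The paper uses the tree product $\bigotimes_nT_n$, which is ill-founded iff every $T_n$ is. It satisfies $|\bigotimes_nT_n|\le\sup_n|T_n|$ if all $T_n$ are ill-founded, and $|\bigotimes_nT_n|\le|T_{n_0}|+n_0$ if $T_{n_0}$ is well-founded (Lemma~\ref{lem-tech-tree-ops}). Combined with tree sums for the existential quantifier, this yields by induction on $\alpha$ continuous maps sending a $\mathbf{\Pi}^0_{2\alpha}$ set into $\ilf_{\le\omega\alpha}$ and its complement into $\wf_{<\omega\alpha}$ (Lemma~\ref{main-lem}). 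Finally, write $A=\bigcap_i\bigcup_jB_{i,j}$ with $B_{i,j}\in\mathbf{\Pi}^0_{2\alpha}$ (clopen when $\alpha=0$), and fix a well-founded tree $P$ of height exactly $\gamma=\omega\alpha+k$. The tree $\bigoplus_i\big(P\vee\bigoplus_jS_{B_{i,j}}(x)\big)$ then has height $\gamma$ if $x\in A$ and $\gamma+1$ if $x\notin A$ (Lemma~\ref{lemma-lower-bound}). Padding with $P$ handles every $k$ at once, which is what your path trick was meant to do but cannot.
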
 

An expanded version of this result can be found in Theorem~\ref{thm-complete}. 

A statement in \cite[Theorem 6.11]{Zafrany1989BorelIV} implies that $\lht_{<\omega\alpha+k+1}$ is $\mathbf{\Pi}^0_{2\alpha+1}$ for $\alpha\geq \omega$. This statement is incorrect and would have 
made our application impossible---having the index $2\alpha+2$ in Theorem~\ref{T:compl} makes our application in the proof of Theorem~\ref{thm-Beckers-relation-intr} work, while having the index $2\alpha+1$ would not have sufficed. 

Finally, as a consequence of Theorem~\ref{T:compl}, in Theorem~\ref{thm-derivative-operator-complexity} we determine the Borel class of the $\alpha$-th iterate $D^\alpha$ of the Lusin derivative $D$ viewed as a Borel function. Through this result we make a connection with the work of Cenzer and Mauldin \cite{Cenzer1}, \cite{Cenzer2}.

\subsection{Organization of the paper}

In Section~\ref{Section Borel Class of the Lusin Derivative}, we calculate precise Borel complexities of sets of trees of certain Lusin heights. The main results of this section are Theorems~\ref{thm-complete} and \ref{thm-derivative-operator-complexity}. The main difficulty is proving the lower bounds on complexity which is done in Subsection~\ref{subsection-lower-bd}. We introduce some operations on trees called sums, products, and joins, in order to construct the desired Wadge reductions (Lemma~\ref{lemma-lower-bound}). 

In Section~\ref{Section Application to idealistic equivalence relations}, we provide some background on the theory of countable abelian $p$-groups. We define a new version of Becker's equivalence relation (which is equivalent to the original version for our purposes, see Proposition~\ref{prop-alt-becker}) and we prove that it is analytic, idealistic, and has Borel classes, yet it is not classwise Borel isomorphic to an orbit equivalence relation. Two of the three main lemmas (Lemma~\ref{lemma-Becker-Kechris} and Lemma~\ref{lemma_E0_does_not_reduce}) are proved using techniques similar to the ones used in \cite{BeckersNotes} and \cite{calderoni2025structuralresultsidealisticequivalence}. The third main lemma is proved in Subsection~\ref{subsection-tr-gp} (Lemma~\ref{becker-lem}) with methods based on the results from Section~\ref{Section Borel Class of the Lusin Derivative}. This is the point in which we avoid using analytic determinacy. In order to transfer the results of Section~\ref{Section Borel Class of the Lusin Derivative} to the setting of abelian $p$-groups, we study 
an assignment of abelian $p$-groups to trees in Subsection~\ref{subsection-tr-gp}.

\section{Borel Class of the Lusin Derivative}\label{Section Borel Class of the Lusin Derivative}

\begin{defn}\label{defn-ld}
    Let $T$ be a tree on $\omega$. Recall from \eqref{derivative_definition} the definition  of the Lusin derivative of $T$. For every ordinal $\alpha$, $D^\alpha$ is defined by induction on $\alpha$. We let 
    \begin{align*}
        D^0(T)&=T,\\
        D^{\alpha+1}(T)&=D(D^\alpha(T)),\\
        D^\alpha(T)&=\bigcap_{\xi<\alpha}D^\xi(T) \text{ if $\alpha$ is a limit ordinal.} 
    \end{align*}
    The \textbf{Lusin height of $T$} is the least $\alpha<\omega_1$ for which $D^{\alpha+1}(T)=D^\alpha(T)$. We denote it by $|T|$.
\end{defn}

\begin{rem}\label{remark_Lusin_derivative}
    Proposition~\ref{prop-ht} shows that if $T$ is well-founded, then $|T|$ coincides with the length of a tree defined in \cite[Section 2D]{Moschovakis}. We note that the definition of the Lusin derivative of a tree given in \eqref{derivative_definition} differs in an inessential way from the definition of the Lusin derivative used by Zafrany in \cite{Zafrany1989BorelIV} (where $\{\emptyset\}$ is omitted from the formula for $D(T)$). We use formula \eqref{derivative_definition} because we insist that a tree contain a root and that the derivative of a tree be itself a tree. Moreover, the notion of the Lusin height of a tree induced by \eqref{derivative_definition} aligns well with the notion of the Ulm length of a countable abelian $p$-group. This correspondence is central to the results in Section~\ref{Section Application to idealistic equivalence relations}. However, the main results of Section~\ref{Section Borel Class of the Lusin Derivative} (Theorems~\ref{thm-complete} and \ref{thm-derivative-operator-complexity}) remain true for Zafrany's definition of the Lusin derivative (see Remark~\ref{rem-same}).
\end{rem}

For a tree $T\in \tr$ and $s\in T$, we define a tree
\[
T_s = \{t\in \omega^{<\omega}: s^\smallfrown t \in T\}.
\]

\begin{defn}
    Given $T\in \tr$ we define the rank function $\rho_T$ by
    \[
    \rho_T(s)=\sup\{\rho_T(t)+1:t\in T \land t\supsetneq s\}
    \]
    for $s\in T$ such that $s\neq \emptyset$ and $T_s$ is well-founded, and we set $\rho_T(s)=\infty$ for all other $s\in T$. We set $\alpha<\infty$ for any ordinal $\alpha$.
\end{defn}

The following is a standard fact. We supply a proof here just to certify that it also holds for our definition \eqref{derivative_definition}, see Remark~\ref{remark_Lusin_derivative}.
\begin{prop}\label{prop-ht} 
    For $T\in \tr$, we have
    $|T|=\sup \{\rho_T(s)+1:s\in T \land \rho_T(s)\neq \infty\}$.
\end{prop}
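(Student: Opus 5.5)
The plan is to reduce the proposition to a pointwise description of the iterated derivatives in terms of the rank function. Concretely, I would first prove the following claim by induction on $\alpha$: for every $s\in T$ with $s\neq\emptyset$ and every ordinal $\alpha$,
\[
s\in D^\alpha(T)\iff \rho_T(s)\geq\alpha ,
\]
where $\infty\geq\alpha$ for all ordinals $\alpha$. The root $\emptyset$ lies in every $D^\alpha(T)$ by \eqref{derivative_definition}, so it is excluded from the claim.

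\textbf{The claim.}
\begin{itemize}
\item The case $\alpha=0$ is trivial.
\item The limit case follows immediately from the definition of $D^\alpha$ as an intersection.
\item For the successor step, $s\in D^{\alpha+1}(T)$ iff some immediate successor $s^\smallfrown(n)$ lies in $D^\alpha(T)$. By induction this holds iff some $s^\smallfrown(n)\in T$ has $\rho_T(s^\smallfrown(n))\geq\alpha$.
\end{itemize}
To finish the successor step, I need two facts. First, if $T_s$ is well-founded, then $\rho_T(s)=\sup_n(\rho_T(s^\smallfrown(n))+1)$. This holds because any proper extension $t\supsetneq s$ passes through some $s^\smallfrown(n)$, and then $\rho_T(t)\leq\rho_T(s^\smallfrown(n))$. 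Second, if $T_s$ is ill-founded, then some $T_{s^\smallfrown(n)}$ is ill-founded. This gives the equivalence with $\rho_T(s)\geq\alpha+1$ in both cases.

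\textbf{Consequences.} Let $\beta=\sup\{\rho_T(s)+1: s\in T,\ \rho_T(s)\neq\infty\}$. I would show $D^{\alpha+1}(T)\neq D^\alpha(T)$ for all $\alpha<\beta$, and $D^{\beta+1}(T)=D^\beta(T)$; together these give $|T|=\beta$.

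For $\alpha<\beta$, pick $s$ with $\rho_T(s)\geq\alpha$ finite-valued, i.e.\ an ordinal. I would use the standard fact that below such a node every smaller rank is attained. Descend from $s$: if $\rho_T(t)>\alpha$, then since $\rho_T(t)$ is a sup of $\rho_T(t^\smallfrown(n))+1$, some child has rank $\geq\alpha$. By well-foundedness of $T_s$ this descent must stop at a node $t$ with $\rho_T(t)=\alpha$ exactly. By the claim, $t\in D^\alpha(T)\setminus D^{\alpha+1}(T)$.

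For $\alpha=\beta$, any nonroot $s\in D^\beta(T)$ has $\rho_T(s)\geq\beta$. All ordinal ranks are $<\beta$, so $\rho_T(s)=\infty$. Hence $T_s$ is ill-founded, so some child also has rank $\infty$, and $s\in D^{\beta+1}(T)$. The root is in both trees.

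\textbf{Main obstacle.} Nothing here is deep. The only care needed is in the successor step of the claim: reconciling the definition of $\rho_T$ as a sup over all proper extensions with the derivative, which only looks at immediate successors. I also need to handle the value $\infty$ and the special treatment of the root consistently with the modified derivative \eqref{derivative_definition}.
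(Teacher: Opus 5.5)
Your proof is correct and takes essentially the same route as the paper: the inductive characterization $s\in D^\alpha(T)\iff\rho_T(s)\geq\alpha$, then a check that $D^{\beta+1}(T)=D^\beta(T)$ and that $D^{\alpha+1}(T)\neq D^\alpha(T)$ for $\alpha<\beta$. The only local difference is in the last step: you descend to a node of rank exactly $\alpha$, whereas the paper takes a node $s$ with $\alpha\leq\rho_T(s)<\beta$ and uses that the sequence $D^\gamma(T)$, once stationary, stays stationary.
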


\begin{proof}
    By a straightforward induction on $\alpha$ for all $s\in T$ we have 
    \begin{equation}\label{ht}
        s\in D^\alpha(T)\iff \rho_T(s)\geq \alpha.
    \end{equation}
    Set $\beta=\sup \{\rho_T(s)+1:s\in T \land \rho_T(s)\neq \infty\}$. Then we have  
    \[
    \forall s\in T\ ( \rho_T(s)< \beta\lor \rho_T(s)=\infty)
    \] 
    and by \eqref{ht} 
    \[
    \forall s\in D^{\beta}(T)\ (\rho_T(s)=\infty),
    \] 
    which then implies that for all $s\in D^{\beta}(T)$, either $s=\emptyset$ or $s$ extends to an infinite branch of $T$, hence $s\in D^{\beta+1}(T)$. The inclusion $D^{\beta+1}(T)\subseteq D^{\beta}(T)$ follows from the definition of $D$, so we have $|T|\leq \beta$. 

    Now take $\alpha<\beta$. Then there is $s\in T$ such that $\alpha < \rho_T(s)+1 \leq \beta$. Then by \eqref{ht} $s\in D^{\rho_T(s)}(T)\setminus D^{\rho_T(s)+1}(T)$, so $D^{\rho_T(s)}(T)\neq D^{\rho_T(s)+1}(T)$, which implies $D^{\alpha}(T)\neq D^{\alpha+1}(T)$, and hence $|T|\neq \alpha$. Thus $|T|=\beta$, as desired.
\end{proof}
 
By $\wf$ we denote the set of well-founded trees and by $\ilf$ we denote the set of ill-founded trees on $\omega$. For $\alpha<\omega_1$, by 
\[
\lht_{=\alpha}, \ilf_{=\alpha}, \wf_{=\alpha}
\]
we denote the sets of trees, ill-founded trees and well-founded trees on $\omega$ of Lusin height $\alpha$. Analogously we define $\lht_{<\alpha}, \ilf_{<\alpha}, \wf_{<\alpha}$, $\lht_{\leq\alpha}, \ilf_{\leq\alpha}, \wf_{\leq\alpha}$. We calculate the exact Borel complexities of these sets for various values of $\alpha$ using the notion of Wadge reducibility. 

Let $A$, $B$ be subsets of Polish spaces $X$ and $Y$ respectively and $1\leq \xi<\omega_1$. We write $A\in \mathbf{\Pi}^0_\xi(X)$ to indicate that $A$ is a $\mathbf{\Pi}^0_\xi$ subset of $X$. We often do not mention the space $X$ if it is clear from context.  We say that $A$ is \textbf{Wadge reducible to} $B$ if there is a continuous map $f:X\to Y$ such that $f^{-1}(B)=A$. If $B$ is $\mathbf{\Pi}^0_\xi$ and $A$ is Wadge reducible to $B$, then $A$ is also $\mathbf{\Pi}^0_\xi$. A set $B$ is said to be $\mathbf{\Pi}^0_\xi$-hard if $A$ is Wadge reducible to $B$ for any $A\in \mathbf{\Pi}^0_\xi(X)$ where $X$ is a zero-dimensional Polish space. We say that $B$ is $\mathbf{\Pi}^0_\xi$-complete if $B$ is in  $\mathbf{\Pi}^0_\xi$ and is $\mathbf{\Pi}^0_\xi$-hard. By a theorem of Wadge, if $X$ is a zero-dimensional Polish space, a set $A\in \mathbf{\Pi}^0_\xi(X)$ is $\mathbf{\Pi}^0_\xi$-complete if and only if $A\in \mathbf{\Pi}^0_\xi\setminus \mathbf{\Sigma}^0_\xi$ \cite[Theorem 22.10]{kechris1995classical}. Additionally, all of the above holds true when $\mathbf{\Pi}^0_\xi$ is interchanged with $\mathbf{\Sigma}^0_\xi$.

\begin{thm}\label{thm-complete} 
    For $\alpha<\omega_1$ and $k\in \omega$,
    \begin{itemize}
        \item[(i)] $\lht_{<\omega\alpha+k+1}$ is $\mathbf{\Pi}_{2\alpha+2}^0$-complete;
        \item[(ii)]$\lht_{=\omega\alpha}$ is $\mathbf{\Pi}_{2\alpha+2}^0$-complete;
        \item[(iii)] $\lht_{=\omega\alpha+k+1}$ is $\mathbf \Delta_{2\alpha+3}^0$, but it is neither $\mathbf{\Sigma}_{2\alpha+2}^0$ nor $\mathbf{\Pi}_{2\alpha+2}^0$. In fact, it is a difference of two $\mathbf{\Pi}^0_{2\alpha+2}$ sets.
    \end{itemize}
\end{thm}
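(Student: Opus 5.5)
Upper bounds first, then lower bounds via Wadge reductions, then deduce (ii) and (iii) from (i).

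\emph{Upper bounds.} By Proposition~\ref{prop-ht} and \eqref{ht}, $|T|<\beta$ (for successor $\beta=\gamma+1$) holds iff $D^{\gamma+1}(T)=D^{\gamma}(T)$, i.e. every $s\in D^\gamma(T)$ has some immediate extension in $D^\gamma(T)$ (or $s=\emptyset$). So I would compute, by induction on $\gamma$, the Borel class of the relations $s\in D^\gamma(T)$.

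\begin{itemize}
\item[\emph{Finite steps.}] $s\in D(T)$ is $\mathbf{\Sigma}^0_1$, and iterating $D$ finitely many times keeps it $\mathbf{\Sigma}^0_1$ (an existential quantifier over finite extensions).
\item[\emph{Limit steps.}] $s\in D^{\omega\alpha}(T)$ is an intersection over $\xi<\omega\alpha$. Combining the two steps I expect: $s\in D^{\omega\alpha+k}(T)$ is $\mathbf{\Pi}^0_{2\alpha}$-then-$\Sigma$, i.e. in $\mathbf{\Sigma}^0_{2\alpha+1}$ for $k\geq1$. More precisely it is of the form $\exists t\supseteq s$ with $|t|-|s|=k$ and $t\in D^{\omega\alpha}(T)$, where the latter is $\mathbf{\Pi}^0_{2\alpha}$ for $\alpha\geq1$.
\item[\emph{The condition.}] ``For all $s$: $s\notin D^{\omega\alpha+k}(T)$ or $s\in D^{\omega\alpha+k+1}(T)$'' is a countable intersection of sets that are $\mathbf{\Pi}^0_{2\alpha+1}\cup\mathbf{\Sigma}^0_{2\alpha+1}$, hence in $\mathbf{\Pi}^0_{2\alpha+2}$.
\end{itemize}

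One subtlety is that ``$D^{\gamma+1}=D^\gamma$'' only characterizes $|T|\le\gamma$. For (i) with $\beta=\omega\alpha+k+1$ we need $|T|\le\omega\alpha+k$, which is exactly this condition with $\gamma=\omega\alpha+k$. For (ii), $\lht_{=\omega\alpha}=\lht_{<\omega\alpha+1}\setminus\bigcup_{\xi<\omega\alpha}\lht_{<\xi+1}$, and I need the union over $\xi<\omega\alpha$ to be $\mathbf{\Sigma}^0_{2\alpha+2}$ at worst. Actually $|T|\ge\omega\alpha$ iff every $D^\xi(T)$, $\xi<\omega\alpha$, contains a non-root node of non-infinite rank. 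I would handle this directly: $|T|\ge\omega\alpha$ iff for each $\xi<\omega\alpha$ there is $s\in D^\xi(T)\setminus D^{\xi+1}(T)$. That is a countable intersection of $\mathbf{\Sigma}^0_{2\alpha+1}$-type sets, hence $\mathbf{\Pi}^0_{2\alpha+2}$. For (iii), $\lht_{=\omega\alpha+k+1}=\lht_{<\omega\alpha+k+2}\setminus\lht_{<\omega\alpha+k+1}$ is a difference of two $\mathbf{\Pi}^0_{2\alpha+2}$ sets, hence in $\mathbf{\Delta}^0_{2\alpha+3}$.

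\emph{Lower bounds.} This is the main obstacle. I would prove, by induction on $\alpha$, that a fixed $\mathbf{\Pi}^0_{2\alpha+2}$-complete set, say $P_{2\alpha+2}\subseteq 2^\omega$, Wadge reduces to $\lht_{<\omega\alpha+1}$. The reduction should map $P$ into $\lht_{=\omega\alpha}$ (or below) and its complement into trees of height exactly $\omega\alpha+k'$ for all large $k'$, or ill-founded ones with large height.

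To build this I would introduce tree operations:
\begin{itemize}
\item \emph{sums} (placing trees $T_n$ above the root at nodes $(n)$), which give heights $\sup(|T_n|+1)$-like behaviour;
\item \emph{products}, which add ordinals;
\item \emph{joins}, which take ``infinite branches appear iff infinitely many'', turning $\Sigma$ into $\Pi$ two levels up.
\end{itemize}
A $\mathbf{\Pi}^0_{2\alpha+2}$ set is $\bigcap_m\bigcup_n$ of $\mathbf{\Pi}^0_{2\alpha}$ sets. The inductive hypothesis gives continuous maps $x\mapsto T_{m,n}(x)$ with height $<\omega\beta$ or not. Combining these with sums and joins, the total height is $\omega\alpha$ exactly when the $\Pi\Sigma$ condition holds, and is pushed beyond $\omega\alpha+k$ otherwise. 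The delicate part is controlling the finite tail $k$. Continuity is automatic, since each node's membership depends on finitely many coordinates. The same reduction gives hardness for (ii).

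\emph{Finishing.} Shifting the construction by a product with a chain of length $k$ moves the target from $\omega\alpha$ to $\omega\alpha+k$, giving (i) for all $k$. For (iii), non-$\mathbf{\Pi}^0_{2\alpha+2}$ follows by reducing $\lht_{\ge\omega\alpha+k+1}$-type complements. Non-$\mathbf{\Sigma}^0_{2\alpha+2}$ follows because intersecting with $\lht_{<\omega\alpha+k+2}$ the reduction image (arranged to land in height $\le\omega\alpha+k+1$) gives $\mathbf{\Pi}^0_{2\alpha+2}$-hardness. Wadge's theorem \cite[Theorem 22.10]{kechris1995classical} then converts hardness plus membership into completeness.
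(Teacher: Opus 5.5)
Your upper-bound half is correct and is the paper's argument (Lemmas~\ref{D-upper-bd} and \ref{lemma-upper-bounds}); your direct handling of $|T|\ge\omega\alpha$ is a harmless variant. The genuine gap is the lower bound. This is the real content of the theorem, and your proposal does not supply it.

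Your inductive hypothesis records only heights, and only at the levels $2\alpha+2$. It does not apply to the $\mathbf{\Pi}^0_{2\alpha}$ pieces $B_{m,n}$ when $\alpha$ is a limit. Even at successors, the dichotomy it provides (height $\le\omega\alpha'$ or not) says nothing about well-foundedness, and you never say how sums and joins convert it into the next level. The missing idea is to carry ill-foundedness in the invariant, as in Lemma~\ref{main-lem}. For $C\in\mathbf{\Pi}^0_{2\alpha}$ one builds $S_C$ with $S_C(x)\in\ilf_{\le\omega\alpha}$ for $x\in C$ and $S_C(x)\in\wf_{<\omega\alpha}$ for $x\notin C$. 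Then $D^{\omega\alpha}(S_C(x))$ is a nonempty pruned tree in the first case and $\{\emptyset\}$ in the second. To propagate this, the product must act as an intersection on ill-foundedness rather than ``add ordinals''. Concretely, $\bigotimes_n T_n$ is ill-founded iff every $T_n$ is, and in that case its height is at most $\sup_n|T_n|$. A single well-founded factor caps the height at $|T_{n_0}|+n_0$ (Lemma~\ref{lem-tech-tree-ops}(ii)). With this, $\bigotimes_i\bigoplus_j$ does the successor step and a single $\bigotimes_n$ does the limit step. Your ``products'' and ``joins'' are never defined, and as described they do not do this job.

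Second, (ii), (iii) and the finite tail $k$ need exact heights, which your sketch never secures. For (ii), and for $\mathbf{\Pi}^0_{2\alpha+2}$-hardness of $\lht_{=\omega\alpha+k+1}$, the points of $A$ must go to height exactly the target; an upper bound is not enough. For $\mathbf{\Sigma}^0_{2\alpha+2}$-hardness, the complement of $A$ must go to height exactly $\omega\alpha+k+1$, not merely ``beyond $\omega\alpha+k$''.

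Shifting by a chain of length $k$ does not fix this in general. Attaching a tree at the end of a chain of length $k$ adds $k$ to its height only if the tree is well-founded; the height of an ill-founded tree is unchanged.

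The paper settles all of this at once in Lemma~\ref{lemma-lower-bound}. Take $P\in\wf_{=\gamma}$ for an arbitrary $\gamma\ge\omega\alpha$. The map $T_A(x)=\bigoplus_i\bigl(P\vee\bigoplus_j S_{i,j}(x)\bigr)$ satisfies $|T_A(x)|=\gamma$ for $x\in A$ and $|T_A(x)|=\gamma+1$ for $x\notin A$. Taking $\gamma=\omega\alpha+k$ and $\gamma=\omega\alpha+k+1$ gives (i), (ii) and both halves of (iii).
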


Let $X, Y$ be Polish spaces and $\alpha<\omega_1$. We say that a function $f:X\to Y$ is \textbf{$\mathbf{\Sigma}^0_\alpha$-measurable} if $f^{-1}(U)\in \mathbf{\Sigma}^0_\alpha(X)$ for every open $U\subseteq Y$.  

\begin{thm}\label{thm-derivative-operator-complexity}
    For $\alpha<\omega_1$ and $k\in \omega$, $D^{\omega\alpha+k}$ is $\mathbf{\Sigma}^0_{2\alpha+1}$-measurable and not $\mathbf{\Sigma}^0_{2\alpha}$-measurable. 
\end{thm}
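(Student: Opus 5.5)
Since the topology on $\tr$ is the product topology, $D^{\beta}$ is $\mathbf{\Sigma}^0_\xi$-measurable exactly when each set $\{T: s\in D^\beta(T)\}$ is $\mathbf{\Delta}^0_\xi$. Indeed, subbasic open sets of $\tr$ are $\{S: s\in S\}$ and $\{S: s\notin S\}$, and open sets are countable unions of finite intersections of these. By \eqref{ht} in the proof of Proposition~\ref{prop-ht}, $s\in D^\beta(T)$ iff $\rho_T(s)\geq\beta$ (with the root always present). Equivalently, $s\notin D^\beta(T)$ iff $s\notin T$ or $T_s$ is well-founded of Lusin height small relative to $\beta$. More precisely, when $s\in T$ and $s\neq\emptyset$, the rank $\rho_T(s)$ is $|T_s|-1$ in the sense that $\rho_T(s)<\beta$ iff $|T_s|\leq\beta$ and $T_s$ is well-founded. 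I will check this small bookkeeping against Proposition~\ref{prop-ht} applied to $T_s$. The map $T\mapsto T_s$ is continuous, so everything reduces to the complexity of sets of the form $\wf_{\le\beta}$ and $\lht_{<\gamma}$.

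\emph{Upper bound.} Write $\beta=\omega\alpha+k$. I need $\{T:\rho_T(s)\ge\beta\}$ to be $\mathbf{\Delta}^0_{2\alpha+1}$. I would not extract this directly from Theorem~\ref{thm-complete}, which gives $\mathbf{\Pi}^0_{2\alpha+2}$ bounds, too weak by one. Instead I would prove it by induction on $\beta$ directly.

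\begin{itemize}
\item Successor step: $\rho_T(s)\ge\beta+1$ iff $\exists n\,(\rho_T(s^\smallfrown n)\ge\beta)$. This is a countable union of $\mathbf{\Delta}^0_{2\alpha+1}$ sets, hence $\mathbf{\Sigma}^0_{2\alpha+1}$.
\item Finite steps: iterating $k$ times within level $\alpha$ seems to push beyond $\mathbf{\Delta}$. So the precise inductive claim must be that $\{T:\rho_T(s)\ge\omega\alpha+k\}$ is $\mathbf{\Sigma}^0_{2\alpha+1}$ for $k\geq1$, while its complement is also $\mathbf{\Sigma}^0_{2\alpha+1}$. The latter holds because $\rho_T(s)<\omega\alpha+k$ iff every child has rank $<\omega\alpha+k-1$. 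That condition is a countable intersection, hence only $\mathbf{\Pi}^0_{2\alpha+1}$.
\item Correction: the right statement is that $D^{\omega\alpha+k}$ is $\mathbf{\Sigma}^0_{2\alpha+1}$-measurable, meaning the preimages of subbasic sets are $\mathbf{\Sigma}^0_{2\alpha+1}$, not necessarily $\mathbf{\Delta}$. Finite intersections of $\mathbf{\Sigma}^0_{2\alpha+1}$ sets, and countable unions of them, stay $\mathbf{\Sigma}^0_{2\alpha+1}$.
\end{itemize}

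So the claim to prove by induction is that both $\{T:\rho_T(s)\ge\omega\alpha+k\}$ and its complement lie in $\mathbf{\Sigma}^0_{2\alpha+1}$, i.e. in $\mathbf{\Delta}^0_{2\alpha+1}$. For the finite steps, the natural tool is the fact that $\mathbf{\Delta}^0_{2\alpha+1}$ is closed under the operation $A\mapsto\bigcup_n A_n$ only up to $\mathbf{\Sigma}$. I would therefore instead argue that for $\alpha\ge1$, $\rho_T(s)\ge\omega\alpha+k$ iff there exists a finite subtree of $T_s$ of height $k$ all of whose leaves $t$ satisfy $\rho_T(s^\smallfrown t)\ge\omega\alpha$. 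This is a countable union of finite intersections of the level-$\omega\alpha$ sets. The level-$\omega\alpha$ sets are intersections $\bigcap_{\xi<\omega\alpha}$ of lower sets, which are $\mathbf{\Delta}^0_{2\xi'+1}$ for $\xi'<\alpha$. Hence they are $\mathbf{\Pi}^0_{2\alpha}$, and so the finite-height sets are $\mathbf{\Sigma}^0_{2\alpha+1}$. Their complements are $\mathbf{\Pi}^0_{2\alpha+1}$, which suffices for subbasic sets of the form $\{s\in S\}$. For the form $\{s\notin S\}$ I need the complement to be $\mathbf{\Sigma}^0_{2\alpha+1}$ as well. It is: $\rho<\omega\alpha+k$ iff every height-$k$ path hits a node of rank $<\omega\alpha$. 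Each such condition is $\mathbf{\Sigma}^0_{2\alpha}$, and the quantification over paths is a universal countable intersection, which is $\mathbf{\Pi}^0_{2\alpha+1}$. This is the main obstacle. If the direct route fails, I would fall back on using the $\mathbf{\Pi}^0_{2\alpha+2}$-completeness of Theorem~\ref{thm-complete}(i) together with the identity $\lht_{<\beta+1}=\bigcap_s\{T:\rho_T(s)<\beta\text{ or }\rho_T(s)=\infty\}$ to check consistency of the indices.

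\emph{Lower bound.} Suppose $D^{\omega\alpha+k}$ were $\mathbf{\Sigma}^0_{2\alpha}$-measurable. Then $D^{\omega\alpha+k+1}=D\circ D^{\omega\alpha+k}$ would be only slightly more complex, and $\lht_{<\omega\alpha+k+1}$ can be expressed through it. Specifically, $\lht_{<\omega\alpha+k+1}$ is the set of $T$ with $D^{\omega\alpha+k}(T)=D^{\omega\alpha+k+1}(T)$. That equality says: for every $s$, if $s\in D^{\omega\alpha+k}(T)$ then some child of $s$ lies in $D^{\omega\alpha+k}(T)$. Under the assumption this is a $\forall\exists$ over $\mathbf{\Delta}^0_{2\alpha}$-type conditions, hence $\mathbf{\Pi}^0_{2\alpha+1}$. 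That contradicts Theorem~\ref{thm-complete}(i), which gives $\mathbf{\Pi}^0_{2\alpha+2}$-completeness, since a complete set cannot lie in the lower class.
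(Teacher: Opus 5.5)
Your lower bound is correct and is the paper's argument. If $D^{\omega\alpha+k}$ were $\mathbf{\Sigma}^0_{2\alpha}$-measurable, then $\lht_{<\omega\alpha+k+1}$ would be $\mathbf{\Pi}^0_{2\alpha+1}$, since it is the preimage of the $\mathbf{\Pi}^0_2$ set of trees $S$ with $D(S)=S$. That contradicts Theorem~\ref{thm-complete}(i).

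The upper bound, however, is not proved. You note at the outset that both $\{S: s\in S\}$ and $\{S: s\notin S\}$ are open in $\tr$, so you need $\{T: s\in D^{\omega\alpha+k}(T)\}\in\mathbf{\Delta}^0_{2\alpha+1}$. That is right, and it means the ``Correction'' bullet, which asks only for $\mathbf{\Sigma}^0_{2\alpha+1}$ preimages of the sets $\{S:s\in S\}$, is a step backwards. You then establish $\{T: s\in D^{\omega\alpha+k}(T)\}\in\mathbf{\Sigma}^0_{2\alpha+1}$, which is Lemma~\ref{D-upper-bd}(ii). Two small remarks: a single $t\in\omega^k$ with $\rho_T(s^\smallfrown t)\geq\omega\alpha$ suffices, and $\rho_T(s)=|T_s|$ (not $|T_s|-1$) when $s\neq\emptyset$ and $T_s$ is well-founded. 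For the complement you only get $\mathbf{\Pi}^0_{2\alpha+1}$, and the fallback to Theorem~\ref{thm-complete} is not an argument.

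This obstacle cannot be overcome when $k\geq 1$, because the upper bound is then false.
\begin{itemize}
\item For $\alpha=0$, $k=1$: the trees $T_m=\{\emptyset,(0),(0,m)\}$ converge to $\{\emptyset,(0)\}$. But $D(T_m)=\{\emptyset,(0)\}$ for every $m$, while $D(\{\emptyset,(0)\})=\{\emptyset\}$. So $D$ is not continuous, i.e.\ not $\mathbf{\Sigma}^0_1$-measurable.
\item For $\alpha\geq 1$: write any $A\in\mathbf{\Sigma}^0_{2\alpha+1}(2^\omega)$ as $\bigcup_n B_n$ with $B_n\in\mathbf{\Pi}^0_{2\alpha}$. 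Let $u_n=(0,n)^\smallfrown(0,\dots,0)$ have length $k+1$. Let $T_x$ consist of the initial segments of the $u_n$ together with the nodes $u_n^\smallfrown t$ for $t\in S_{B_n}(x)$, where $S_{B_n}$ comes from Lemma~\ref{main-lem}. Then $x\mapsto T_x$ is continuous. By \eqref{ht} and the formula opening the proof of Lemma~\ref{D-upper-bd}, $(0)\in D^{\omega\alpha+k}(T_x)$ iff some $S_{B_n}(x)$ is ill-founded iff $x\in A$.
\end{itemize}
So the preimage of the clopen set $\{S:(0)\notin S\}$ under $D^{\omega\alpha+k}$ admits a continuous reduction from every $\mathbf{\Pi}^0_{2\alpha+1}$ subset of $2^\omega$. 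Hence it is not $\mathbf{\Sigma}^0_{2\alpha+1}$.

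The paper's one-line proof of the upper bound just cites Lemma~\ref{D-upper-bd}(ii), which controls only preimages of the sets $\{S:s\in S\}$. It passes over exactly the point where you got stuck.

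What does hold is the following:
\begin{itemize}
\item $D^{\omega\alpha}$ is $\mathbf{\Sigma}^0_{2\alpha+1}$-measurable, since Lemma~\ref{D-upper-bd}(i) gives preimages in $\mathbf{\Pi}^0_{2\alpha}\subseteq\mathbf{\Delta}^0_{2\alpha+1}$.
\item For $k\geq 1$, the map $D^{\omega\alpha+k}$ is $\mathbf{\Sigma}^0_{2\alpha+2}$-measurable but not $\mathbf{\Sigma}^0_{2\alpha+1}$-measurable.
\end{itemize}
Your lower-bound argument is unaffected.
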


\begin{rem}
    Theorem~\ref{thm-derivative-operator-complexity} is analogous to the main result of \cite{Cenzer2}, where a notion similar to the Lusin derivative is introduced. Authors of \cite{Cenzer2} define the derived set operator on the space of closed subsets of an uncountable, compact metric space endowed with the Vietoris topology and prove that the $\alpha$-th iterate of this operator is $\mathbf{\Sigma}^0_{2\alpha+1}$-measurable and not $\mathbf{\Sigma}^0_{2\alpha}$-measurable. We note that in this remark as well as in the whole paper we adopt the standard notation of $\mathbf{\Sigma}^0_\alpha$ and $\mathbf{\Pi}^0_\alpha$ sets (used e.g. in \cite{kechris1995classical}), whereas in \cite{Cenzer2} a different definition was employed. 
\end{rem}

\subsection{Upper Bounds on Complexity}\label{subsection-upper-bd}

In this subsection, we calculate the upper bounds on the complexity of the sets in Theorem~\ref{thm-complete}. We start with an auxiliary lemma.

\begin{lemma}\label{D-upper-bd}
    For any $s\in \omega^{<\omega}$, $k\in \omega$, and $0<\alpha<\omega_1$,
    \begin{itemize}
        \item[(i)]$\{T\in \tr :s\in D^{\omega\alpha}(T)\}$ is $\mathbf{\Pi}^0_{2\alpha}$,
        \item[(ii)] $\{T\in \tr: s\in D^{\omega \alpha+k}(T)\}$ is $ \mathbf{\Sigma}_{2\alpha+1}^0$.
    \end{itemize}
\end{lemma}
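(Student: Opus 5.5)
We prove (i) and (ii) together by induction on $\alpha$. The basic tool is the unfolding of one derivative step: for any ordinal $\beta$ and $s\neq\emptyset$,
\[
s\in D^{\beta+1}(T)\iff \exists n\ \bigl(s^\smallfrown(n)\in D^\beta(T)\bigr),
\]
and for $s=\emptyset$ the condition holds trivially. Iterating this $k$ times,
\[
s\in D^{\beta+k}(T)\iff \exists t\in\omega^{k}\ \bigl(s^\smallfrown t\in D^\beta(T)\bigr).
\]
The right-hand side is a countable union. (For $s=\emptyset$ the set is all of $\tr$, which is trivially in every class.) We also use the limit clause: for limit $\lambda$, $s\in D^\lambda(T)$ iff $s\in D^\xi(T)$ for all $\xi<\lambda$, which is a countable intersection.

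\textbf{Base case $\alpha=1$.} For finite $m$, membership $s\in D^m(T)$ says $\exists t\in\omega^m\,(s^\smallfrown t\in T)$. This is a countable union of clopen sets, hence open, i.e.\ $\mathbf{\Sigma}^0_1$. Intersecting over $m$ gives that $\{T: s\in D^\omega(T)\}$ is $\mathbf{\Pi}^0_2$, which is (i) for $\alpha=1$. Then (ii) follows from the displayed unfolding: a countable union of $\mathbf{\Pi}^0_2$ sets is $\mathbf{\Sigma}^0_3$.

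\textbf{Successor step $\alpha=\gamma+1$.} We have $\omega\alpha=\sup_k(\omega\gamma+k)$, so
\[
\{T:s\in D^{\omega\alpha}(T)\}=\bigcap_k\{T:s\in D^{\omega\gamma+k}(T)\}.
\]
By (ii) for $\gamma$, each set in this intersection is $\mathbf{\Sigma}^0_{2\gamma+1}$, so the intersection is $\mathbf{\Pi}^0_{2\gamma+2}=\mathbf{\Pi}^0_{2\alpha}$. When $\gamma=0$ this is exactly the base case computation. Part (ii) for $\alpha$ then follows from (i) for $\alpha$ by the unfolding: a countable union of $\mathbf{\Pi}^0_{2\alpha}$ sets lies in $\mathbf{\Sigma}^0_{2\alpha+1}$.

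\textbf{Limit step $\alpha$ a limit.} Here $\omega\alpha=\sup_{\gamma<\alpha}\omega\gamma$, so
\[
\{T:s\in D^{\omega\alpha}(T)\}=\bigcap_{\gamma<\alpha}\{T:s\in D^{\omega\gamma}(T)\}.
\]
By the induction hypothesis each set in this intersection is $\mathbf{\Pi}^0_{2\gamma}\subseteq\mathbf{\Pi}^0_{2\alpha}$, because $2\gamma<\alpha=2\alpha$ for limit $\alpha$. A countable intersection of such sets stays in $\mathbf{\Pi}^0_{2\alpha}$. Part (ii) then follows as in the successor step.

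The only delicate point is the ordinal arithmetic: for limit $\alpha$ we have $2\alpha=\alpha$, and the classes $\mathbf{\Pi}^0_{2\gamma}$ for $\gamma<\alpha$ are all contained in $\mathbf{\Pi}^0_\alpha$, which is closed under countable intersections. Care is also needed with the convention that $D^\beta$ always contains $\emptyset$, but this is harmless.
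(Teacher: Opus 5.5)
Your proof is correct and matches the paper's argument step for step: (ii) follows from (i) by unfolding $k$ derivative steps into a countable union, the base case is a $G_\delta$ computation, and the successor and limit steps write $D^{\omega\alpha}$ as a countable intersection of earlier iterates. The only cosmetic difference is in the limit step, where you intersect the sets for $D^{\omega\gamma}$, $\gamma<\alpha$ (with $\gamma=0$ giving a trivially clopen set), while the paper intersects the $\mathbf{\Sigma}^0_{2\gamma+1}$ sets for $D^{\omega\gamma+k}$.
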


\begin{proof}
    Given $s$, $k$ and $\alpha$ as in the statement, part (i) implies part (ii) since for any $s$, 
    \[
    s\in D^{\omega\alpha+k}(T)\iff s=\emptyset \lor  \exists t\in \omega^k \ (s^\smallfrown t\in D^{\omega \alpha}(T)).
    \]
    For $\alpha=1$, $s\in D^\omega(T)$ can be expressed as a $\mathbf{\Pi}_2^0$ condition on $T$ as follows: 
    \[
    s\in D^{\omega}(T)\iff s\in \bigcap_k D^k(T)\iff s=\emptyset \lor \forall k \exists t\in \omega^k \ (s^\smallfrown t\in T).
    \]
    For the successor step, suppose (i) holds for some $\alpha$. Then also (ii) holds for $\alpha$ and observe that 
    \[
    D^{\omega(\alpha+1)}(T)= D^{\omega}(D^{\omega\alpha}(T))= \bigcap_k D^k(D^{\omega\alpha}(T))=\bigcap_k D^{\omega\alpha+k}(T).
    \] 
    So by the inductive hypothesis, $s\in D^{\omega(\alpha+1)}(T)$ is a countable intersection of $\mathbf{\Sigma}_{2\alpha+1}^0$ conditions on $T$, and hence is $\mathbf{\Pi}^0_{2(\alpha+1)}$, as desired. 
        
    Now let $\lambda<\omega_1$ be limit and suppose the statement holds for all $\alpha<\lambda$. Then 
    \[
    s\in D^{\omega\lambda}(T)\iff s\in \bigcap _{\beta<\omega\lambda}D^{\beta}(T)\iff s\in \bigcap_k\bigcap_{\alpha<\lambda}D^{\omega\alpha+k}(T),
    \] 
    and hence $s\in D^{\omega\lambda}(T)$ is a $\mathbf{\Pi}_{2\lambda}^0$ condition on $T$.
\end{proof}

\begin{lemma}\label{lemma-upper-bounds} 
    Let $\alpha<\omega_1$ and $k\in \omega$. 
    \begin{itemize}
        \item[(i)] If $\alpha$ is a limit ordinal, then $\lht_{<\omega\alpha}$ is $\mathbf{\Sigma}^0_{2\alpha}$.
        \item[(ii)] If $\alpha$ is a successor ordinal, then $\lht_{<\omega\alpha}$ is $\mathbf{\Sigma}^0_{2\alpha+1}$. 
        \item[(iii)]$\lht_{<\omega\alpha+k+1}$ is $\mathbf{\Pi}^0_{2\alpha+2}$.
    \end{itemize}
\end{lemma}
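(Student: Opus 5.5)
The key reformulation is that $|T|\le\beta$ holds exactly when $D^{\beta+1}(T)=D^\beta(T)$, since the sequence $D^\xi(T)$ is decreasing and stabilizes from $|T|$ on. Hence
\[
T\in\lht_{<\gamma}\iff \exists \beta<\gamma\ \bigl(D^{\beta+1}(T)=D^{\beta}(T)\bigr)\iff \exists\beta<\gamma\ \forall s\in\omega^{<\omega}\ \bigl(s\in D^\beta(T)\to s\in D^{\beta+1}(T)\bigr).
\]
So everything reduces to bounding, for each fixed $s$ and $\beta$, the complexity of the condition $s\in D^\beta(T)\to s\in D^{\beta+1}(T)$. We then take the countable intersection over $s$ and the union over $\beta<\gamma$. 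The needed bounds come from Lemma~\ref{D-upper-bd}. For $\beta=\omega\delta+j$ with $\delta>0$, the set $\{T: s\in D^{\beta}(T)\}$ is $\mathbf{\Sigma}^0_{2\delta+1}$. For $\delta=0$ it is open, because $s\in D^j(T)$ just says that $s=\emptyset$ or some $s^\smallfrown t$ with $t\in\omega^j$ lies in $T$. This also covers $\alpha=0$.

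For (iii), write $\gamma=\omega\alpha+k+1$. Every $\beta<\gamma$ is either $\omega\delta+j$ with $\delta<\alpha$, or $\omega\alpha+j$ with $j\le k$. In the second case both $\{s\in D^{\omega\alpha+j}(T)\}$ and $\{s\in D^{\omega\alpha+j+1}(T)\}$ are $\mathbf{\Sigma}^0_{2\alpha+1}$. The implication is therefore a union of a $\mathbf{\Pi}^0_{2\alpha+1}$ set and a $\mathbf{\Sigma}^0_{2\alpha+1}$ set, which lies in $\mathbf{\Delta}^0_{2\alpha+2}$. Intersecting over $s$ gives a $\mathbf{\Pi}^0_{2\alpha+2}$ set. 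In the first case the same computation yields $\mathbf{\Pi}^0_{2\delta+2}\subseteq\mathbf{\Pi}^0_{2\alpha}$ when $\delta<\alpha$. Now $\lht_{<\gamma}$ is the union of finitely many $\mathbf{\Pi}^0_{2\alpha+2}$ sets together with the set $\lht_{<\omega\alpha}$. The union is $\mathbf{\Pi}^0_{2\alpha+2}$ once we know that $\lht_{<\omega\alpha}$ belongs to $\mathbf{\Pi}^0_{2\alpha+2}$. This follows from (i) and (ii): $\mathbf{\Sigma}^0_{2\alpha}$ and $\mathbf{\Sigma}^0_{2\alpha+1}$ are both contained in $\mathbf{\Pi}^0_{2\alpha+2}$.

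For (i) and (ii), $\lht_{<\omega\alpha}=\bigcup_{\delta<\alpha}\bigcup_j\lht_{<\omega\delta+j+1}$. By the computation in (iii), each piece $\lht_{<\omega\delta+j+1}$ is $\mathbf{\Pi}^0_{2\delta+2}$. To avoid circularity, we prove (i)–(iii) together by induction on $\alpha$; the term $\lht_{<\omega\delta}$ that appears in the pieces is covered by the inductive hypothesis. If $\alpha$ is a limit, then $2\delta+2<2\alpha$ for every $\delta<\alpha$. A countable union of sets from $\bigcup_{\xi<2\alpha}\mathbf{\Pi}^0_\xi$ is $\mathbf{\Sigma}^0_{2\alpha}$, which gives (i). If $\alpha=\delta_0+1$, then every piece is in $\mathbf{\Pi}^0_{2\delta_0+2}=\mathbf{\Pi}^0_{2\alpha}$, so the countable union is $\mathbf{\Sigma}^0_{2\alpha+1}$, which gives (ii). The case $\alpha=0$ is trivial, since $\lht_{<0}=\emptyset$.

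The only delicate point is the bookkeeping for $\delta=0$, where Lemma~\ref{D-upper-bd} does not apply directly; the finite-derivative conditions are open and are handled directly. We also need the fact that $\mathbf{\Sigma}^0_\xi\cup\mathbf{\Pi}^0_\xi\subseteq\mathbf{\Delta}^0_{\xi+1}$ to absorb the lower-level pieces.
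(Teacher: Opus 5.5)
Your proof is correct and follows essentially the paper's route. You rewrite $\lht_{<\gamma}$ through the stabilization condition $\forall s\,(s\notin D^\beta(T)\lor s\in D^{\beta+1}(T))$, apply Lemma~\ref{D-upper-bd}, and take the union over $\beta=\omega\delta+j$ with $\delta<\alpha$ for (i) and (ii). The simultaneous induction is unnecessary, though: your own observation that $|T|\le\beta\iff D^{\beta+1}(T)=D^\beta(T)$ gives (iii) directly from the single stage $\beta=\omega\alpha+k$, as the paper does, so (iii) does not need (i), (ii), or the smaller stages.
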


\begin{proof} 
    First observe that
    \begin{equation*}\label{succ-lt}
        |T|<\omega\alpha+k+1\iff \forall s(s\notin D^{\omega\alpha+k} (T)\lor s\in D^{\omega\alpha+k+1}(T) ).
    \end{equation*}
    So by Lemma~\ref{D-upper-bd}, $\lht_{<\omega\alpha+k+1}$ is $\mathbf{\Pi}_{2\alpha+2}^0$. This proves part (iii).
    
    To prove (i) and (ii), observe that for $\alpha>0$,
    \begin{equation}\label{limit-lt}
        |T|<\omega\alpha \iff \exists \mu<\omega \alpha\ \forall s( s\notin D^\mu (T)\lor s\in D^{\mu+1}(T))
    \end{equation}
    and since $\mu<\omega\alpha $ iff $\mu=\omega\alpha'+m$ for some $\alpha'<\alpha$ and $m\in \omega$, we can rewrite $\eqref{limit-lt}$ as 
    \[
    |T|<\omega\alpha \iff \exists \alpha'<\alpha\ \exists m\in \omega \ \forall s(s\notin D^{\omega\alpha'+m} (T)\lor s\in D^{\omega \alpha'+m+1}(T) ).
    \] 
    So by Lemma~\ref{D-upper-bd}, $\lht_{<\omega\alpha}$ is a countable union of $\mathbf{\Pi}_{2\alpha'+2}^0$ sets for $\alpha'<\alpha$. So $\lht_{<\omega\alpha}$ is $\mathbf{\Sigma}_{2\alpha}^0$ if $\alpha$ is a limit ordinal, and $\mathbf{\Sigma}_{2\alpha+1}^0$ if $\alpha$ is a successor.    
\end{proof}

\subsection{Lower Bounds on Complexity}\label{subsection-lower-bd}

In this subsection, we construct Wadge reductions from $2^\omega$ to $\tr$ (Lemma~\ref{lemma-lower-bound}). These reductions will be used in the proof of Theorem~\ref{thm-complete} to show that the upper bounds from Lemma~\ref{lemma-upper-bounds} are the best possible. We start by introducing some auxiliary operations on trees.

\begin{defn}
    Let $S, T\in \tr$ and $T_n\in \tr$ for each $n\in \omega$. 
    \begin{enumerate}
        \item[(i)]The \textbf{tree sum} of $(T_n)$, denoted $\bigoplus_nT_n$, is the tree consisting of the empty sequence and of all sequences $(n)^\smallfrown t$ with $n\in \omega$, $t\in T_n$.
        \item[(ii)]The \textbf{tree product} of $(T_n)$, denoted $\bigotimes_n T_n$, is the tree consisting of the empty sequence and all sequences $(s_0,\dots, s_m)$, $m\in \omega$ such that for each $k\leq m$, 
        \begin{equation}\label{proj}
            s_k=((t_0^0),(t_1^0,t_0^1),(t_2^0,t_1^1,t^2_0), \dots, (t_k^0,\dots , t_0^k))
        \end{equation}
        and for each $i\leq k$, $(t^i_0,\dots, t^i_{k-i})\in T_i$. 
        \item[(iii)] The \textbf{join} of $S$ and $T$, denoted $S\vee T$, is the tree obtained from the disjoint union $S\sqcup T$ by gluing the roots of $S$ and $T$. Formally, it is a tree on $\omega$ defined by 
        \[
        S\vee T=\{\emptyset\}\cup\{(2n_i)_i:(n_i)_i\in S \}\cup \{(2m_i+1)_i:(m_i)_i\in T\}.
        \]
    \end{enumerate}
\end{defn}

The sum and join of trees on $\omega$ are defined as trees on $\omega$ and we will view the product of trees on $\omega$ as a tree on $\omega$ by fixing a bijection $\omega^{<\omega}\to \omega$. It follows from the definition of the topology on $\tr$ that the maps
    \begin{align*}
        (T_0, T_1, \dots )&\mapsto \bigoplus_n T_n,\\
        (T_0, T_1, \dots )&\mapsto \bigotimes_n T_n,\\
        (S,T)&\mapsto S \vee T,
    \end{align*}
are continuous.

We now establish the relationship between Lusin height and the operations of sums and products of trees.

\begin{lemma}\label{lem-tech-tree-ops}
    Let $T_n\in \tr$ for each $n\in \omega$. 
    \begin{itemize}
        \item[(i)] The tree $\bigoplus_n T_n$ is well-founded if and only if $T_n$ is well-founded for all $n\in \omega$. Moreover,
        \begin{equation}\label{plus}
            \big|\bigoplus _nT_n\big|\leq \sup_n \big( |T_n|+1\big).
        \end{equation}
        \item[(ii)] The tree $\bigotimes_n T_n$ is well-founded if and only if there exists $n_0\in \omega$ such that $T_{n_0}$ is well-founded. Moreover, if $T_{n_0}$ is well-founded, then
        \begin{equation}\label{well-times}
            \big|\bigotimes_nT_n\big|\leq |T_{n_0}|+n_0.
        \end{equation}
        If $T_n$ is ill-founded for all $n\in \omega$, then 
        \begin{equation}\label{ill-times}
            \big|\bigotimes _nT_n\big|\leq \sup_n|T_n|.
        \end{equation}
    \end{itemize}
\end{lemma}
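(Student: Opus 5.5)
The plan is to work throughout with the rank function $\rho$ and Proposition~\ref{prop-ht}. Concretely, I will show that every node of the new tree with finite (i.e.\ non-$\infty$) rank has rank bounded by the appropriate ordinal. Well-foundedness will be handled by a direct analysis of infinite branches.

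\emph{Part (i), sums.} Every node of $\bigoplus_n T_n$ other than the root has the form $(n)^\smallfrown t$ with $t\in T_n$. An infinite branch of the sum is exactly the root followed by $(n)^\smallfrown x$, where $x$ is an infinite branch of $T_n$, so the first claim is immediate. For the height, I first check by induction on rank that $\rho_{\bigoplus T_n}((n)^\smallfrown t)=\rho_{T_n}(t)$ whenever $t\neq\emptyset$. The node $(n)$ corresponds to the root of $T_n$. Its rank, when not $\infty$, is $\sup\{\rho_{T_n}(t)+1: t\neq\emptyset\}$, and by Proposition~\ref{prop-ht} this is at most $|T_n|$. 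Since the root of the sum itself has rank $\infty$ by definition, Proposition~\ref{prop-ht} gives $|\bigoplus_n T_n|\le\sup_n(|T_n|+1)$.

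\emph{Part (ii), products.} A node $(s_0,\dots,s_m)$ codes, for each $i\le m$, a node $(t^i_0,\dots,t^i_{m-i})\in T_i$ of length $m-i+1$, and extending the sequence extends all of these simultaneously. Hence an infinite branch of $\bigotimes_n T_n$ yields an infinite branch in every $T_i$. Conversely, if every $T_i$ has an infinite branch $x_i$, then interleaving the $x_i$ as in \eqref{proj} gives an infinite branch of the product. This proves the first claim. For \eqref{well-times}, I take a node $u=(s_0,\dots,s_m)$ of the product and its $n_0$-th component $u^{(n_0)}\in T_{n_0}$, and I aim to show $\rho(u)\le \rho_{T_{n_0}}(u^{(n_0)})$ when $m\ge n_0$. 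This holds by induction, since a one-step extension of $u$ strictly extends $u^{(n_0)}$ by one coordinate. For $m<n_0$ the $n_0$-th component is not yet present, and each of the $n_0$ steps contributes at most $+1$. Combining this with the rank of the root of $T_{n_0}$ gives $\rho(u)+1\le |T_{n_0}|+n_0$ for all non-root nodes.

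For \eqref{ill-times}, I consider a node $u$ of finite rank in the product, say of length $m+1$. A finite-rank node of the product must have some component $u^{(i)}$, $i\le m$, of finite rank in $T_i$. Otherwise every component extends to an infinite branch, and interleaving those branches would extend $u$ to an infinite branch of the product. I then prove $\rho(u)\le\min_{i\le m}\rho_{T_i}(u^{(i)})$, where the minimum is taken over the finite-rank components, by the same induction as before. Since $u^{(i)}$ is a non-root node of finite rank, Proposition~\ref{prop-ht} gives $\rho_{T_i}(u^{(i)})+1\le|T_i|$, and therefore $\rho(u)+1\le\sup_n|T_n|$.

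The main obstacle I expect is the bookkeeping in (ii): making precise the correspondence between extensions of $u$ and simultaneous extensions of its components, including the newly added component $T_{m+1}$ at each step. This is exactly where the $+n_0$ loss in \eqref{well-times} arises, since components with index $>m$ are not yet constrained.
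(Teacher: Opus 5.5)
Your proposal is correct and follows essentially the paper's proof. For sums you use the identity $\rho_{\bigoplus T_n}((n)^\smallfrown t)=\rho_{T_n}(t)$, rightly restricting to $t\neq\emptyset$ and treating $(n)$ separately; for products you prove $\rho_T(s)\le\rho_{T_i}(\proj_i(s))$ by induction, with the $+n_0$ coming from the first $n_0$ levels, and combine everything via Proposition~\ref{prop-ht}, while your interleaving argument (using that every $T_n$ is ill-founded) justifies the existence of a finite-rank component, a step the paper only asserts.
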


\begin{proof}
    \begin{itemize}
        \item[(i)] The first statement follows immediately from the definition of tree sums. Let $T$ denote $\bigoplus_n T_n$ and note that given $n\in \omega$ and $t\in T_n$ we have
        \[
        \rho_T((n)^\smallfrown t)) = \rho_{T_n}(t).
        \]
        Observe also that if $\rho_T((n))\neq \infty$, then $\rho_T((n)) = |T_n|$. Therefore, \eqref{plus} follows from Proposition~\ref{prop-ht}.
        \item[(ii)] Let $T$ denote $\bigotimes_n T_n$. We start by defining an auxiliary function. For a finite sequence $s$, by $\lh(s)$ we denote its length. For $s\in T$ and $i\in \omega$, let $\proj_i(s)$ be the longest node of $T_i$ which appears in $s$. Formally, $\proj_i(s)=\emptyset$ if $s=\emptyset$ or $i\geq\lh(s)$, otherwise if $s=(s_0,\dots , s_k)$ and $s_k$ is as in \eqref{proj}, then 
        \[
        \proj_i(s)=(t^i_0,t^i_1,\dots, t^i_{k-i})\in T_i.
        \] 
        Note that if $(s_0, s_1, \dots)$ is an infinite branch in $\bigotimes_n T_n$, then for each $n\in \omega$ and $k\geq n$, $\proj_n(s_k)\in T_n $ and $\proj_n(s_k)\subsetneq \proj_n(s_{k+1})$, so $T_n$ is ill-founded. Conversely, if each $T_n$ has an infinite branch $(t^n_i)_{i\in \omega}$, then the sequence $(s_k)_{k\in \omega}$ defined as in \eqref{proj} will be an infinite branch through the tree $\bigotimes_n T_n$. 
        
        For the second part of the proof, we will need the following claim: for $s=(s_0,\dots,s_k)\in T$ and $i\in \{0,\dots ,k\}$,
        \begin{equation}\label{height}
            \rho_T(s)\leq \rho_{T_{i}}(\proj_i(s)).
        \end{equation}
        We prove \eqref{height} by induction on $\rho_{T_i}(\proj_i(s))$. If $\rho_{T_i}(\proj_i(s))=0$, then $\proj_i(s)$ is terminal in $T_i$, so $s$ is terminal in $T$, and hence $\rho_T(s)=0$. Let $\alpha<\omega_1$ and assume that \eqref{height} holds for all $s$ with $\rho_{T_i}(\proj_i(s))<\alpha$. Let $s$ be such that $\rho_{T_i}(\proj_i(s))=\alpha$. If $s$ has no successors in $T$, we are done. If $t\in T$ is an immediate successor of $s$, then $\proj_i(t)$ is an immediate successor of $\proj_i(s)$, and hence $\rho_{T_i}(\proj_i(t))< \alpha$. By the inductive hypothesis,
        \[
        \rho_T(t) \leq \rho_{T_i}(\proj_i(t)).   
        \]
        Therefore, by the definition of the rank of a node, we have 
        \begin{equation*}
            \rho_T(s) \leq \rho_{T_i}(\proj_i(s)),
        \end{equation*}
        which completes the proof of \eqref{height}.
                       
        For any tree $S$ and $n\in \omega$,
        \[
        |S| \leq \sup\{\rho_S(s)+1: s\in S\land  \lh(s)\geq n+1\}+n.
        \]
        In particular, if there exists $n_0\in \omega$ such that $T_{n_0}$ is well-founded, then by \eqref{height},
        \begin{align*}
            |T|&\leq \sup\{\rho_T(s)+1: t\in T\land  \lh(s)\geq n_0+1\}+n_0 \\
            &\leq \sup\{\rho_{T_{n_0}}(\proj_{n_0}(s))+1: s\in T\land \lh(s)\geq n_0+1\}+n_0 \\
            &\leq \sup\{\rho_{T_{n_0}}(t)+1: t\in T\land \lh(t)\geq 1\}+n_0 \\
            &= |T_{n_0}|+n_0.
        \end{align*}
        This proves \eqref{well-times}. On the other hand, if $T$ is ill-founded and $s\in T$ satisfies $\rho_T(s)\neq \infty$, then there is $k_0<\lh(s)$ such that $\rho_{T_{k_0}}(\proj_{k_0}(s))\neq \infty$. So we have
        \[
        \rho_T(s)+1 \leq \rho_{T_{k_0}}(\proj_{k_0}(s))+1 \leq |T_{k_0}|.
        \]
        So by Proposition~\ref{prop-ht}, $|T|\leq \sup_n |T_n|$. \qedhere
    \end{itemize}
\end{proof}

\begin{lemma}\label{main-lem}
    Let $0<\alpha<\omega_1$ and let $C\subseteq 2^\omega$ be $\mathbf{\Pi}^0_{2\alpha}$. There is a continuous function $S_{C}:2^\omega\to \tr$ such that 
    \begin{align}
        x\in C&\implies S_C(x)\in \ilf_{\leq \omega\alpha},\label{red1}\\
        x\notin C&\implies S_C(x)\in \wf_{<\omega\alpha}\label{red2}.
    \end{align}
\end{lemma}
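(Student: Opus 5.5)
We argue by induction on $\alpha$, using the tree sum to handle countable intersections and the tree product to handle countable unions. The two inductive steps are the passage from $\mathbf{\Pi}^0_{2\alpha}$ to $\mathbf{\Pi}^0_{2\alpha+2}$ and the limit case.

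\emph{Base case $\alpha=1$.} Here $C$ is $\mathbf{\Pi}^0_2$, so $C=\bigcap_n U_n$ with $U_n$ open and decreasing. Write $U_n=\bigcup_m [u_{n,m}]$ as a union of basic clopen sets. Define $S_C(x)$ to be the tree of all sequences $(0,0,\dots,0)$ of length $\leq n$ such that $x\in \bigcup_{m\leq \lh(x\restriction n)}[u_{n,m}]$; more precisely, length $n$ is added once we have seen finitely that $x\in U_n$. This is continuous. If $x\in C$, the tree is the infinite chain, which is ill-founded of height $\omega$, or rather of height $0$ since the chain is fixed by $D$. Since heights are small here, we instead use a comb: the root together with branches $0^j$ of length $j$ for every $j$ such that $x\in U_j$ has been witnessed. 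If $x\in C$ this gives height $\omega$ and the tree is well-founded, which is wrong, so we modify it. Simplest is to take the single branch $(0)^n$ of length up to the number of $U_n$ containing $x$. Then $x\in C$ gives an infinite branch, hence an ill-founded tree of height $0\le\omega$, and $x\notin C$ gives a finite chain, hence a well-founded tree of finite height $<\omega$.

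\emph{Successor step $\alpha\to\alpha+1$.} Let $C$ be $\mathbf{\Pi}^0_{2\alpha+2}$. Write $C=\bigcap_n\bigcup_m C_{n,m}$ with $C_{n,m}\in\mathbf{\Pi}^0_{2\alpha}$, where we may assume $\bigcup_m C_{n,m}$ is decreasing in $n$ and $C_{n,m}$ is increasing in $m$ (after taking finite unions, which remain $\mathbf{\Pi}^0_{2\alpha}$). For the complements we have $\sim\bigcup_m C_{n,m}=\bigcap_m \sim C_{n,m}$. We use the reductions $S_{n,m}=S_{C_{n,m}}$ given by induction. We then set
\[
R_n(x)=\bigotimes_m S_{n,m}(x).
\]
By Lemma~\ref{lem-tech-tree-ops}(ii), if $x\in\bigcup_m C_{n,m}$ and $m_0$ is least with $x\in C_{n,m_0}$, then $R_n(x)$ is well-founded of height $<\omega\alpha+m_0$; in particular $|R_n(x)|<\omega\alpha+\omega$. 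If no $m$ works, then $R_n(x)$ is ill-founded of height $\le\omega\alpha$ by \eqref{ill-times}. This swaps well- and ill-foundedness relative to the induction hypothesis, so we apply the construction to the complements. That is, we take $S_{n,m}$ to reduce the $\mathbf{\Pi}^0_{2\alpha}$ sets appearing when we write $\sim C=\bigcup_n\bigcap_m$ of $\mathbf{\Sigma}^0_{2\alpha}$ sets, which we refine into $\mathbf{\Pi}^0_{2\alpha}$ pieces. The precise alternation is the bookkeeping to be fixed.

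The target shape is as follows. First, a product over a $\mathbf{\Pi}^0_{2\alpha}$-indexed union produces a well-founded tree of height $<\omega\alpha+\omega$ exactly on the union. Second, a sum over $n$ of such trees, via Lemma~\ref{lem-tech-tree-ops}(i), is well-founded exactly when all factors are. Moreover, the height bound $\sup_n(|T_n|+1)$ must be made $<\omega(\alpha+1)$ in the well-founded case and $\le\omega(\alpha+1)$ otherwise.

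\emph{Main obstacle.} The heights $\omega\alpha+m_0(n)$ may be unbounded in $n$, and then their supremum reaches $\omega(\alpha+1)$. To prevent this we must uniformly bound $m_0$. I would handle this by arranging the decomposition so that $x\notin C$ corresponds to an ill-founded or well-founded alternative in which only finitely many coordinates carry large height. Concretely, I would join (using $\vee$) the tree at stage $n$ with a chain witnessing a cutoff, possibly using the decreasing normalization so that failure at stage $n$ truncates all later stages. Obtaining the height $<\omega\alpha$ in \eqref{red2} exactly, rather than merely $\le$, is the delicate point; this is where the join operation is presumably needed.

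\emph{Limit step.} For limit $\lambda$, write a $\mathbf{\Pi}^0_{2\lambda}$ set as $\bigcap_n C_n$ with $C_n\in\mathbf{\Pi}^0_{2\alpha_n}$ and $\alpha_n<\lambda$ increasing. The tree $\bigoplus_n S_{C_n}(x)$ is then ill-founded with height $\le\sup(\omega\alpha_n+1)\le\omega\lambda$ when $x\in C$. When $x\notin C$, it is ill-founded as well, so again we need a product or join combination. I would mirror the successor construction, taking products to convert the ill-founded case.
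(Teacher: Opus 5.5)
\paragraph{Verdict.} Your proposal has a genuine gap: the successor and limit steps are never carried out, because you have the roles of the two operations reversed.

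\paragraph{Sums and products.} With the orientation of \eqref{red1}--\eqref{red2}, where membership in $C$ corresponds to ill-foundedness, Lemma~\ref{lem-tech-tree-ops} says the following. The sum $\bigoplus_n T_n$ is ill-founded iff \emph{some} $T_n$ is, so sums implement \emph{unions}. The product $\bigotimes_n T_n$ is ill-founded iff \emph{all} $T_n$ are, so products implement \emph{intersections}. This reversal has three consequences in your write-up:
\begin{itemize}
\item your $R_n(x)=\bigotimes_m S_{n,m}(x)$ comes out ``swapped'';
\item your limit-stage tree $\bigoplus_n S_{C_n}(x)$ is ill-founded as soon as $x$ lies in a single $C_n$;
\item you are pushed toward complements, which the induction hypothesis cannot supply, since it only gives reductions for $\mathbf{\Pi}^0_{2\alpha}$ sets with this fixed orientation.
\end{itemize}

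\paragraph{The correct construction.} For $C=\bigcap_i\bigcup_j B_{i,j}$ with $B_{i,j}\in\mathbf{\Pi}^0_{2\alpha}$, take $S_C(x)=\bigotimes_i\bigoplus_j S_{B_{i,j}}(x)$, and take $\bigotimes_n S_{B_n}(x)$ at limit stages.
\begin{itemize}
\item By \eqref{plus}, the inner sum lies in $\ilf_{\leq\omega\alpha+1}$ if $x\in\bigcup_j B_{i,j}$. Otherwise it lies in $\wf_{\leq\omega\alpha}$, since every summand has height $<\omega\alpha$.
\item If $x\in C$, every factor is in $\ilf_{\leq\omega\alpha+1}$, and \eqref{ill-times} finishes this case.
\item If $x\notin C$, pick any index $i_0$ with a well-founded factor. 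Then \eqref{well-times} gives height $\leq\omega\alpha+i_0<\omega(\alpha+1)$.
\end{itemize}
So the ``main obstacle'' you describe, unbounded $m_0(n)$ whose supremum reaches $\omega(\alpha+1)$, does not arise. The product bound depends only on one well-founded factor and its index. The strict inequality in \eqref{red2} therefore comes for free, and no join is needed here; joins only enter in Lemma~\ref{lemma-lower-bound}.

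\paragraph{The base case is not continuous.} Your tree is a single chain $\{0^k:k\leq N(x)\}$. For any reduction of this shape, $C=\bigcap_k\{x: 0^k\in S_C(x)\}$. Continuity into $\tr$ would make each of these sets clopen, so $C$ would be closed. For your specific choice, $0^k\in S_C(x)$ iff $x\in U_{k-1}$, which is open but generally not clopen. The tree has to record finite witnesses of membership. One way is $\bigotimes_i\bigoplus_j P_{D_{i,j}}(x)$, where $U_i=\bigcup_j D_{i,j}$ with $D_{i,j}$ clopen, and $P_D(x)$ is a single infinite branch if $x\in D$ and $\{\emptyset\}$ otherwise. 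Another is the tree of finite increasing sequences $n_1<\dots<n_k$ such that each $x\restriction n_j$ witnesses $x\in U_j$.
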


\begin{proof} 
    The proof is by induction on $\alpha$. For ease of notation, let $\Phi(\alpha,C,S)$ abbreviate the statement that $S:2^\omega\to \tr$ is continuous which satisfies \eqref{red1} and \eqref{red2}. Given a clopen set $D\subseteq2^\omega$, define $P_D:2^\omega\to \tr$ such that $P_D(x)$ is a tree with a single infinite branch if $x\in D$ and $P_D(x)=\{\emptyset\}$ if $x\notin D$. Clearly, $P_D$ is continuous for any clopen $D$. Given $C\in \mathbf{\Pi}_2^0$, $C=\bigcap_i\bigcup_jD_{i,j}$ for some clopen $D_{i,j}$'s. We define \[S_{C}(x)=\bigotimes_i\bigoplus_jP_{D_{i,j}}(x).\] The function $S_C$ is continuous by the definition of the topology on $\tr$. To see that $S_C$ satisfies \eqref{red1} and \eqref{red2}, first note that by Lemma~\ref{lem-tech-tree-ops}(i),
    \begin{equation}\label{zero-eq-1}
        x\in \bigcup_jD_{i,j}\implies  \bigoplus_jP_{D_{i,j}}(x)\in \ilf_{\leq 1}
    \end{equation}
    and
    \begin{equation}\label{zero-eq-2}
        x\notin \bigcup_jD_{i,j}\implies  \bigoplus_jP_{D_{i,j}}(x)\in \wf_{= 1}.
    \end{equation}
    So if $x\in C$, then $S_C(x)$ is a product of trees in $\ilf_{\leq 1}$, and hence $S_C(x)\in \ilf_{\leq 1}$ by \eqref{ill-times}. If $x\notin C$, then $S_C(x)$ is a product of trees in $\wf_{=1}$ and (possibly) in $\ilf_{\leq 1}$, so $S_C(x)\in \wf_{<\omega}$ by \eqref{well-times}. 
    We are done with the base case.
    
    Now suppose the statement holds for $0<\alpha<\omega_1$. Let $C\in \mathbf{\Pi}^0_{2\alpha+2}$ and let $B_{i,j}\in \mathbf{\Pi}_{2\alpha}^0$ be such that $C=\bigcap_i\bigcup_j B_{i,j}$. By the inductive hypothesis there are continuous functions $S_{i,j}$ which satisfy $\Phi(\alpha,B_{i,j},S_{i,j})$ for each $i,j\in \omega$. We claim that if $S_C:2^\omega\to \tr$ is defined by 
    \[
    S_C(x)=\bigotimes_i\bigoplus_jS_{i,j}(x),
    \] 
    then $\Phi(\alpha+1,C,S_C)$. The proof is exactly as in the base case. The function $S_C$ is clearly continuous for the same reason. Observe that by the inductive hypothesis and Lemma~\ref{lem-tech-tree-ops}(i),
    \[
    x\in \bigcup_jB_{i,j} \implies \big( \forall j (|S_{i,j}(x)|\leq \omega\alpha)\land \exists j(S_{i,j}(x)\in \ilf) \big)\implies \bigoplus_jS_{i,j}(x)\in \ilf_{\leq \omega\alpha+1}
    \] 
    and 
    \[
    x\notin \bigcup_{j}B_{i,j} \implies \forall j (S_{i,j(x)}\in \wf_{<\omega\alpha})\implies  \bigoplus_jS_{i,j}(x)\in \wf_{\leq \omega\alpha}.
    \]
    Then if $x\in C$, $S_C(x)$ is a product of trees in $\ilf_{\leq \omega\alpha+1}$, so $S_C(x)\in \ilf_{\leq\omega\alpha+1}$ by \eqref{ill-times} and if $x\notin C$, then $S_C(x)$ is a product of trees in $\wf_{\leq \omega\alpha}$ and (possibly) in $\ilf_{\leq\omega\alpha+1}$ hence $S_C(x)\in \wf_{<\omega(\alpha+1)}$ by \eqref{well-times}. This proves the successor case.

    Let $\lambda$ be a limit ordinal and $C\in \mathbf{\Pi}_\lambda^0$. For $n\in \omega$, let $\alpha_n<\lambda$ and $B_n\in \mathbf{\Pi}^0_{2\alpha_n}$ be such that $C=\bigcap_n B_n$. By the inductive hypothesis, for each $n$, there is $S_n:2^\omega \to \tr$ such that $\Phi(\alpha_n,B_n,S_n)$ holds. We define $S_C:2^\omega\to \tr$ by 
    \[
    S_C(x)=\bigotimes_nS_{B_n}(x).
    \]
    If $x\in C$, then for all $n$ we have $x\in B_n$, and so $S_{B_n}(x)\in \ilf_{\leq \omega \alpha_n}$. Hence by \eqref{ill-times}, $S_C(x)\in \ilf_{\leq \omega\lambda}$. If $x\notin C$, then there exists $n_0$ such that $x\notin B_{n_0}$, i.e. $S_{B_{n_0}}(x)\in \wf_{<\omega\alpha_{n_0}}$. Then by \eqref{well-times} we have $S_{C}\in \wf_{<\omega\lambda}$.
\end{proof}

\begin{lemma}\label{lemma-lower-bound}
    Let $\alpha<\omega_1$ and let $A\subseteq 2^\omega$ be $ \mathbf{\Pi}^0_{2\alpha+2}$. For any $\gamma\geq  \omega\alpha$, there is a continuous function $T_A:2^\omega\to \tr$ such that 
    \begin{align}
        x\in A&\implies |T_A(x)|= \gamma ,\label{ta1}\\
        x\notin A&\implies |T_A(x)|=\gamma+1.\label{ta2}
    \end{align}
\end{lemma}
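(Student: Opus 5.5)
The plan is to reduce $A$ to a dichotomy of the form ``ill-founded versus well-founded of controlled height'', as in Lemma~\ref{main-lem}. I would then convert that dichotomy into the exact heights $\gamma$ versus $\gamma+1$ by grafting the variable tree onto a fixed tree of height exactly $\gamma$.

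\textbf{Step 1: the variable tree.} Write $A=\bigcap_i\bigcup_j B_{i,j}$ with each $B_{i,j}\in\mathbf{\Pi}^0_{2\alpha}$. For $\alpha=0$ the $B_{i,j}$ are clopen and we use $P_{B_{i,j}}$. For $\alpha>0$ we use $S_{i,j}=S_{B_{i,j}}$ from Lemma~\ref{main-lem}. Put $U_i(x)=\bigoplus_j S_{i,j}(x)$ and $U(x)=\bigotimes_i U_i(x)$, exactly as in the successor step of Lemma~\ref{main-lem}. That step already gives:
\begin{itemize}
\item if $x\in A$, then $U(x)$ is ill-founded (every $U_i(x)$ has an infinite branch);
\item if $x\notin A$, some $U_{i_0}(x)$ is well-founded of height $\le\omega\alpha$.
\end{itemize}
The only information I want to extract from $U(x)$ is whether it is ill-founded, together with a bound on its height in the well-founded case.

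\textbf{Step 2: the gadget.} Fix a well-founded tree $W_\gamma$ with $|W_\gamma|=\gamma$; for instance, a tree sum of trees realizing all heights below $\gamma$, using \eqref{plus}. The case $\gamma=0$ only occurs when $\alpha=0$ and is handled separately with single branches. Now let
\[
T_A(x)=\{\emptyset\}\cup\{(0)^\smallfrown t: t\in W_\gamma\vee U(x)\},
\]
so that the node $c=(0)$ has subtree $W_\gamma\vee U(x)$. Continuity of $T_A$ follows from continuity of $\vee$, $\oplus$ and $\otimes$. The heights are then read off from Proposition~\ref{prop-ht}:
\begin{itemize}
\item If $U(x)$ is ill-founded, then $\rho(c)=\infty$. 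The finite-rank nodes are those of the $W_\gamma$-part, contributing exactly $\gamma$, and those of $U(x)$ below its infinite branches. The latter must contribute at most $\gamma$, which requires $|U(x)|\le\gamma$; this holds once we know $|U(x)|\le\omega\alpha+1$ and $\gamma>\omega\alpha$, and it needs care when $\gamma=\omega\alpha$.
\item If $U(x)$ is well-founded, then $\rho(c)=\max(\gamma,|U(x)|)$, so $|T_A(x)|=\rho(c)+1$. This equals $\gamma+1$ provided $|U(x)|\le\gamma$.
\end{itemize}

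\textbf{Step 3: the main obstacle.} Everything hinges on the bound $|U(x)|\le\gamma$, and it is critical when $\gamma=\omega\alpha$. The inequality \eqref{well-times} only gives $|U(x)|\le\omega\alpha+i_0$, and the extra $+i_0$ is precisely what breaks the construction. Likewise, in the ill-founded case \eqref{ill-times} gives only $\le\omega\alpha+1$. I would first try to remove the $+i_0$ and the $+1$ by a modified product in which the levels of the product are tracked by every factor from the start. Concretely, replace each $U_i(x)$ by a version padded so that its nodes of depth $<i$ have infinite rank, i.e.\ prefix each factor with a copy of an infinite branch. Then the nodes of depth $<i_0$ in the product are no longer of finite rank and cannot raise the height. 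For the $+1$ coming from $\oplus$, I would replace $\bigoplus_j$ by a ``root-free'' sum (identifying the roots, using $\vee$ iterated), so that \eqref{plus} improves to a supremum without the $+1$. If this does not reach exactly $\le\omega\alpha$, the fallback is to check that $|U(x)|\le\omega\alpha$ directly via the rank inequality \eqref{height} applied factorwise.
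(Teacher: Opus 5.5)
There is a genuine gap, and it cannot be closed within your design.

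\textbf{Why the design fails.} As you observe, with a single node $c$ whose subtree is $W_\gamma\vee U(x)$, conditions \eqref{ta1} and \eqref{ta2} force $|U(x)|\le\gamma$ for all $x$ together with $U(x)\in\ilf\iff x\in A$. For $\gamma=\omega\alpha+k$ with $k\in\omega$, this is impossible whenever $A$ is $\mathbf{\Pi}^0_{2\alpha+2}$-complete:
\begin{enumerate}
\item[(a)] If $|T|\le\gamma$, then by Proposition~\ref{prop-ht} every node of ordinal rank has rank $<\gamma$.
\item[(b)] So by \eqref{ht}, $T\in\ilf$ iff $(n)\in D^{\omega\alpha+k}(T)$ for some $n$.
\item[(c)] By Lemma~\ref{D-upper-bd}(ii) this is a $\mathbf{\Sigma}^0_{2\alpha+1}$ condition on $T$ (an open one when $\alpha=0$).
\item[(d)] Hence $A=U^{-1}(\ilf)$ would be $\mathbf{\Sigma}^0_{2\alpha+1}$, a contradiction.
\end{enumerate}
The same computation, applied to $T_A(x)\in\lht_{<\gamma+2}$, shows more: for such $\gamma$, no continuous $T_A$ satisfying \eqref{ta1} and \eqref{ta2} can have $T_A(x)$ ill-founded exactly when $x\in A$.

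So the $+i_0$ in \eqref{well-times} is not a weakness of the estimate. No padding, root-free sum, or factorwise use of \eqref{height} can remove it. The padding idea is incoherent anyway: when $x\notin A$ the product must be well-founded, so all of its nodes have ordinal rank. Your construction does work for $\gamma\ge\omega\alpha+\omega$. But Theorem~\ref{thm-complete} uses exactly $\gamma=\omega\alpha+k$ and $\gamma=\omega\alpha+k+1$, and Lemma~\ref{becker-lem} may need $\gamma=\omega\alpha$.

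\textbf{The missing idea.} Membership in $A$ must be read off from the height alone. The outer quantifier $\forall i$ must never be collapsed into a single ill-founded/well-founded tree. The paper takes a sum over $i$ and attaches the gadget to every summand:
\[
T_A(x)=\bigoplus_i\bigl(P\vee\textstyle\bigoplus_jS_{i,j}(x)\bigr), \qquad P\in\wf_{=\gamma}.
\]
Since $D^\gamma(P\vee Q)=D^\gamma(P)\vee D^\gamma(Q)$ and $D^\gamma(P)=\{\emptyset\}$, there are two cases for each $i$ (for $\alpha>0$; the case $\alpha=0$ is analogous with the trees $P_{D_{i,j}}$).
\begin{enumerate}
\item[(1)] If $x\in\bigcup_jB_{i,j}$, then $D^\gamma$ of the $i$-th summand is a non-empty pruned tree. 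The well-founded $S_{i,j}(x)$ have height $<\omega\alpha\le\gamma$ and are erased, while the ill-founded ones have height $\le\omega\alpha$.
\item[(2)] If $x\notin\bigcup_jB_{i,j}$, then $|\bigoplus_jS_{i,j}(x)|\le\omega\alpha\le\gamma$ by \eqref{plus}, so the node $(i)$ has rank exactly $\gamma$.
\end{enumerate}
Thus $|T_A(x)|=\gamma+1$ precisely when some $i$ fails, i.e.\ when $x\notin A$, even though $T_A(x)$ is typically ill-founded in both cases. The product, with its $+n_0$ slack, is used only one level down, inside Lemma~\ref{main-lem}. There the required well-founded bound lies strictly below a limit ordinal, so the slack is harmless.
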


\begin{proof} 
    Fix $\alpha,\gamma$ and $A$ as in the statement and first suppose $\alpha\neq 0$. For $i,j\in \omega$, let $B_{i,j}\in \mathbf{\Pi}^0_{2\alpha}$ be such that $A=\bigcap_i\bigcup_jB_{i,j}$. For ease of notation, let $S_{i,j}$ denote the function $S_{B_{i,j}}$ from Lemma~\ref{main-lem}. Fix any $P\in \wf_{=\gamma}$. We define $T_A:2^\omega\rightarrow \tr$ by
    \[
    T_A(x)=\bigoplus_i(P\vee \bigoplus_j S_{i,j}(x)).
    \] 
    $T_A$ is continuous by the definition of the topology on $\tr$. We will prove that it satisfies \eqref{ta1} and \eqref{ta2}.
    
    Suppose $x\in A$ and fix some $i\in \omega$. There is $j\in \omega$ such that $x\in B_{i,j}$, and hence $S_{i,j}(x)\in \ilf_{\leq \omega \alpha}$ by \eqref{red1}; and if $j'\in \omega$ is such that $x\notin B_{i,j'}$, then $S_{i,j'}(x)\in \wf_{<\omega\alpha}$ by \eqref{red2}. Then $D^{\omega\alpha}(\bigoplus_jS_{i,j})$ is a non-empty pruned tree, call it $S^i$. We have 
    \[
    D^{\gamma}(P\vee \bigoplus_jS_{i,j})=D^{\gamma}(P)\vee S^i=S^i.
    \] 
    Since $i\in \omega$ was arbitrary, $D^{\gamma}(T_A(x))=\bigoplus_iS^i$ is a pruned tree, and hence $|T_A(x)|\leq \gamma$. By Lemma~\ref{lem-tech-tree-ops}(i) we have $|T_A(x)|\geq |P|$, so $|T_A(x)|=\gamma$. 
          
    On the other hand, if $x\notin A$, then there is some $i\in \omega$ such that for every $j\in \omega$, $x\notin B_{i,j}$, and hence $S_{i,j}(x)\in \wf_{<\omega \alpha}$ by \eqref{red2}. Then 
    \[
    D^{\gamma}(\bigoplus _j S_{i,j}(x))=\{\emptyset\}.
    \] 
    By the fact that $|P|=\gamma$ and $\{\emptyset\}\vee\{\emptyset\}=\{\emptyset\}$, we have 
    $D^{\gamma}(T_A(x))=\bigoplus_i\{\emptyset\}$. 
    Clearly 
    \[
    D^\gamma(T_A(x))\neq \{\emptyset\} = D^{\gamma+1}(T_A(x))=D^{\gamma+2}(T_A(x)),
    \] 
    so $|T_A(x)|= \gamma+1$.
    
    Finally, if $\alpha=0$, then for $i, j\in \omega$ let $D_{i,j}$'s be clopen sets such that $A=\bigcap_i\bigcup_jD_{i,j}$. Let $P_{D_{i,j}}$ be defined as in the proof of Lemma~\ref{main-lem}. Define $T_A$ by $T_A(x)=\bigoplus_i(P\vee \bigoplus_j P_{D_{i,j}}(x))$. Then one can prove that $T_A$ satisfies the desired properties by the same argument as above using \eqref{zero-eq-1} and \eqref{zero-eq-2}.
\end{proof}

\subsection{Proofs of Theorems~\ref{thm-complete} and \ref{thm-derivative-operator-complexity}}

\begin{proof}[Proof of Theorem~\ref{thm-complete}]
    Given $\alpha<\omega_1$ and $k\in \omega$, let $\gamma=\omega\alpha+k$. Lemma~\ref{lemma-lower-bound} implies that $\lht_{<\omega\alpha+k+1}$ and $\lht_{=\omega\alpha+k}$ are $\mathbf{\Pi}_{2\alpha+2}^0$-hard, and that $\lht_{=\omega\alpha+k+1}$ is $\mathbf{\Sigma}_{2\alpha+2}^0$-hard. Then by Lemma~\ref{lemma-upper-bounds} we have that $\lht_{<\omega\alpha+k+1}$ and $\lht_{=\omega\alpha}=\lht_{<\omega\alpha+1}\cap(\neg \lht_{<\omega\alpha})$ are $\mathbf{\Pi}_{2\alpha+2}^0$-complete. Taking $\gamma=\omega\alpha+k+1$, Lemma~\ref{lemma-lower-bound} implies that $\lht_{=\omega\alpha+k+1}$ is $\mathbf{\Pi}_{2\alpha+2}^0$-hard. Thus $\lht_{=\omega\alpha+k+1}$ is neither $\mathbf{\Sigma}_{2\alpha+2}^0$ nor $\mathbf{\Pi}_{2\alpha+2}^0$, it is a difference of two $\mathbf{\Pi}^0_{2\alpha+2}$ sets by Lemma~\ref{lemma-upper-bounds}(iii). \qedhere
\end{proof}

\begin{proof}[Proof of Theorem~\ref{thm-derivative-operator-complexity}]
    The function $D^{\omega\alpha+k}$ is $\mathbf{\Sigma}_{2\alpha+1}^0$-measurable by Lemma~\ref{D-upper-bd}(ii). That it is not $\mathbf{\Sigma}_{2\alpha}^0$-measurable follows from Theorem~\ref{thm-complete}(i) and the fact that $\lht_{<\omega\alpha+k+1}$ is the preimage under $D^{\omega\alpha+k}$ of the $\mathbf{\Pi}^0_2$ set 
    \[
    \{T\in \tr: \forall t\in T \ \exists n\in \omega \ (t^\smallfrown(n)\in T)\}\cup \{ \emptyset\}. \qedhere
    \]
\end{proof}

\begin{rem}\label{rem-same}
    Theorems~\ref{thm-complete} and \ref{thm-derivative-operator-complexity} remain true if we take Zafrany's definition of the Lusin derivative \cite[Definition 6.1]{Zafrany1989BorelIV}. To see this, first observe that in this case the same proofs work for Lemma~\ref{D-upper-bd} and Lemma~\ref{lemma-upper-bounds} that are used to calculate upper estimates of the complexity. Moreover, for each $\alpha<\omega_1$ joining a single infinite branch with a tree is a Wadge reduction from $\lht_{=\alpha}$ defined using our formula \eqref{derivative_definition} to the set $\lht_{=\alpha}$ defined using Zafrany's definition \cite[Definition 6.1]{Zafrany1989BorelIV}, so the latter set is not simpler than the former one.
\end{rem}

\section{Application to idealistic equivalence relations}\label{Section Application to idealistic equivalence relations}

\subsection{Countable Abelian $p$-groups} 

By $\cong$ we denote the isomorphism relation of groups. Fix a prime $p$. We say that an abelian group $H$ is a \textbf{$p$-group} if for every $h\in H$ there is $n\in \omega$ with $p^nh=0$. We say that $H$ is \textbf{divisible} if for every $h\in H$ and $n\in \omega$ there is $h'\in H$ with $nh'=h$. We say that $H$ is \textbf{reduced} if $H$ has no nontrivial divisible subgroups. By \cite[Theorem 3]{kaplansky1969infinite}, for every abelian $p$-group $H$ we have $H\cong \Red(H)\oplus \Div(H)$, where $\Div(H)$ is the maximal divisible subgroup of $H$ and $\Red(H)=H/\Div(H)$ is reduced. The group $\Div(H)$ is isomorphic to the direct sum $\mathbb Z(p^\infty)^{(n)}$ of $n$ copies of the Prüfer $p$-group for some $n\in \omega+1$. We refer to $n$ as the \textbf{rank of the maximal divisible subgroup of $H$}.

Define a transfinite sequence $(p^\alpha H)_{\alpha<\omega_1}$ of subgroups of $H$ by setting 
\begin{align*}
    p^0H&=H,\\
    p^{\alpha+1}H &= p(p^\alpha H),\\
    p^\alpha H &= \bigcap_{\xi<\alpha} p^\xi H \text{ if $\alpha$ is a limit ordinal.}
\end{align*}
If $H$ is countable, then there is $\tau<\omega_1$ with $p^\tau H = p^{\tau+1}H$ (and then also $p^\tau H = \Div(H)$). Such $\tau$ is called the \textbf{Ulm length of} $H$, and we will denote it by $l(H)$. For any abelian $p$-group $H$, we define $H[p]=\{h\in H: ph=0\}$. For an ordinal $\alpha<\omega_1$, we define the \textbf{$\alpha$-th Ulm invariant} $U_\alpha(H)\in \omega\cup \{\infty\}$ by 
\[
U_\alpha(H) = \dim_{\mathbb Z_p}((p^\alpha H)[p]/(p^{\alpha+1}H)[p]).
\]
The sequence $(U_\alpha(H))_{\alpha<\omega_1}$ is called the \textbf{Ulm sequence of }$H$. Note that $U_\alpha(H)=0$ for $\alpha\geq l(H)$ and that $U_\alpha(H)=U_\alpha(\Red(H))$ for $\alpha<\omega_1$.

Ulm's theorem \cite[Theorem 14]{kaplansky1969infinite} states that two countable reduced abelian $p$-groups are isomorphic if and only if their Ulm sequences are equal. Consequently, two countable abelian $p$-groups are isomorphic if and only if their Ulm sequences are equal and their maximal divisible subgroups have the same rank. 

\subsection{The Relation $F_\mathbb B$} 

Let $\mathcal L=\{+\}$ be a relational language. For $x\in X_{\mathcal L}=2^{\omega \times \omega \times \omega}$, let $H_x$ denote the $\mathcal{L}$-structure encoded by $x$, i.e., $H_x=(\omega,+_x)$, where \[k+_xm=n\iff x(k,m,n)=1.\] Let $\mathcal{T}$ be the theory of abelian $p$-groups.

\begin{defn}
    Let $\Mod(\mathcal{T})$ be topologized as a subspace of $X_{\mathcal{L}}$. Define $F_{\mathbb B}$ on $\Mod(\mathcal{T})$ by 
    \[xF_{\mathbb B}y \iff \Red(H_x)\cong \Red(H_y).\]
\end{defn}

There is a continuous map $\theta:\Mod(\mathcal{T}) \times \Mod(\mathcal{T}) \to \Mod(\mathcal{T})$ such that for all $x,y\in \Mod(\mathcal{T})$ 
\[
H_{\theta(x,y)}=H_x\oplus H_y.
\]
Indeed, we first define a group operation on $\omega^2$ by 
\[
(n,m)+_{\theta(x, y)} (k,l) = (n+_x k, m+_y l),
\]
and then we transfer this operation back to $\omega$ by a fixed bijection $\varphi:\omega\to \omega^2$.

We proceed by proving some properties of $F_\mathbb B$, following the arguments from \cite{BeckersNotes}.
\begin{prop}\label{becker-alt}
\begin{enumerate}
    \item[(i)] $\Mod(\mathcal{T})$ is an $\cong$-invariant $G_\delta$ subset of $X_{\mathcal L}$.
    \item[(ii)] The relation $F_{\mathbb B}$ is Borel reducible to $ \cong_\mathcal{L}$ (the isomorphism relation of countable $\mathcal{L}$-structures), and hence $F_\mathbb B$ is an analytic equivalence relation all of whose equivalence classes are Borel.
    \item[(iii)] The relation $F_\mathbb{B}$ is idealistic.
\end{enumerate}
\end{prop}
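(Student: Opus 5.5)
For (i), I will write membership in $\Mod(\mathcal T)$ as a countable conjunction of conditions on $x$, each of which is open or closed. The conditions are as follows. Each $+_x$ is the graph of a total function: for all $k,m$ there exists $n$ with $x(k,m,n)=1$, which is $G_\delta$, and for all $n\neq n'$ not both $x(k,m,n)=1$ and $x(k,m,n')=1$, which is closed. Associativity and commutativity are closed conditions once functionality holds, since each instance involves only finitely many coordinates. Existence of a neutral element is "$\exists e\,\forall m$" and needs care. I will handle it by noting that in a group the identity is the unique $e$ with $e+_xe=e$. So I require "$\exists e\,(x(e,e,e)=1)$" (open), "for all $e$ with $x(e,e,e)=1$ and all $m$, $x(e,m,m)=1$" (closed), and inverses "$\forall k\,\exists m\,\exists e\,(x(e,e,e)=1\wedge x(k,m,e)=1)$" ($G_\delta$). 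The $p$-group condition is "$\forall h\,\exists n\,(p^nh=e)$". Here $p^nh=e$ is witnessed by a finite chain of additions, so it is an existential (open) condition, and the whole condition is $G_\delta$. Invariance under $\cong$ is immediate, because being an abelian $p$-group is isomorphism invariant.

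For (ii), I will define a Borel map $f:\Mod(\mathcal T)\to X_{\mathcal L}$ such that $H_{f(x)}\cong \Red(H_x)$, or else some Borel-computable invariant of it. A direct approach is to compute $\Div(H_x)$. For countable $p$-groups, $\Div(H_x)=p^{l(H_x)}H_x$. Membership $h\in\Div(H_x)$ is the largest-subgroup-closed-under-division condition, which is a priori only $\Sigma^1_1$. To avoid this I will instead use the map $x\mapsto \theta(x,d)$, where $d$ codes $\mathbb Z(p^\infty)^{(\omega)}$. Since $\Div(H)\oplus \mathbb Z(p^\infty)^{(\omega)}\cong \mathbb Z(p^\infty)^{(\omega)}$, we get $H_x\oplus \mathbb Z(p^\infty)^{(\omega)}\cong \Red(H_x)\oplus \mathbb Z(p^\infty)^{(\omega)}$. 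By Ulm's theorem together with the divisible-rank invariant, $\theta(x,d)\cong\theta(y,d)$ if and only if $\Red(H_x)\cong\Red(H_y)$. Thus $x\mapsto\theta(x,d)$ is a continuous reduction of $F_{\mathbb B}$ to $\cong_{\mathcal L}$. It follows that $F_{\mathbb B}$ is analytic, being the preimage of an analytic relation. Each class is the preimage of a single $\cong_{\mathcal L}$-class, which is Borel by Scott's theorem, so each class is Borel.

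For (iii), the plan is to transfer the idealistic structure of the orbit relation $\cong_{\mathcal L}$ via the classwise structure. For an $F_{\mathbb B}$-class $C$, I define $I_C$ on $C$ by declaring $A\in I_C$ if and only if $A$ is meager in $C$ with respect to the logic action, in the following sense. Fix $x\in C$. Then $A\in I_C$ if and only if $\{(g,y): y\in C',\, g\cdot\theta(y,d)\in\ldots\}$ is meager; it is simpler to replace this by a product construction. I would choose $I_C$ so that $A\in I_C$ iff $\{g\in S_\infty: g\cdot x\in A\}$ is meager, for $x\in C$ having the \emph{same} divisible rank, and quantify over the ranks. Concretely, $C$ splits into the $\cong$-classes $C_n$, for $n\in\omega+1$ the divisible rank. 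I put $A\in I_C$ iff $A\cap C_\omega$ is in the meager ideal $I_{C_\omega}$ of the $S_\infty$-orbit $C_\omega$. This is a ccc $\sigma$-ideal, and $C\notin I_C$ since $C_\omega\neq\emptyset$. The step I expect to be the main obstacle is Borelness of $A_I$. For Borel $A\subseteq X^2$ we have $x\in A_I$ iff $\{g: (x,\, g\cdot\theta(x',d))\in A\}$ is meager, where $x'$ is any element of the class. I will take $x'=x$, using $\theta(x,d)\in C_\omega$, which holds because $\theta(x,d)F_{\mathbb B}x$ with divisible rank $\omega$. This gives $x\in A_I\iff \{g\in S_\infty:(x,g\cdot\theta(x,d))\in A\}$ is meager. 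The set $\{(x,g):(x,g\cdot\theta(x,d))\in A\}$ is Borel, and the set of $x$ whose section is meager is Borel by the Montgomery/Kechris category quantifier theorem. Finally, I need to check that meagerness of the $g$-set is independent of the representative chosen, which follows from right-invariance of category in $S_\infty$.
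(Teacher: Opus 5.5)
Your proof is correct, and parts (i) and (ii) follow the paper. For (i) the paper also counts quantifiers; it replaces the identity and inverse axioms by $\forall a\,\forall b\,\exists x\,(a+x=b)$, while you use the idempotent characterization of the identity, which works equally well. For (ii) the paper uses exactly your continuous reduction $x\mapsto\theta(x,d)$ with $H_d\cong\mathbb Z(p^\infty)^{(\omega)}$.

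The difference is in (iii). The paper makes three observations:
\begin{itemize}
\item $\theta(\cdot,d)$ selects, inside each $F_{\mathbb B}$-class, a single $\cong_{\mathcal L}$-class (the one of divisible rank $\omega$);
\item $F_{\mathbb B}\supseteq{\cong_{\mathcal L}}$;
\item $\cong_{\mathcal L}$ is idealistic, being an orbit equivalence relation.
\end{itemize}
It then cites a general transfer result, Proposition 2.4 of Calderoni--Motto Ros.

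You instead carry out that transfer by hand in this special case. You define $I_C$ as the pullback of the meager ideal of $S_\infty$ along $g\mapsto g\cdot x_0$, for $x_0$ in the rank-$\omega$ orbit $C_\omega$. You check that $I_C$ is a ccc $\sigma$-ideal with $C\notin I_C$, and that it does not depend on the choice of $x_0$, via right translation. Borelness of $A_I$ then comes from the Montgomery--Novikov category-quantifier theorem applied to the Borel set $\{(x,g):(x,g\cdot\theta(x,d))\in A\}$. This uses that $\theta(x,d)\in C_\omega$ for every $x\in C$.

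The paper's route is shorter and rests on a general principle: an idealistic subrelation together with a Borel selector of its classes makes the larger relation idealistic. Yours is self-contained and makes the witnessing ideals explicit.

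In a final write-up, drop the exploratory passages. These are the aborted ``product construction'' sentence in (iii) and the discussion of computing $\Div(H_x)$ directly in (ii); only your ``Concretely'' construction and the $\theta(\cdot,d)$ reduction are needed.
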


\begin{proof}
\begin{itemize}
    \item[(i)] Note that for an abelian group axioms of existence of the neutral element and of inverse elements can be replaced by an axiom $\forall a \forall b \exists x (ax=b)$. So the conclusion follows from counting quantifiers.
    \item[(ii)] First note that \[xF_{\mathbb B}y\iff H_x\oplus \mathbb Z(p^\infty)^{(\omega)}\cong H_y\oplus \mathbb Z(p^\infty)^{(\omega)}.\] 
    Pick $y$ such that $H_y\cong \mathbb Z(p^{\infty})^{(\omega)}$. Then $\theta(\cdot,y):\Mod(\mathcal{T})\to \Mod(\mathcal{T})$ is the desired Borel (even continuous) reduction from $F_{\mathbb B}$ to $\cong_\mathcal{L}$, and the rest of part (ii) follows from the fact that $\cong_\mathcal{L}$ is an analytic equivalence relation with Borel equivalence classes.
    \item[(iii)] Note that  for $y$ as above, $\theta(\cdot,y):\Mod(\mathcal{T})\to \Mod(\mathcal{T})$ defined above selects an $\cong_\mathcal{L}$-equivalence class within every $F_\mathbb B$-equivalence class in the sense that 
    \[
    xF_{\mathbb B} z \implies \theta(x,y)\cong_\mathcal{L} \theta(z,y)
    \]
    and
    \[
    x F_{\mathbb B} \theta(x,y).
    \]
    Then, since $\cong_\mathcal{L}$ is idealistic as an orbit equivalence relation, $F_{\mathbb B}\supseteq \cong_{\mathcal L}$, and $\theta(\cdot, y)$ is continuous, we can apply \cite[Proposition 2.4]{calderoni2025structuralresultsidealisticequivalence}. \qedhere
\end{itemize}
\end{proof}

Now we give the original definition of Becker's equivalence relation $E_{\mathbb B}$ and prove that it is classwise Borel isomorphic to $F_{\mathbb B}$. 

\begin{defn}
    Let $\widetilde{\mathcal L}=\mathcal L\cup \{a_0,a_1,\dots \}$, where $\{a_0,a_1,\dots \}$ is a countable set of constant symbols. Enumerate the $p$-group $\mathbb Z(p^\infty)^{(\omega)} = \{a_0, a_1, \dots\}$ and let $\widetilde{\mathcal{T}}=\mathcal{T}\cup Diag_{\widetilde{\mathcal L}}\left(\mathbb Z(p^\infty)^{(\omega)}\right)$, where $Diag_{\widetilde{\mathcal L}}\left(\mathbb Z(p^\infty)^{(\omega)}\right)$ denotes the atomic diagram of $\mathbb Z(p^\infty)^{(\omega)}$ as an $\widetilde{\mathcal L}$-structure. The relation $E_\mathbb B$ is defined on $\Mod(\widetilde{\mathcal T})\subseteq 2^{\omega\times \omega \times \omega}\times \omega^\omega$ by
    \[
    (x, c)E_\mathbb B (y, d)\iff x \cong_\mathcal L y.
    \] 
\end{defn}

\begin{prop}\label{prop-alt-becker}
    Relations $F_\mathbb B$ and $E_\mathbb B$ are classwise Borel isomorphic. In particular, $F_\mathbb B$ and $E_\mathbb B$ are Borel bireducible.
\end{prop}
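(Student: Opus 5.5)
We will define explicit Borel maps $f:\Mod(\mathcal T)\to\Mod(\widetilde{\mathcal T})$ and $g:\Mod(\widetilde{\mathcal T})\to\Mod(\mathcal T)$. We will then check that $f$ is a reduction of $F_{\mathbb B}$ to $E_{\mathbb B}$, that $g$ is a reduction of $E_{\mathbb B}$ to $F_{\mathbb B}$, and that the induced maps on classes are mutually inverse. The key algebraic fact is the one already used in Proposition~\ref{becker-alt}(ii). For abelian $p$-groups $H,K$ we have $H\oplus\mathbb Z(p^\infty)^{(\omega)}\cong K\oplus\mathbb Z(p^\infty)^{(\omega)}$ iff $\Red(H)\cong\Red(K)$. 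This follows from the decomposition $H\cong\Red(H)\oplus\Div(H)$ and the identity $\Div(H)\oplus\mathbb Z(p^\infty)^{(\omega)}\cong\mathbb Z(p^\infty)^{(\omega)}$.

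For $g$, we take $g(x,c)=x$, i.e.\ we forget the constants; this map is continuous. A model $(x,c)$ of $\widetilde{\mathcal T}$ is an abelian $p$-group $H_x$ together with an embedding of $\mathbb Z(p^\infty)^{(\omega)}$, given by the interpretation of the $a_i$'s. Since divisible subgroups are direct summands, $H_x\cong K\oplus\mathbb Z(p^\infty)^{(\omega)}$ for some $K$, and hence $\Div(H_x)$ has rank $\omega$. Consequently, for models of $\widetilde{\mathcal T}$, $x\cong_{\mathcal L}y$ iff $\Red(H_x)\cong\Red(H_y)$, because both groups are the sum of their reduced part and $\mathbb Z(p^\infty)^{(\omega)}$. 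This shows that $g$ is a reduction of $E_{\mathbb B}$ to $F_{\mathbb B}$.

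For $f$, fix a code $y_0$ with $H_{y_0}\cong\mathbb Z(p^\infty)^{(\omega)}$, enumerated so that the element coded by $n$ is $a_n$. We set $f(x)=(\theta(x,y_0),c_x)$, where $c_x(i)$ is the code under $\varphi$ of the pair $(0_x,i)$. Here $0_x$ is the neutral element of $H_x$, and the map $x\mapsto 0_x$ is Borel because it is the unique $n$ with $x(n,n,n)=1$. Then $c_x$ interprets the constants as the copy $\{0\}\oplus H_{y_0}$, so $f(x)\in\Mod(\widetilde{\mathcal T})$, and $f$ is Borel. By the key fact, $xF_{\mathbb B}x'$ iff $\theta(x,y_0)\cong_{\mathcal L}\theta(x',y_0)$ iff $f(x)E_{\mathbb B}f(x')$.

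It remains to check that the induced maps on quotients are inverse bijections. First, $g(f(x))=\theta(x,y_0)$ and $\Red(H_x\oplus\mathbb Z(p^\infty)^{(\omega)})\cong\Red(H_x)$, so $g(f(x))F_{\mathbb B}x$. Second, $f(g(x,c))=(\theta(x,y_0),\cdot)$ and $H_x\oplus\mathbb Z(p^\infty)^{(\omega)}\cong H_x$ because $\Div(H_x)$ already has rank $\omega$, so $f(g(x,c))E_{\mathbb B}(x,c)$. Both compositions therefore induce the identity on the respective quotients, so $\hat f$ and $\hat g$ are mutually inverse bijections, and Borel bireducibility follows immediately.

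The main obstacle is minor. It consists in verifying that the constants assignment $c_x$ is Borel and yields a genuine model of $\widetilde{\mathcal T}$, which requires checking that the atomic diagram is respected on the copy $\{0\}\oplus H_{y_0}$. It also requires justifying that every model of $\widetilde{\mathcal T}$ has $\Div$ of rank exactly $\omega$, which rests on the fact that divisible subgroups are direct summands.
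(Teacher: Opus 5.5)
Your proposal is correct and is essentially the paper's proof. You use the same maps $x\mapsto(\theta(x,y_0),(c_x(i))_i)$ with $c_x(i)=\varphi^{-1}(0_x,i)$ and $(x,c)\mapsto x$, together with the same observation that adding $\mathbb Z(p^\infty)^{(\omega)}$ turns $F_{\mathbb B}$ into isomorphism. You also spell out the points the paper calls straightforward: every model of $\widetilde{\mathcal T}$ has a maximal divisible subgroup of rank $\omega$, and both compositions induce the identity on the quotients.
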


\begin{proof}
    Define $y\in \Mod(\mathcal{T})$ by $y(n,m,k)=1\Leftrightarrow a_n+a_m=a_k$. For $x\in \Mod(\mathcal{T})$, let $e(x)\in \omega$ be the neutral element of $H_x$. We let $c_n(x) = \varphi^{-1}(e(x), n)$. Then the assignment $a_n \mapsto c_n(x), n\in \omega$ is an isomorphism. Therefore $(\theta(x, y), (c_n))\in \Mod(\widetilde{\mathcal T})$. Observe that for $x, z\in \Mod(\mathcal{T})$
    \begin{equation*}
        x F_\mathbb B z \iff (\theta(x,y), (c_n(x))) E_\mathbb B (\theta(z,y), (c_n(z))) 
    \end{equation*}
    and for $(x, c), (z, d)\in \Mod(\widetilde{\mathcal T})$
    \begin{equation*}
        (x, c) E_\mathbb B (z, d) \iff x \cong_\mathcal{L} z \iff x F_\mathbb B z.
    \end{equation*}
    It follows that the maps
    \begin{align*}
        x&\mapsto (\theta(x,y), (c_n(x))),\\
        (x, c)&\mapsto x
    \end{align*}
    are Borel (even continuous) reductions, and it is straightforward to check that their factorings witness the classwise Borel isomorphism of $F_\mathbb B$ and $E_\mathbb B$. 
\end{proof}
 
\subsection{Main Theorem}

We say that $E$ is \textbf{classwise Borel embeddable into} $F$ if there is a Borel $F$-invariant subset $A\subseteq Y$ such that $E$ is classwise Borel isomorphic to $F\restriction A$. 

\begin{thm}\label{thm-Beckers-relation}
Relations $E_{\mathbb B}$ and $F_{\mathbb B}$ are not classwise Borel embeddable into an orbit equivalence relation.  
\end{thm}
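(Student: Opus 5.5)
Since $E_{\mathbb B}$ and $F_{\mathbb B}$ are classwise Borel isomorphic by Proposition~\ref{prop-alt-becker}, it suffices to treat $F_{\mathbb B}$. I argue by contradiction: suppose $A$ is a Borel invariant subset of a standard Borel space $Y$ carrying a Borel action of a Polish group $G$, with orbit relation $E_G$, and suppose $f:\Mod(\mathcal T)\to A$, $g:A\to\Mod(\mathcal T)$ are Borel reductions whose factorings are mutually inverse bijections between $\Mod(\mathcal T)/F_{\mathbb B}$ and $A/E_G$. The plan follows Becker's strategy. Orbit equivalence relations come with a Borel-complexity stratification of their orbits (Becker--Kechris). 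The map $f$ would transfer such a stratification to $F_{\mathbb B}$, while Section~\ref{Section Borel Class of the Lusin Derivative} shows that $F_{\mathbb B}$ has no such stratification. Analytic determinacy previously entered only in this last point.

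\emph{Step 1 (orbit side).} Using the Becker--Kechris theory (the analogue of Lemma~\ref{lemma-Becker-Kechris}), I show that for each $\xi<\omega_1$ the set $Y_\xi$ of points of $A$ whose orbit is $\mathbf{\Pi}^0_\xi$ is Borel. More precisely, for a countable family of Borel sets one can uniformly decide, in a Borel way, whether an orbit is $\mathbf{\Pi}^0_\xi$ in a fixed Polish topology making the action continuous. Also, $A=\bigcup_\xi Y_\xi$ with each orbit in some $Y_\xi$, and by boundedness (the pullback $f^{-1}$ is total and Borel) the ranks of orbits in $f[\Mod(\mathcal T)]$ are bounded by some $\xi_0<\omega_1$. \emph{Step 2 (pulling back).} Since $g\circ f$ maps each $F_{\mathbb B}$-class onto itself at the level of classes, each class $[x]_{F_{\mathbb B}}=f^{-1}([f(x)]_{E_G})$ is the preimage of a $\mathbf{\Pi}^0_{\xi_0}$ orbit under a fixed Borel map of Baire class $\eta$. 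Hence all $F_{\mathbb B}$-classes are $\mathbf{\Pi}^0_{\zeta}$ for a single $\zeta<\omega_1$, and moreover uniformly so, in the sense that the set $\{(x,z): z\in[x]_{F_{\mathbb B}}\}$ has sections of bounded rank. Lemma~\ref{lemma_E0_does_not_reduce} (that $E_0$ does not reduce in the relevant way) will be used to rule out degenerate situations where $f$ collapses structure. Presumably it shows that the classes picked out are nontrivial enough for the comparison to be faithful.

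\emph{Step 3 (group side, Lemma~\ref{becker-lem}).} I show that the $F_{\mathbb B}$-classes have unbounded Borel rank. I would assign to each tree $T\in\tr$ a countable abelian $p$-group $H(T)$, continuously in $T$, with Ulm length tied to $|T|$. A natural choice is the group generated by the nodes of $T$ with relations $p\cdot s = s\restriction(\lh(s)-1)$ and $p\cdot\emptyset=0$. Then $p^\alpha H(T)$ is generated by $D^\alpha(T)$, so that $\Red(H(T))$ has Ulm length $|T|$. Using Theorem~\ref{thm-complete} and Lemma~\ref{lemma-lower-bound}, I would reduce a $\mathbf{\Pi}^0_{2\alpha+2}$-complete set into a single $F_{\mathbb B}$-class. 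Taking $\gamma=\omega\alpha$ and fixing the remaining Ulm invariants so that the reduced parts become isomorphic exactly when $|T_A(x)|=\gamma$ shows that this class is not $\mathbf{\Sigma}^0_{2\alpha+2}$. Doing this for every $\alpha$ contradicts the bound $\zeta$ from Step 2. The index $2\alpha+2$, rather than $2\alpha+1$, matters here: it must beat the complexity coming from the orbit side, which is measured in the same scale.

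The main obstacle is Step 3. The map $T\mapsto H(T)$ must control all Ulm invariants of the reduced part, not just its length. Otherwise two groups of the same Ulm length would lie in different classes and the reduction would miss a single class. I would handle this by summing with a fixed group with all Ulm invariants $\infty$ below $\gamma$. That sum swamps the invariants contributed by $H(T)$, and it differs only according to whether $U_\gamma$ is nonzero, which corresponds to $|T|=\gamma+1$ versus $|T|=\gamma$.
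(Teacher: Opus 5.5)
There is a genuine gap in Step~1, and it breaks the overall strategy. Steps~1 and~2 use only that $f$ is a Borel reduction of $F_{\mathbb B}$ to an orbit equivalence relation: the identity $[x]_{F_{\mathbb B}}=f^{-1}([f(x)]_{E_G})$ holds for every reduction, and $g$ is never used. But such a reduction exists. The map $\theta(\cdot,y)$ with $H_y\cong\mathbb Z(p^\infty)^{(\omega)}$ reduces $F_{\mathbb B}$ to $\cong_{\mathcal L}$ (Proposition~\ref{becker-alt}(ii)). So your Steps~1--3 would derive a contradiction unconditionally.

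The culprit is the bound $\xi_0$. No boundedness principle applies to Borel ranks of orbits, and here the bound is actually false. The $F_{\mathbb B}$-class of $x$ is the preimage of the $\cong_{\mathcal L}$-class of $\theta(x,y)$ under the continuous map $\theta(\cdot,y)$. Hence Lemma~\ref{becker-lem} shows that the $\cong_{\mathcal L}$-orbits meeting $\theta(\Mod(\mathcal T),y)$ have unbounded Borel rank. Unbounded rank of classes is therefore no obstruction at all. Consistently with this, your framework gives Lemma~\ref{lemma_E0_does_not_reduce} no real role, and it would make the difference between the indices $2\alpha+1$ and $2\alpha+2$ irrelevant.

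What is missing is a \emph{local} comparison that genuinely uses $g$ and $\mathbb E_0\not\le_B F_{\mathbb B}$. On the orbit side, the tool is the Becker--Kechris relativized Glimm--Effros result (Corollary 5.1.10 of their monograph). If $E$ comes from a continuous Polish group action and $\mathbb E_0\not\le_B E$, then every non-empty $E$-invariant $\mathbf{\Pi}^0_{\beta+1}$ set ($\beta$ limit) contains a $\mathbf{\Pi}^0_{\beta+1}$ orbit. Here $\mathbb E_0\not\le_B E_G$ follows from Lemma~\ref{lemma_E0_does_not_reduce}, because $g$ reduces $E_G$ to $F_{\mathbb B}$.

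The argument then runs as follows:
\begin{enumerate}
\item Choose $\alpha$ with $f$ and $g$ both $\mathbf{\Sigma}^0_\alpha$-measurable, and a limit $\beta\ge\alpha\omega$. By Lemma~\ref{simple-lemma}, preimages under $f$ and $g$ preserve $\mathbf{\Pi}^0_{\beta+1}$.
\item Then $g^{-1}(Y(\beta))$ is a non-empty invariant $\mathbf{\Pi}^0_{\beta+1}$ set, so it contains a $\mathbf{\Pi}^0_{\beta+1}$ orbit $O$.
\item Finally $f^{-1}(O)$ is a $\mathbf{\Pi}^0_{\beta+1}$ $F_{\mathbb B}$-class inside $Y(\beta)$.
\end{enumerate}

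Accordingly, Step~3 must deliver more than unbounded ranks. For unboundedly many limit $\beta$ you need a non-empty invariant $\mathbf{\Pi}^0_{\beta+1}$ set $Y(\beta)$ such that \emph{no} class inside it is $\mathbf{\Pi}^0_{\beta+1}$. The paper takes $\beta=\omega\beta$ and
\[
Y(\beta)=\{x: l(H_x)\ge\beta \text{ and } U_\alpha(H_x)=\infty \text{ for all } \alpha<\beta\},
\]
which is $\mathbf{\Pi}^0_\beta$ by a quantifier count. It then shows that every class in $Y(\beta)$ is $\mathbf{\Pi}^0_{\beta+2}$-hard; note $2\beta+2=\beta+2$ since $\beta=\omega\beta$. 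This is exactly where the index $2\alpha+2$ rather than $2\alpha+1$ is needed.

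Your device is the right one for this hardness: add a group whose invariants are $\infty$ below $\gamma$, and detect $U_\gamma\neq 0$ via Lemma~\ref{lemma-lower-bound}. But it must be run for an \emph{arbitrary} class $C\subseteq Y(\beta)$, not merely to exhibit some class of high rank. Use some $y\in C$ as the summand, and take $\gamma\ge\beta$ to be the least ordinal with $U_\gamma(H_y)<\infty$.
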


By Proposition~\ref{prop-alt-becker} it suffices to prove Theorem~\ref{thm-Beckers-relation} for $F_{\mathbb B}$, which follows immediately from combining Lemmas~\ref{lemma-Becker-Kechris}, \ref{lemma_E0_does_not_reduce}, and \ref{becker-lem-nonexplicit}.

The general Lemma~\ref{lemma-Becker-Kechris} below  is isolated from arguments of Becker \cite{BeckersNotes} and Calderoni and Motto Ros \cite{calderoni2025structuralresultsidealisticequivalence}.
The relation of eventual equality on $2^\omega$, denoted by $\mathbb E_0$, is defined by \[x\mathbb E_0 y\iff \exists k\forall n\geq k\ ( x(n)=y(n)).\]

\begin{lemma}\label{lemma-Becker-Kechris}
    Let $X$ be a Polish space and $F$ be an equivalence relation on $X$. Assume that 
    \begin{itemize}
        \item[(i)] $\mathbb E_0\not\leq_B F$;
        \item[(ii)] for uncountably many limit ordinals $\beta<\omega_1$, there is a non-empty $\mathbf{\Pi}^0_{\beta+1}$ $F$-invariant set $Y(\beta)$ with no $\mathbf{\Pi}^0_{\beta+1}$ $F$-class in $Y(\beta)$.
    \end{itemize} 
    Then $F$ is not classwise Borel embeddable into any orbit equivalence relation. 
\end{lemma}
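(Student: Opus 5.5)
Suppose toward a contradiction that $F$ is classwise Borel isomorphic to $E\restriction A$, where $E$ is an orbit equivalence relation induced by a Borel action of a Polish group $H$ on a standard Borel space $Z$ and $A\subseteq Z$ is Borel and $E$-invariant. Let $f:X\to A$ and $g:A\to X$ be the Borel reductions, with $g$ a classwise Borel inverse of $f$. The plan uses two classical facts about orbit equivalence relations. The first is the Becker--Kechris theorem: after changing the topology, we may assume $Z$ is Polish and the action continuous. The second is Sami's result: for a continuous action of a Polish group on a Polish space, every orbit is Borel, and the ranks of the orbits are bounded in a way tied to the action. The sharper statement we need is Becker's and Sami's: if $B$ is an invariant $\mathbf{\Pi}^0_{\beta+1}$ set, for $\beta$ limit and large enough relative to the coding, then either $B$ contains a $\mathbf{\Pi}^0_{\beta+1}$ orbit, or $E\restriction B$ admits a continuous embedding of $\mathbb E_0$. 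Equivalently, "no orbit of low complexity" forces $\mathbb E_0$ below it. We will combine this dichotomy with hypothesis (i).

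The key steps are as follows. First, $f$ and $g$ are Borel, hence $\mathbf{\Sigma}^0_{\xi_0}$-measurable for some countable $\xi_0$, and $A$ is $\mathbf{\Pi}^0_{\xi_0}$ for the Polish topology on $Z$ making the action continuous. Because hypothesis (ii) supplies uncountably many $\beta$, we can pick a limit $\beta$ with $\xi_0\cdot\omega\le\beta$; then composing with $f$ or $g$ preserves $\mathbf{\Pi}^0_{\beta+1}$ (since $\xi_0+\beta=\beta$). Second, transfer $Y(\beta)$ to $Z$ by setting $B=g^{-1}(Y(\beta))\subseteq A$. This set is $E$-invariant, because $g$ is a reduction and $Y(\beta)$ is $F$-invariant. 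It is also $\mathbf{\Pi}^0_{\beta+1}$, because $g$ is $\mathbf{\Sigma}^0_{\xi_0}$-measurable. Finally it is nonempty, because $\hat g$ is surjective.

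Third, note that $B$ contains no $\mathbf{\Pi}^0_{\beta+1}$ orbit. Indeed, if $[z]_E\subseteq B$ were $\mathbf{\Pi}^0_{\beta+1}$, then $f^{-1}([z]_E)$ would be an $F$-class, since $\hat f$ is a bijection with inverse $\hat g$. It would lie in $Y(\beta)$, and it would be $\mathbf{\Pi}^0_{\beta+1}$, contradicting (ii).

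Fourth, apply the dichotomy for orbit equivalence relations on the invariant $\mathbf{\Pi}^0_{\beta+1}$ set $B$ to obtain a Borel reduction $\mathbb E_0\le_B E\restriction B$. Composing with $g$ then gives $\mathbb E_0\le_B F$, contradicting (i).

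The main obstacle is the fourth step, which needs the precise form of the dichotomy. A natural route is the Hjorth/Becker argument via Vaught transforms. Refine the topology so that $B$ is Polish, using $\beta$ large compared to the complexity of $A$ and of the action. If every orbit in $B$ is nonmeager in its own closure, Effros-type arguments show that orbits have low complexity. Otherwise some orbit closure restricted to $B$ is a space where $E$ is generically ergodic and meager, and the Becker--Kechris version of Glimm--Effros gives $\mathbb E_0$. The care needed is in verifying that "orbit of complexity $>\mathbf{\Pi}^0_{\beta+1}$" really yields meager orbits in a suitable finer topology of rank below $\beta$. This is where the requirement that $\beta$ be limit and large is used, and I would cite the Becker--Calderoni--Motto Ros formulation rather than reprove it.
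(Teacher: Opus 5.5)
Your proposal is correct and is essentially the paper's proof. The common steps are: reduce to a continuous action on a Polish space, choose a limit $\beta\ge\xi_0\omega$ so that $f$ and $g$ preserve $\mathbf{\Pi}^0_{\beta+1}$, and pull $Y(\beta)$ back along $g$. You then apply the orbit dichotomy contrapositively to get $\mathbb E_0\le_B F$, while the paper uses $\mathbb E_0\not\le_B E\restriction g^{-1}(Y(\beta))$ to obtain a $\mathbf{\Pi}^0_{\beta+1}$ class; the paper simply cites that dichotomy as Becker--Kechris, Corollary 5.1.10, so the sketch in your last paragraph is not needed.
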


The following two lemmas concern the equivalence relation $F_{\mathbb B}$. 

\begin{lemma}\label{lemma_E0_does_not_reduce}
    $\mathbb{E}_0 \not\leq_B F_\mathbb B$.
\end{lemma}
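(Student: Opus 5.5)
Suppose toward a contradiction that $f:2^\omega\to\Mod(\mathcal T)$ is a Borel reduction of $\mathbb E_0$ to $F_{\mathbb B}$. The plan is to run the standard argument that isomorphism of countable abelian $p$-groups is essentially classified by countable invariants (the Ulm sequence), and to show that any Borel homomorphism from $\mathbb E_0$ to such an invariant map must be constant on a comeager set. Concretely, I would consider the map $x\mapsto (U_\alpha(H_{f(x)}))_{\alpha<\omega_1}$. Since $U_\alpha(H)=U_\alpha(\Red(H))$ and, by Ulm's theorem, $\Red(H_x)\cong\Red(H_y)$ iff the Ulm sequences agree, we get $x\,\mathbb E_0\,y \iff \forall\alpha\ U_\alpha(H_{f(x)})=U_\alpha(H_{f(y)})$.

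The key steps, in order, are as follows. First, by boundedness for analytic sets, the Ulm lengths $l(H_{f(x)})$ for $x\in 2^\omega$ are bounded by some countable $\tau$. The reason is that the map $x\mapsto \Red(H_{f(x)})$ has analytic range, and the set of reduced groups of Ulm length $\ge\tau$ for all $\tau<\omega_1$ is not analytic-bounded. More carefully, I would apply the boundedness theorem to the $\mathbf\Pi^1_1$-rank given by Ulm length, or note that $\{x: l(H_{f(x)})\ge\alpha\}$ is Borel for each $\alpha$ and that Ulm length is a $\mathbf\Pi^1_1$-rank on a suitable coanalytic set; alternatively, I would use a Borel reduction of the relevant group structure to well-orders. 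Second, for each $\alpha<\tau$ and each $n\in\omega\cup\{\infty\}$, the set $\{x: U_\alpha(H_{f(x)})=n\}$ is Borel (by induction, the subgroups $p^\alpha H$ are Borel-definable in the code) and $\mathbb E_0$-invariant. Third, by the generic ergodicity of $\mathbb E_0$ (the 0–1 law: every $\mathbb E_0$-invariant Borel set is meager or comeager), each such set is meager or comeager. Hence for each of the countably many $\alpha<\tau$ there is a single value $n_\alpha$ that is attained on a comeager set. Intersecting these countably many comeager sets gives a comeager set on which all Ulm sequences coincide. Consequently all points of that set are $\mathbb E_0$-equivalent, which is impossible since $\mathbb E_0$-classes are countable and a comeager subset of $2^\omega$ is uncountable.

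The main obstacle is the first step, namely the bound on Ulm lengths over the range of $f$. Without it, there are uncountably many coordinates and the countable intersection argument fails. For this I would first try to show that the set $R=\{z\in\Mod(\mathcal T): l(H_z)\}$ is governed by a $\mathbf\Pi^1_1$-rank. Specifically, $z\mapsto l(H_z)$ should be a rank map whose initial segments $\{z: l(H_z)\le\alpha\}$ are Borel uniformly. I would then apply the boundedness principle for analytic subsets of the domain of a $\mathbf\Pi^1_1$-rank, passing through the reduced part if needed. As a fallback, I would avoid boundedness altogether. Since the comeager-constancy argument works for each $\alpha$ separately, I would instead use the fact that the isomorphism relation restricted to the analytic range is Borel. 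That is, the $\cong$-saturation of an analytic set of groups meets only boundedly many Ulm lengths, by the known result that $\cong$ on countable abelian $p$-groups of Ulm length $<\tau$ is Borel and the Ulm length is unbounded on non-Borel sets.
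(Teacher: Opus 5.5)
There is a genuine gap in your first step, which you yourself flag as the main obstacle. None of your sketches proves that $l(H_{f(x)})$ is bounded by a countable $\tau$ for \emph{all} $x\in 2^\omega$.

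\begin{itemize}
\item The map $z\mapsto l(H_z)=|\Tree(z)|$ is a $\mathbf\Pi^1_1$-rank only on the coanalytic set of codes of \emph{reduced} groups, i.e.\ those $z$ with $\Tree(z)\in\wf$. Nothing forces the range of $f$ into that set: $F_{\mathbb B}$ ignores divisible parts, so $f$ may take values such as $G\oplus\mathbb Z(p^\infty)$.
\item On non-reduced groups boundedness simply fails. Since $l(G\oplus D)=l(G)$ for divisible $D$, the analytic set $\{z:\Tree(z)\in\ilf\}$ realizes every countable Ulm length, and so does the Borel set $\Mod(\mathcal T)$ itself. This also refutes your fallback claim that the $\cong$-saturation of an analytic set meets only boundedly many Ulm lengths.
\item You cannot pass to the reduced part. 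A Borel map $z\mapsto(\text{code of }\Red(H_z))$ would have analytic range consisting of reduced groups of unbounded length, which contradicts boundedness.
\item Most decisively, your Step~1 arguments never use that $f$ is a reduction, and for Borel homomorphisms the conclusion is false. Take a perfect set $P$ of pairwise $\mathbb E_0$-inequivalent points, the Borel selector $r:[P]_{\mathbb E_0}\to P$, and a continuous surjection $c:P\to\tr$. Then $x\mapsto g(c(r(x)))$, extended by a constant off $[P]_{\mathbb E_0}$, is a Borel homomorphism from $\mathbb E_0$ to $F_{\mathbb B}$ with unbounded Ulm lengths.
\end{itemize}

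What is true, and what your Steps 2--3 actually need, is boundedness on a \emph{comeager} set. This comes from Baire category rather than from boundedness; it is the content of Lemma~\ref{lemma-comeager} in the paper.

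\begin{itemize}
\item The prewellordering $x\preceq y\iff l(H_{f(x)})\le l(H_{f(y)})$ is the preimage under the Borel map $x\mapsto \Tree(f(x))$ (applied coordinatewise) of $\{(S,T):|S|\le|T|\}$.
\item That set can be written using comparisons of the $\mathbf\Pi^1_1$-rank on the well-founded subtrees $S_s$, $T_t$. So it lies in the $\sigma$-algebra generated by analytic sets, and so does $\preceq$; in particular $\preceq$ has the Baire property.
\item By Kuratowski--Ulam, not every initial segment of $\preceq$ can be meager. Hence some Borel, $\mathbb E_0$-invariant set $\{x: l(H_{f(x)})<\xi\}$ is non-meager, and therefore comeager by the $0$--$1$ law.
\end{itemize}

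Intersect this set with your comeager sets $\{x:U_\alpha(H_{f(x)})=n_\alpha\}$ for $\alpha<\xi$. The result is a comeager set inside a single $\mathbb E_0$-class, and the rest of your argument goes through exactly as in the paper.
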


\begin{lemma}\label{becker-lem-nonexplicit}
    For uncountably many limit ordinals $\beta<\omega_1$, there is a non-empty $\mathbf{\Pi}^0_{\beta+1}$ $F_{\mathbb B}$-invariant set $Y(\beta)$ with no $\mathbf{\Pi}^0_{\beta+1}$ $F_{\mathbb B}$-class in $Y(\beta)$. 
\end{lemma}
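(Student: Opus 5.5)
We need to prove Lemma~\ref{becker-lem-nonexplicit}. The plan is to take $\beta$ of the form $\beta=2\lambda$ for a limit ordinal $\lambda<\omega_1$; for limit $\lambda$ we have $2\lambda=\lambda$, so this gives uncountably many limit ordinals $\beta$. For such $\beta$ we set
\[
Y(\beta)=\{x\in\Mod(\mathcal T): l(\Red(H_x))\le \omega\lambda\}\quad\text{or}\quad \{x: l(H_x)<\omega\lambda+1\}.
\]
Ulm length is an invariant of $\Red(H_x)$, so $Y(\beta)$ is $F_{\mathbb B}$-invariant. It is non-empty.

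The first step is to transfer the upper bounds of Lemma~\ref{D-upper-bd} and Lemma~\ref{lemma-upper-bounds} to groups. We would verify by the same induction that $\{x: h\in p^{\omega\alpha}H_x\}$ is $\mathbf{\Pi}^0_{2\alpha}$ and $\{x: h\in p^{\omega\alpha+k}H_x\}$ is $\mathbf{\Sigma}^0_{2\alpha+1}$. Then $l(H_x)\le\omega\lambda$ iff $\forall h\,(h\in p^{\omega\lambda}H_x\to \exists h'\in p^{\omega\lambda}H_x\ ph'=h)$, which is a $\mathbf{\Pi}^0_{2\lambda+2}$ condition. This is too much by one level, so we instead use the condition "$U_{\omega\lambda+k}(H_x)=0$ for all $k$", or choose $Y(\beta)$ more cleverly. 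The parity gap in Theorem~\ref{thm-complete} suggests the following cleaner choice: $\beta+1=2\lambda+1$, and the set $\{x : l(H_x)<\omega\lambda\}$ is $\mathbf{\Sigma}^0_{2\lambda}$ by the analogue of Lemma~\ref{lemma-upper-bounds}(i). Its complement intersected with a suitable $\mathbf{\Pi}^0_{2\lambda+1}$ condition gives $Y(\beta)$. Concretely, we would take $Y(\beta)$ to be all $x$ with $l(\Red H_x)\ge\omega\lambda$ and $p^{\omega\lambda}\Red(H_x)=0$, expressed as "$\forall h\,(h\in p^{\omega\lambda}H_x\to h$ divisible in $H_x$)". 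Since $h\in p^{\omega\lambda}H_x$ is $\mathbf{\Pi}^0_{2\lambda}$ and divisibility of $h$ in $p^{\omega\lambda}H_x$ needs care, we would use the fact that $h\in\Div(H_x)$ iff $h\in p^{\omega\lambda\cdot 2}H_x$ is not needed. Instead we would note that on this set $p^{\omega\lambda}H_x=\Div(H_x)$, and write the condition as $\forall h\,\forall n\,(h\in p^{\omega\lambda}H\to\exists h'\,(h'\in p^{\omega\lambda}H\wedge p h'=h))$. That is $\forall(\mathbf{\Sigma}^0_{2\lambda}\vee\exists \mathbf{\Pi}^0_{2\lambda})$, which is $\mathbf{\Pi}^0_{2\lambda+2}$ in general. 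This is the main obstacle: we must get a genuinely $\mathbf{\Pi}^0_{\beta+1}$ description. The approach we would try first is to use that for limit $\lambda$, $p^{\omega\lambda}H=\bigcap_{\alpha<\lambda}p^{\omega\alpha}H$, together with the lemma (Kaplansky) that a countable group of length $\le\omega\lambda$ with $\lambda$ limit needs no extra divisibility check. Equivalently, we intersect $\mathbf{\Pi}^0_{2\alpha+2}$ conditions "$U_{\omega\alpha+k}$-type constraints" over $\alpha<\lambda$, which yields $\mathbf{\Pi}^0_{2\lambda}\subseteq\mathbf{\Pi}^0_{\beta+1}$. We would then work with $Y(\beta)=\{x: l(\Red H_x)\le\omega\lambda\}$ defined through such levelwise conditions.

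The second step is the lower bound: each class in $Y(\beta)$ is not $\mathbf{\Pi}^0_{\beta+1}$. A class in $Y(\beta)$ is determined by a reduced group $G$ of length $\le\omega\lambda$. We would use the tree-to-group assignment of Subsection~\ref{subsection-tr-gp}, a continuous map $T\mapsto H(T)$ sending trees of Lusin height $\gamma$ to groups of Ulm length $\gamma$ in a controlled way. Composing it with the reductions $T_A$ of Lemma~\ref{lemma-lower-bound}, and forming a direct sum with a fixed group realizing the Ulm invariants of $G$ through $\theta$, we would Wadge-reduce a $\mathbf{\Pi}^0_{2\lambda+2}$-complete set, or a $\mathbf{\Sigma}^0_{2\lambda+1}$-complete set via Lemma~\ref{main-lem}, to the class of $G$ relative to $Y(\beta)$. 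The class of $G$ would then be $\mathbf{\Pi}^0_{2\lambda+1}$-hard in a way that excludes being $\mathbf{\Pi}^0_{\beta+1}$. The expected difficulty is arranging that the Ulm invariants of the constructed groups agree with those of $G$ exactly when $x\in A$. We would handle this by using $U_\alpha=\infty$ saturation, adding $G\oplus\bigoplus$ of copies of the constructed groups so that all invariants below the length become $\infty$, and restricting to classes with all invariants $\infty$ only after showing every class can be reduced to that case.
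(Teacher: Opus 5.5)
\textbf{There is a genuine gap.} The set $Y(\beta)$ you end up with is not $\mathbf{\Pi}^0_{\beta+1}$, and it also contains classes that are far too simple.

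Since $g$ is continuous and $l(G(T))=|T|$ (Lemma~\ref{lemma-tree-ulm-length}), the preimage under $g$ of $\{x: l(H_x)\le\omega\lambda\}$ is $\lht_{<\omega\lambda+1}$. The preimage of $\{x: l(H_x)=\omega\lambda\}$ is $\lht_{=\omega\lambda}$. By Theorem~\ref{thm-complete} both are $\mathbf{\Pi}^0_{2\lambda+2}$-complete, and $2\lambda+2=\beta+2$. So your first estimate was already sharp. No ``Kaplansky lemma'' can bring the condition $l(H_x)\le\omega\lambda$ down to $\mathbf{\Pi}^0_{2\lambda}$, since that would contradict Theorem~\ref{thm-complete}. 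This is exactly the kind of underestimate the paper warns about.

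Independently of complexity, any $Y$ defined by an upper bound on the length contains, for example, the class of divisible groups $\{x:\forall h\,\exists h'\,(ph'=h)\}$, which is $\mathbf{\Pi}^0_2$. No reduction can make such a class hard, so the plan of ``reducing every class to the all-$\infty$ case'' cannot succeed. Note also that $\mathbf{\Pi}^0_{2\lambda+1}$-hardness would not exclude a class from $\mathbf{\Pi}^0_{\beta+1}$, because $2\lambda+1=\beta+1$. You need $\mathbf{\Pi}^0_{\beta+2}$-hardness.

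\textbf{The missing idea} is to bound the length from \emph{below} and to build the saturation into the definition of $Y$.
\begin{enumerate}
\item You correctly observed that $\{x: l(H_x)\ge\omega\lambda\}$ is $\mathbf{\Pi}^0_{2\lambda}$. Intersect it with $\{x: U_\alpha(H_x)=\infty \text{ for all } \alpha<\omega\lambda\}$. This is also $\mathbf{\Pi}^0_{2\lambda}$, because $U_{\omega\alpha+k}(H_x)\ge N$ is a $\mathbf{\Sigma}^0_{2\alpha+2}$ condition. The paper does this for $\beta=\omega\beta$, where the threshold $\omega\beta$ is just $\beta$.
\item Every class $C$ in this set has length $l_C\ge\omega\lambda$. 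If all invariants below $l_C$ are $\infty$, then $C=K^{l_C}$, which is $\mathbf{\Pi}^0_{\beta+2}$-hard by Lemma~\ref{lemma-tilde-hard}.
\item Otherwise, the least $\gamma<l_C$ with $U_\gamma<\infty$ satisfies $\gamma\ge\omega\lambda$. So Lemma~\ref{lemma-lower-bound} applies at level $\lambda$ with this $\gamma$. For $y\in C$, the map $x\mapsto\theta(g(T_A(x)),y)$ reduces an arbitrary $\mathbf{\Pi}^0_{\beta+2}$ set $A$ to $C$:
\begin{enumerate}
\item a summand of length $\gamma$ changes no Ulm invariant of $H_y$, since those below $\gamma$ are $\infty$;
\item a summand of length $\gamma+1$ has $U_\gamma\neq 0$, so it strictly increases the finite value $U_\gamma(H_y)$.
\end{enumerate}
\end{enumerate}
This saturation below the first finite invariant is what makes ``the invariants agree exactly when $x\in A$'' automatic. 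That was the difficulty you identified but did not resolve.
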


Lemmas~\ref{lemma-Becker-Kechris} and \ref{lemma_E0_does_not_reduce} are proved in Subsections~\ref{Proofs-of-Lemmas} and \ref{subsection lemma E0}, respectively. Lemma~\ref{becker-lem-nonexplicit}  is proved in Subsection~\ref{subsection-tr-gp}; in fact, we prove a more explicit version of this result as Lemma~\ref{becker-lem}. 

\begin{rem}\label{rem-Ulm-classifiability}
    Denote by $\wo$ the set of all well orders on $\omega$ and let $X\subseteq 2^\omega$ be a Polish space. We say that an assignment $f:X\to 2^{<\omega_1}$ is \textbf{$C$-measurable in the codes} if there exists a map $f^*:X\to \wo^\omega$ such that $f^*$ is $C$-measurable (i.e., measurable with respect to the smallest $\sigma$-algebra that contains all Borel sets and is closed under the Souslin operation) and for $x\in X$ 
    \[
    f(x) = \{\type(f^*(x)(n)): n\in \omega\},
    \]
    where for $w\in \wo$, $\type (w)$ is the countable ordinal isomorphic to $w$. 

    Consider the following condition on an equivalence relation $F$ on a Polish space $X$:
    \begin{itemize}
        \item[(i')] There exists a map $f:X\to 2^{<\omega_1}$ which is $C$-measurable in the codes and such that $xFy \Leftrightarrow f(x)=f(y)$.
    \end{itemize}
    
    An Ulm sequence $(U_\alpha(H))$ of a countable abelian $p$-group $H$ takes values in $\omega\cup \{\infty\}$ and is eventually zero. Given an Ulm sequence $(U_\alpha(H))$ we can define $(U'_\alpha(H))\in 2^{<\omega_1}$ in the following way: for countable $\beta$ we let $U'_{\omega\beta}(H)=1$ if $U_\beta(H) = \infty$ and for $n>0$, $U'_{\omega\beta+n}(H) = 1$ if $U_\beta(H) = n-1$. The map $x\mapsto \Ulm(x)=(U'_\alpha(H_x))$ satisfies condition (i') with $X=\Mod(\mathcal{T})$ and $F=F_{\mathbb B}$, therefore (using \cite[Theorem 3.4.4]{Becker_Kechris_1996}) we can prove a version of Lemma~\ref{lemma-Becker-Kechris} with (i) replaced by (i') and apply it to prove Theorem~\ref{thm-Beckers-relation}. To see that the map $x\mapsto \Ulm(x)$ is $C$-measurable in the codes, let us define $\Ulm^*: \Mod(\mathcal{T})\to \wo^\omega$. For a countable abelian $p$-group $H$ and $h\in H$, we define the \textbf{height of $h$ in $H$} (denoted by $\htt_H(h))$ as the unique $\alpha<\omega_1$ with $h\in p^\alpha H \setminus p^{\alpha+1}H$ if it exists and $\infty$ otherwise (we set $\alpha<\infty$ for any ordinal $\alpha$). This gives us a preordering $\leq_x$ on $\omega$ for each $x\in \Mod(\mathcal{T})$
    \[
    n\leq_x m \iff \htt_{H_x}(n)\leq \htt_{H_x}(m).
    \]
    Fix a well order $\prec$ of $\omega\times \omega$ such that $\type(\prec)=\omega$. For each $\alpha<l(H_x)$, we let $m(\alpha)\in \omega$ be minimal (with respect to the natural order) such that $\htt_{H_x}(m(\alpha))=\alpha$. Then we define 
    \[
    V_x = \{(m(\alpha), k): \alpha<l(H_x) \land U'_{\omega \alpha+k}(H_x)=1\}
    \]
    and let $\{(m_n,k_n)\}_{n \in \omega}$ be a $\prec$-increasing enumeration of $V_x$. Note that the order $\leq_x$ restricted to $\{m(\alpha): \alpha<\htt_{H_x}(m_n)\}$ is a well order of type $\htt_{H_x}(m_n)$. We define $\Ulm^*(x)(n)$ to be a well order of $\omega$ of type $\omega \htt_{H_x}(m_n)+k_n$ obtained from $V_x$ and $\leq_x$ in a continuous way. The main part of proving that $\Ulm^*$ is a $C$-measurable assignment is showing that $x\mapsto \leq_x$ is $C$-measurable, which is done by constructing a continuous reduction $(x, n)\mapsto T(x,n)\in \tr$ such that 
    \[
    \htt_{H_x}(n)\leq \htt_{H_x}(m)\iff |T(x, n)|\leq |T(x, m)|.
    \]
    A similar argument is used in the proof of Lemma~\ref{lemma-comeager}. We skip the details here. 
    
    This remark corresponds to the Glimm--Effros-type dichotomy which states that conditions (i) and (i') are equivalent if $F$ is an orbit equivalence relation \cite[Theorem 3.4.4]{Becker_Kechris_1996}. Note however that $F_{\mathbb B}$ is not an orbit equivalence relation by Theorem~\ref{thm-Beckers-relation}. It should also be noted that in the statement of the dichotomy, condition "$\mathbb E_0 \not\leq_B F$" is replaced by "$\mathbb E_0 \not\sqsubseteq_c F"$ (where the latter means "there is no continuous embedding of $\mathbb E_0$ into $F$"), but those are equivalent for any $F$. Indeed, if $f:2^\omega\to X$ witnesses $\mathbb E_0\leq_B F$, then it also witnesses $\mathbb E_0\leq_B F'$, where $F'=(f\times f)(\mathbb E_0)\cup \{(x, x): x\in X\}$ is Borel, and by applying Harrington--Kechris--Louveau dichotomy \cite[Theorem 1.1]{Harrington} we then have $\mathbb E_0\sqsubseteq_c F'$, and so also $E_0\sqsubseteq_c F$. 
\end{rem}

\subsection{Proof of Lemma~\ref{lemma-Becker-Kechris}}\label{Proofs-of-Lemmas}

Before we proceed to prove Lemma~\ref{lemma-Becker-Kechris}, we prove the following simple fact about $\mathbf{\Sigma}^0_\alpha$-measurable functions. The proof is the same as in \cite[Proof of Lemma 3.13]{calderoni2025structuralresultsidealisticequivalence}, but we include it here for the sake of completeness.

\begin{lemma}\label{simple-lemma}
    Let $X, Y$ be Polish spaces, $\alpha<\omega_1$ and $f:X\to Y$ be a $\mathbf{\Sigma}_\alpha^0$-measurable function. For every $\beta\geq\alpha\omega$, $f^{-1}(A)\in \mathbf{\Sigma}^0_\beta(X)$ whenever $A\in \mathbf{\Sigma}^0_\beta(Y)$. 
\end{lemma}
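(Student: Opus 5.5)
We argue by induction on $\beta\geq\alpha\omega$. The key observation is that $\alpha\omega$ is closed under the relevant ordinal arithmetic: if $\xi<\alpha\omega$ and $\eta<\alpha\omega$, then $\xi+\eta<\alpha\omega$ (for $\alpha\geq 1$; the case $\alpha=0$ is vacuous or trivial depending on conventions). First we prove an auxiliary claim for small levels. For every $\xi$ with $1\leq\xi<\omega$ and every $A\in\mathbf{\Sigma}^0_\xi(Y)$, we have $f^{-1}(A)\in\mathbf{\Sigma}^0_{\alpha+\xi-1}(X)$, and similarly $\mathbf{\Pi}^0_\xi$ pulls back into $\mathbf{\Pi}^0_{\alpha+\xi-1}$. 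This goes by induction on $\xi$. For $\xi=1$ it is the hypothesis, and closed sets pull back to $\mathbf{\Pi}^0_\alpha$ sets by complementation. For the successor step, write $A=\bigcup_n (Y\setminus A_n)$ with $A_n\in\mathbf{\Sigma}^0_\xi$. Then $f^{-1}(A)=\bigcup_n (X\setminus f^{-1}(A_n))$ is a countable union of $\mathbf{\Pi}^0_{\alpha+\xi-1}$ sets, hence lies in $\mathbf{\Sigma}^0_{\alpha+\xi}$. In fact this argument runs through all countable $\xi$: one proves in general that $f^{-1}(\mathbf{\Sigma}^0_{1+\xi})\subseteq\mathbf{\Sigma}^0_{\alpha+\xi}$, using unions of preimages of lower-level sets at limit stages.

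Next we derive the statement. We show by induction on $\gamma$ that $f^{-1}(\mathbf{\Sigma}^0_{1+\gamma}(Y))\subseteq\mathbf{\Sigma}^0_{\alpha+\gamma}(X)$ for all countable $\gamma$. The successor step is the one given above. For limit $\gamma$, a $\mathbf{\Sigma}^0_{1+\gamma}$ set is $\bigcup_n (Y\setminus A_n)$ with $A_n\in\mathbf{\Sigma}^0_{1+\gamma_n}$ and $\gamma_n<\gamma$. The preimage of each $Y\setminus A_n$ is then in $\mathbf{\Pi}^0_{\alpha+\gamma_n}$ with $\alpha+\gamma_n<\alpha+\gamma$, so the union lies in $\mathbf{\Sigma}^0_{\alpha+\gamma}$.

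Finally, let $\beta\geq\alpha\omega$ and write $\beta=1+\gamma$. If $\beta$ is infinite, then $\gamma=\beta$, and we need $\alpha+\beta\leq\beta$, which amounts to $\alpha+\beta=\beta$. This holds because $\beta\geq\alpha\omega$: write $\beta=\alpha\omega+\delta$, and then $\alpha+\alpha\omega=\alpha\omega$ since $\alpha+\alpha\cdot n=\alpha\cdot(n+1)$ and we pass to the supremum. If $\beta$ is finite, then $\alpha\omega\leq\beta$ forces $\alpha=0$, which is a degenerate case (with the usual convention $\alpha\geq 1$ it cannot occur). Hence $f^{-1}(A)\in\mathbf{\Sigma}^0_{\alpha+\beta}=\mathbf{\Sigma}^0_\beta$. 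The main points of care are the ordinal bookkeeping, namely the shift between $1+\gamma$ and $\gamma$ and the identity $\alpha+\alpha\omega=\alpha\omega$, together with the limit stage of the induction; everything else is routine.
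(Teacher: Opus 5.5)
Your proof is correct and rests on the same key claim as the paper's, namely $f^{-1}(\mathbf{\Sigma}^0_{1+\gamma}(Y))\subseteq\mathbf{\Sigma}^0_{\alpha+\gamma}(X)$ for all countable $\gamma$, proved by induction on $\gamma$. The only difference is the final step. The paper runs a second transfinite induction on $\beta\geq\alpha\omega$, starting from $\beta=\alpha\omega$. You instead apply the claim directly with $\gamma=\beta$ and use the absorption identity $\alpha+\beta=\beta$ for $\beta\geq\alpha\omega$. This is slightly shorter, and it makes your opening remarks about induction on $\beta$ and about $\alpha\omega$ being closed under addition unnecessary.
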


\begin{proof}
    First, we inductively prove that, for each $\gamma<\omega_1$,  
    \begin{equation}\label{E:sat}
        A\in \mathbf{\Sigma}^0_{1+\gamma}(Y)\implies f^{-1}(A)\in \mathbf{\Sigma}^0_{\alpha+\gamma}(X).
    \end{equation} 
    For $\gamma=0$, \eqref{E:sat} follows from $f$ being $\mathbf{\Sigma}^0_\alpha$-measurable. Fix some $\gamma<\omega_1$ and assume that \eqref{E:sat} holds for all $\delta<\gamma$. Let $A\in \mathbf{\Sigma}^0_{1+\gamma}(Y)$. Then for some $B_n\in \mathbf{\Sigma}^0_{1+\gamma_n}(Y)$, $n\in \omega$, with $\gamma_n<\gamma$, we have
    \[
    A = \bigcup_n \big( Y\setminus B_n\big).
    \]
    Then 
    \[
    f^{-1}(A) = \bigcup_n \big( X\setminus f^{-1}(B_n)\big),
    \] 
    which is, by the inductive hypothesis, in $\mathbf{\Sigma}^0_{\alpha+\gamma}(X)$, and \eqref{E:sat} is proved. 

    By \eqref{E:sat}, given $n\in \omega$, if $A\in \mathbf{\Sigma}^0_{1+\alpha n}(Y)$, then 
    \[
    f^{-1}(A) \in \mathbf{\Sigma}^0_{\alpha+ \alpha n}(X)=\mathbf{\Sigma}^0_{\alpha (n+1)}(X). 
    \]
    In particular, $f^{-1}(A)\in \mathbf{\Sigma}^0_{\alpha \omega}(X)$ for $A\in \mathbf{\Sigma}^0_{\alpha \omega}(Y)$, so the lemma holds for $\beta=\alpha\omega$. We proceed inductively. Assume that the lemma holds for all $\delta<\omega_1$ such that $\alpha\omega \leq \delta <\beta$. Let $A\in \mathbf{\Sigma}^0_{\beta}(Y)$. Then for some $B_n\in \mathbf{\Sigma}^0_{\beta_n}(Y)$, $n\in \omega$, with $\alpha\omega\leq\beta_n<\beta$,  we have
    \[
    A = \bigcup_n \big( Y\setminus B_n\big).
    \]
    Then 
    \[
    f^{-1}(A) = \bigcup_n \big( X\setminus f^{-1}(B_n)\big),
    \] 
    which is in $\mathbf{\Sigma}^0_{\beta}(X)$ by the inductive hypothesis. 
\end{proof}

\begin{proof}[Proof of Lemma~\ref{lemma-Becker-Kechris}]
    Suppose towards a contradiction that there is a relation $E$ induced by a Borel action of a Polish group on a standard Borel space $Z$ and a classwise Borel embedding $f:X\to Z$ of $F$ into $E$ with a classwise Borel inverse $g:[f(X)]_E\to X$, where $[f(X)]_E$ is the $E$-saturation of $f(X)$. Since $[f(X)]_E$ is $E$-invariant and, by the definition of classwise Borel embeddability, Borel, we can assume without loss of generality that $Z=[f(X)]_E$. By \cite[5.2.1 Theorem]{Becker_Kechris_1996} we can further assume without loss of generality that $Z$ is a Polish space and $E$ is induced by a continuous action.
    
    Since both $f$ and $g$ are Borel functions, there is $\alpha<\omega_1$ such that $f, g$ are $\mathbf{\Sigma}^0_\alpha$-measurable. By Lemma~\ref{simple-lemma}, for every $\gamma\geq \alpha\omega$ and $A\in \mathbf{\Sigma}_{\gamma}^0(Z)$, $f^{-1}(A)\in \mathbf{\Sigma}_{\gamma}^0(X)$ and for all $B\in \mathbf{\Sigma}_{\gamma}^0(X)$, $g^{-1}(B)\in \mathbf{\Sigma}_{\gamma}^0(Z)$. 
    
    By assumption (ii), there is a limit $\beta\geq\alpha\omega$ and a non-empty $\mathbf{\Pi}^0_{\beta+1}$ $F$-invariant set $Y(\beta)\subset X$ with no $\mathbf{\Pi}^0_{\beta+1}$ $F$-classes in $Y(\beta)$. Then $g^{-1}(Y(\beta))$ is a non-empty $\mathbf{\Pi}^0_{\beta+1}$ $E$-invariant subset of $Z$. Observe that $\mathbb{E}_0 \not\leq_B E\restriction g^{-1}(Y(\beta))$, since otherwise we would have $\mathbb{E}_0 \leq_B E \leq_B F$, (where $g$ witnesses $E\leq_B F$), which contradicts assumption (i). Therefore by \cite[Corollary 5.1.10]{Becker_Kechris_1996} there is a $\mathbf{\Pi}^0_{\beta+1}$ $E$-class $A \subseteq g^{-1}(Y(\beta))$. Then $f^{-1}(A)$ is a $\mathbf{\Pi}^0_{\beta+1}$ $F$-class in $Y(\beta)$, a contradiction. 
\end{proof}
\subsection{Proof of Lemma~\ref{lemma_E0_does_not_reduce}}\label{subsection lemma E0} 

To prove Lemma~\ref{lemma_E0_does_not_reduce} we need Lemma~\ref{lemma-comeager}, in which the following relationship between $p$-groups and trees is employed. For $x\in \Mod(\mathcal{T})$ which encodes a $p$-group $H_x=(\omega, +_x)$, we define a tree $\Tree(x)$ on $\omega$ in the following way: $(n)\in \Tree(x)$ if and only if order on $n$ in $H_x$ is equal to $p$; further, if $(n_0, \dots, n_k)\in \Tree(x)$ and for some $n$, $pn=n_k$, then $(n_0, \dots, n_k, n)\in \Tree(x)$. The assignment $x\mapsto \Tree(x)$ is continuous and for all $x\in \Mod(\mathcal{T})$
\begin{equation}\label{length=height}
    l(H_x) = |\Tree(x)|.
\end{equation}
Moreover, it is clear that $\Tree(x)$ is well-founded if and only if $H_x$ is reduced. To prove \eqref{length=height}, we show that for all $\alpha<\omega_1$ and all $(n_0, \dots, n_k)\in \Tree(x)$
\begin{equation}\label{eq-height-and-subgroups}
    \rho_{\Tree(x)}(n_0, \dots, n_k)\geq \alpha \Leftrightarrow n_k \in p^\alpha H_x.
\end{equation}
Equivalence \eqref{eq-height-and-subgroups} clearly holds for $\alpha=0$. Let $\alpha<\omega_1$ and assume that \eqref{eq-height-and-subgroups} holds for all $\gamma<\alpha$. Let $t=(n_0, \dots, n_k)\in \Tree(x)$ and assume that $\rho_{\Tree(x)}(t)\geq \alpha$. Let $\gamma<\alpha$. There is $n\in \omega$ with $\rho_{\Tree(x)}(t^\smallfrown(n))\geq \gamma$. Then, by the inductive hypothesis, $n\in p^\gamma H_x$. Therefore, $n_k = pn \in p^{\gamma+1}H_x$. Since $\gamma<\alpha$ was arbitrary, $n_k\in p^\alpha H_x$. The converse implication is proved analogously.

For a function $h:X\to Y$, we define $h\times h:X\times X\to Y\times Y$ by 
\begin{equation}
    (h\times h )(x, x') = (h(x), h(x')).
\end{equation}

\begin{lemma}\label{lemma-comeager}
    Let $f:2^\omega\to \Mod(\mathcal{T})$ be a Borel function. There is $\xi<\omega_1$ such that the set
    \[
    \{x\in 2^\omega: l(H_{f(x)})<\xi\}
    \]
    is comeager. 
\end{lemma}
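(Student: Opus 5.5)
Write $T_x=\Tree(f(x))$. Then $x\mapsto T_x$ is a Borel map $2^\omega\to\tr$, being the composition of the Borel $f$ with the continuous map $\Tree$. By \eqref{length=height}, $l(H_{f(x)})=|T_x|$. So it suffices to find $\xi<\omega_1$ such that $\{x: |T_x|<\xi\}$ is comeager. The key idea is that, for a fixed countable ordinal, having Lusin height below it is a Borel condition on trees, with complexity bounded by Lemma~\ref{lemma-upper-bounds}.

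\textbf{Step 1 (Baire measurability of each piece).} For each $\xi<\omega_1$, set $A_\xi=\{x\in 2^\omega : |T_x|<\xi\}$. By Lemma~\ref{lemma-upper-bounds} (together with (iii) at successor stages), $\lht_{<\xi}$ is Borel. Hence $A_\xi$, its preimage under the Borel map $x\mapsto T_x$, is Borel and in particular has the Baire property. The sets $A_\xi$ increase with $\xi$, and their union over all $\xi<\omega_1$ is $2^\omega$, since every countable tree has countable Lusin height.

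\textbf{Step 2 (some $A_\xi$ is non-meager).} A union of $\aleph_1$ meager sets need not be meager, so we cannot conclude directly; this is the main obstacle. The plan is to use boundedness. Suppose every $A_\xi$ is meager. Consider the relation $R=\{(x,T): T=T_x\}$ and the $\mathbf{\Sigma}^1_1$ set $\{T_x : x\in 2^\omega\}$. The analytic set $\mathrm{ran}(x\mapsto T_x)\cap\wf$ is bounded in Lusin rank by the boundedness theorem. The ill-founded trees, however, are not directly handled by this. So I would instead argue via the comeager set on which $f$ is continuous, which is a standard move.

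Pick a dense $G_\delta$ set $G\subseteq 2^\omega$ on which $x\mapsto T_x$ is continuous. It then suffices to show that $\{|T_x| : x\in G\}$ is bounded below $\omega_1$. The map $x\mapsto |T_x|$ is essentially a rank: by Proposition~\ref{prop-ht}, $|T|$ is the supremum of $\rho_T(s)+1$ over $s\in T$ with $\rho_T(s)\neq\infty$. To bound it, consider the Borel (indeed, on $G$, continuous) map $(x,s)\mapsto (T_x)_s$. The set $\{(x,s): x\in G,\ (T_x)_s\in\wf\}$ is coanalytic. I would bound the ranks $\rho_{T_x}(s)$ on this set by a Baire-category version of boundedness: the Lusin--Sierpiński argument, namely that a $\mathbf{\Pi}^1_1$-rank on a coanalytic set is bounded on a comeager-in-itself subset. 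Concretely, for each $s$, the set $W_s=\{x\in G: (T_x)_s\in\wf\}$ is coanalytic and hence has the Baire property. The sets $W_s^\xi=\{x\in W_s : \rho((T_x)_s)<\xi\}$ are Borel, and $W_s$ is their increasing union. By the Kechris--Kunen style result that Baire category measures of a $\mathbf{\Pi}^1_1$-norm stabilize at a countable stage (since $W_s$ differs from some Borel set by a meager set, and that Borel set, being analytic, is a subset of $W_s$ modulo meager whose rank is then bounded by the boundedness theorem), there is $\xi_s<\omega_1$ such that $W_s\setminus W_s^{\xi_s}$ is meager.

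\textbf{Step 3 (conclusion).} Let $\xi=\sup_s(\xi_s+1)<\omega_1$, with the supremum taken over the countably many $s\in\omega^{<\omega}$. Off a meager set, every $s$ with $\rho_{T_x}(s)\neq\infty$ satisfies $\rho_{T_x}(s)<\xi_s$. By Proposition~\ref{prop-ht}, this gives $|T_x|\le\xi<\xi+1$ on a comeager set, as required.

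The main obstacle is the step inside Step 2. Choose a Borel set $B\subseteq W_s$ with $W_s\setminus B$ meager; such a $B$ exists because $W_s$ has the Baire property, and we can take $B$ to be a $G_\delta$ subset of the regular part. Then, since $B$ is Borel and $B\subseteq W_s$, the image of $B$ under the Borel map $x\mapsto (T_x)_s$ is an analytic subset of $\wf$. By the boundedness theorem, this image lies in $\wf_{<\xi_s}$ for some countable $\xi_s$.
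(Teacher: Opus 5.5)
Your proof is correct, but it takes a different route from the paper's.

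The paper works with the prewellordering $\{(x,y)\in 2^\omega\times 2^\omega: l(H_{f(x)})\le l(H_{f(y)})\}$. It shows that this set has the Baire property by pulling back the relation $|S|\le|T|$ on $\tr\times\tr$, which it expresses through the $\mathbf{\Pi}^1_1$-rank comparison of the well-founded subtrees $S_s$, $T_t$. It then applies a Kuratowski--Ulam argument for prewellorderings with the Baire property \cite[Exercise 8.49]{kechris1995classical} inside each basic clopen set $U_n$. This gives $\xi_n<\omega_1$ with $\{x\in U_n: l(H_{f(x)})<\xi_n\}$ non-meager. With $\xi=\sup_n\xi_n$, the resulting Borel set is non-meager in every basic open set, hence comeager.

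You instead argue node by node. For $s\neq\emptyset$, the coanalytic set $W_s$ of those $x$ with $s\in T_x$ and $(T_x)_s\in\wf$ has the Baire property, so it contains a $G_\delta$ set $B_s$ with $W_s\setminus B_s$ meager. The boundedness theorem, applied to the analytic set $\{(T_x)_s: x\in B_s\}\subseteq\wf$, then bounds $\rho_{T_x}(s)=|(T_x)_s|$ by some $\xi_s$ outside a meager set. Proposition~\ref{prop-ht} gives $|T_x|\le\sup_s\xi_s$ on a comeager set.

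Both proofs handle ill-founded trees the same way, through the ranks of nodes whose subtrees are well-founded. What each buys:
\begin{itemize}
\item Your argument uses only the Baire property of $\mathbf{\Pi}^1_1$ subsets of $2^\omega$ and classical boundedness. It needs no Baire property in the product and no localization to basic open sets. It also gives the finer fact that each node rank is individually bounded off a meager set.
\item The paper's argument is shorter once the exercise is available, and it is an instance of a general principle about prewellorderings with the Baire property.
\end{itemize}

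Some cleanup is needed when you write it up:
\begin{itemize}
\item Drop the abandoned opening of Step~2 (``suppose every $A_\xi$ is meager'') and the continuity set $G$, which plays no role.
\item In particular, delete the sentence ``it then suffices to show that $\{|T_x|: x\in G\}$ is bounded''. That boundedness is false in general: continuous $f$ with unbounded Ulm lengths exist, and for them one may take $G=2^\omega$. It is also not what you actually prove.
\item Define $(T_x)_s$ only for $s\in T_x$, so that it is an element of $\tr$.
\end{itemize}
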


\begin{proof}
    Let $U_n, n\in \omega$ be a clopen basis of $2^\omega$. If we show that the set 
    \[
    {\leq_l}= \{(x, y)\in 2^\omega\times 2^\omega: l(H_{f(x)})\leq l(H_{f(y)})\}
    \]
    has the Baire property, then for every $n$ by \cite[Exercise 8.49]{kechris1995classical} there is $\xi_n<\omega_1$ with 
    \[
    \{x\in U_n: l(H_{f(x)})<\xi_n\}
    \]
    non-meager and we can take $\xi=\sup_n \xi_n$. To show that $\leq_l$ has the Baire property, observe that
    \[
    \leq_l = (h\times h)^{-1}(\{(S, T)\in \tr \times \tr: |S|\leq |T|\}),
    \]
    where $h(x)=\Tree(f(x))$, and for $S, T\in \tr$, $|S|\leq |T| $ is equivalent to
    \[
    \forall s \big((s\in S \land s\neq \emptyset \land S_s\in \wf )\implies \exists t (t\in T \land t\neq \emptyset \land T_t\in \wf \land |S_s|\leq |T_t|)\big),
    \]
    which is a Baire measurable condition on $(S, T)$ since $T\mapsto |T|$ is a $\mathbf{\Pi}^1_1$ rank on $\wf$.    
\end{proof}

\begin{proof}[Proof of Lemma~\ref{lemma_E0_does_not_reduce}]
    Suppose towards a contradiction that $f:2^\omega\to \Mod(\mathcal{T})$ is a Borel reduction witnessing $\mathbb E_0\leq_B F_\mathbb B$. Then by Lemma~\ref{lemma-comeager} there is $\xi<\omega_1$ with 
    \[
    \{x\in 2^\omega: l(H_{f(x)})<\xi\}
    \]
    comeager. For $\alpha<\xi$ and $n\in \omega\cup \{\infty\}$, the sets
    \[
    \{x\in 2^\omega: U_\alpha(H_{f(x)})=n\}
    \]
    are preimages under Borel function $f$ of sets defined by $\mathcal{L}_{\omega_1, \omega}$-formulas \cite[Lemma 2.2]{BARWISE197025} and therefore, by \cite[Proposition 16.7]{kechris1995classical}, these sets are Borel. Since they are also $\mathbb E_0$-invariant, by the second topological $0$-$1$ law \cite[Theorem 8.47]{kechris1995classical}, for each $\alpha<\xi$ there is $n_\alpha$ such that 
    \[
    \{x\in 2^\omega: U_\alpha(H_{f(x)})=n_\alpha\}
    \]
    is comeager. Therefore the set 
    \[
    \{x\in 2^\omega: l(H_{f(x)})<\xi \land \forall \alpha<\xi \ (U_\alpha(H_{f(x)})=n_\alpha)\}
    \]
    is comeager as a countable intersection of comeager sets. However, this is a single $\mathbb E_0$-class, so it is a countable set, a contradiction. 
\end{proof}

\subsection{Proof of Lemma~\ref{becker-lem-nonexplicit}}\label{subsection-tr-gp}

In this subsection we prove an explicit version of Lemma~\ref{becker-lem-nonexplicit}, namely Lemma~\ref{becker-lem}. We first recall a standard way of associating a $p$-group $G(T)$ to a tree $T$: $G(T)$ is the free abelian group generated by the elements of $T$ modulo the words $\emptyset$ and $ps-t$ for all $s,t\in T$, where $t$ is an immediate predecessor of $s$. Here $ps$ denotes the sum $s+\dots +s$ with $p$-many $s$'s. This assignment can be formalized by a continuous function $g:\tr\to \Mod(\mathcal{T})$ satisfying $H_{g(T)}\cong G(T)$ for any $T\in \tr$. With this in mind, for $s\in T$ we denote by $ps$ the immediate predecessor of $s$ in $T$ for $s\neq \emptyset$, and we let $p\emptyset=\emptyset$. 

\begin{lemma}\label{lemma-tree-ulm-length}
    For $T\in \tr$ and $\alpha<\omega_1$ we have $l(G(T))=|T|$.
\end{lemma}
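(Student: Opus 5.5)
We will prove, by induction on $\alpha<\omega_1$, the analogue of \eqref{eq-height-and-subgroups}:
\begin{equation*}
s\in D^\alpha(T) \iff s\in p^\alpha G(T)\quad\text{(as an element of } G(T)\text{)}\qquad \text{for all } s\in T,
\end{equation*}
together with a description of $p^\alpha G(T)$ as the subgroup generated by the nodes of $D^\alpha(T)$. Once we know $p^\alpha G(T)=\langle D^\alpha(T)\rangle$ for every $\alpha$, we get $p^{\alpha}G(T)=p^{\alpha+1}G(T)$ iff $\langle D^\alpha(T)\rangle=\langle D^{\alpha+1}(T)\rangle$. This holds iff $D^\alpha(T)=D^{\alpha+1}(T)$, which gives $l(G(T))=|T|$.

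The first step is a normal form for $G(T)$, needed for the last equivalence. Every element can be written as $\sum_{s\in T\setminus\{\emptyset\}} c_s s$ with $0\le c_s<p$, and this representation is unique. This holds because $G(T)$ is the direct limit of the groups generated by the finite subtrees, and each node $s$ of length $n$ has order exactly $p^n$, with $p^{n-1}s$ the generator at the root's child below $s$. I would verify this by viewing $G(T)$ as a quotient of $\bigoplus_{s}\mathbb Z$ and exhibiting a concrete model: the normal forms, with addition done by carrying $p$ times $s$ into the predecessor $ps$. In this model the base-$p$ digits are well defined and the relations hold. In particular, a node $s$ lies in $\langle S\rangle$ for a subtree $S$ only if $s\in S$.

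Next, the induction on $p^\alpha G(T)=\langle D^\alpha(T)\rangle$. The case $\alpha=0$ is trivial. For the successor step, $p\langle D^\alpha(T)\rangle$ is generated by the elements $ps$ for $s\in D^\alpha(T)$. These are exactly the nodes having a child in $D^\alpha(T)$ (together with $\emptyset$, which is $0$), that is, the nodes of $D^{\alpha+1}(T)$. The limit step is the main obstacle: we must show $\bigcap_{\xi<\lambda}\langle D^\xi(T)\rangle=\langle\bigcap_{\xi<\lambda} D^\xi(T)\rangle$. Here the normal form does the work. Since each $D^\xi(T)$ is a subtree, the normal form of an element of $\langle D^\xi(T)\rangle$ only involves nodes of $D^\xi(T)$. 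Indeed, in the model sums of such normal forms carry only into predecessors, which stay in the subtree. By uniqueness, an element lying in every $\langle D^\xi(T)\rangle$ has its finitely many normal-form nodes in every $D^\xi(T)$, hence in $D^\lambda(T)$.

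Finally, combine the two steps. If $D^\alpha(T)\ne D^{\alpha+1}(T)$, pick a node in the difference. It lies in $p^\alpha G(T)$ but, by uniqueness of normal forms, not in $\langle D^{\alpha+1}(T)\rangle=p^{\alpha+1}G(T)$. The converse is immediate. Hence the least stabilization ordinals coincide, so $l(G(T))=|T|$. As a sanity check, when $T$ is ill-founded the surviving pruned part generates the divisible part, consistent with $p^{|T|}G(T)=\Div(G(T))$.
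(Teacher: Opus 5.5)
Your argument is correct and is essentially the paper's proof. The paper establishes $p^\alpha G(T)\cong G(D^\alpha(T))$ by the same transfinite induction, citing Crawley--Hales (3.4) and (3.5) for the successor and limit steps. You instead derive both steps from the unique base-$p$ normal form, and you also spell out why $\langle D^\alpha(T)\rangle=\langle D^{\alpha+1}(T)\rangle$ forces $D^\alpha(T)=D^{\alpha+1}(T)$, a point the paper's bare isomorphism statement leaves implicit. For the normal form itself, the cleanest verification is to show $|G(S)|=p^{|S|-1}$ for finite subtrees $S$ by adding one leaf at a time (each step embeds $G(S)$ with quotient $\mathbb{Z}/p$); together with surjectivity of carrying, this gives uniqueness without checking associativity of your carry-addition model.
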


\begin{proof}   
    This follows from the fact that $G(D^\alpha(T))\cong p^\alpha (G(T))$ for all ordinals $\alpha$, which can be proved by induction on $\alpha$ by using \cite[(3.4) and (3.5)]{crawley-hales}.
\end{proof}

\begin{lemma}\label{lemma-tilde-hard}
    For every ordinal $\alpha<\omega_1$ and $k\in \omega$, the sets
    \[
    A^{\omega\alpha+k}=\{x\in \Mod(\mathcal{T}):l(H_x)=\omega\alpha+k\},
    \]
    \[
    K^{\omega\alpha+k}=\{x\in A^{\omega\alpha+k}: \forall \xi<\omega\alpha+k \ ( U_\xi(H_x)=\infty)\}
    \]
    are $\mathbf{\Pi}^0_{2\alpha+2}$-hard. 
\end{lemma}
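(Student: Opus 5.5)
We reduce the tree sets from Lemma~\ref{lemma-lower-bound} to these group-theoretic sets via the continuous map $g:\tr\to\Mod(\mathcal{T})$, $T\mapsto G(T)$. For $A^{\omega\alpha+k}$ the argument is direct. Fix a $\mathbf{\Pi}^0_{2\alpha+2}$ set $A\subseteq 2^\omega$ and apply Lemma~\ref{lemma-lower-bound} with $\gamma=\omega\alpha+k$. This gives a continuous $T_A$ with $|T_A(x)|=\gamma$ for $x\in A$ and $|T_A(x)|=\gamma+1$ for $x\notin A$. By Lemma~\ref{lemma-tree-ulm-length}, the composition $x\mapsto g(T_A(x))$ is continuous and satisfies $l(H_{g(T_A(x))})=|T_A(x)|$. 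Hence it is a Wadge reduction of $A$ to $A^{\omega\alpha+k}$. Since $A$ was arbitrary, and every $\mathbf{\Pi}^0_{2\alpha+2}$ subset of a zero-dimensional Polish space reduces to one in $2^\omega$, this gives hardness.

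For $K^{\omega\alpha+k}$ we must also force every Ulm invariant below the length to be $\infty$ while keeping the length exactly $\gamma$ or $\gamma+1$. The plan is to replace $T_A(x)$ by an ``$\omega$-fold inflation'' $\widehat T(x)$ that preserves the Lusin height. A first candidate is a sum $\bigoplus_n$ of countably many copies of $T_A(x)$, joined with a fixed tree $Q$ of height $\gamma$ whose group $G(Q)$ has all Ulm invariants $U_\xi=\infty$ for $\xi<\gamma$ and length $\gamma$. Such a $Q$ exists: take a sum, with infinite multiplicity, of trees $Q_\xi$ for $\xi<\gamma$ with $G(Q_\xi)$ having Ulm length $\xi+1$ and $U_\xi(G(Q_\xi))\neq 0$. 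For instance, $Q_\xi$ can be a canonical tree of height $\xi+1$ whose top node $\emptyset$ is reachable only through nodes of rank exactly $\xi$.

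Then $G(\widehat T(x))$ is a direct sum of the groups of the pieces, by a gluing formula for $G$ of joins and sums: essentially $G(S\vee T)\cong G(S)\oplus G(T)$, and $G$ of a sum is the direct sum of the $G$ of the pieces. Ulm invariants are additive on direct sums and length is the supremum. So for $x\in A$ the length is $\gamma$ with all $U_\xi=\infty$ for $\xi<\gamma$, giving $g(\widehat T(x))\in K^{\gamma}$. For $x\notin A$ the length is $\gamma+1$, so the point lies outside $A^\gamma\supseteq K^\gamma$. The map is continuous because sums, joins and $g$ are. We also need the heights to stay correct: by Lemma~\ref{lem-tech-tree-ops}(i) and the definition of join, $|\widehat T(x)|=\max(|T_A(x)|,|Q|)$ holds as soon as the top-level shift caused by $\bigoplus$ is handled. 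To avoid off-by-one errors, we will use joins $\vee$ rather than $\bigoplus$ at the root, since $D^\beta(S\vee T)=D^\beta(S)\vee D^\beta(T)$.

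The main obstacle is the group-theoretic bookkeeping: verifying $G(S\vee T)\cong G(S)\oplus G(T)$, possibly modulo identifying the root, which is killed, and constructing the $Q_\xi$ with $U_\xi\neq 0$ while keeping the total length exactly $\gamma$. The delicate case is $\gamma=\omega\alpha$ with $\alpha$ a limit, or more generally $\gamma$ a limit ordinal. There $Q$ must have length exactly $\gamma$, which holds since the supremum of the $\xi+1$ over $\xi<\gamma$ is $\gamma$. We will verify $U_\xi(G(Q_\xi))\ne0$ by exhibiting the element coming from a node of rank $\xi$ with no proper extension of rank greater than $\xi$, using~\eqref{eq-height-and-subgroups}-style height computations via Lemma~\ref{lemma-tree-ulm-length}.
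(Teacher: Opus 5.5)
Your proposal is correct and is essentially the paper's argument. For $A^{\omega\alpha+k}$ the paper likewise pulls back along $g$, citing Theorem~\ref{thm-complete}(ii),(iii), which rest on Lemma~\ref{lemma-lower-bound}. For $K^{\omega\alpha+k}$ it likewise adds a fixed group of length $\omega\alpha+k$ with all invariants below the length equal to $\infty$: it uses $T\mapsto\theta(g(T),y)$ with $y\in K^{\omega\alpha+k}$ taken from Barwise--Eklof, reducing $\lht_{<\omega\alpha+k+1}$. Your tree $Q$ just builds such a $y$ by hand, and any well-founded $Q_\xi$ with $|Q_\xi|=\xi+1$ works, since a group of Ulm length $\xi+1$ has $U_\xi\neq 0$.

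Two small corrections. First, the $\omega$-fold inflation of $T_A(x)$ is unnecessary: $T_A(x)\vee Q$, or equivalently $\theta(g(T_A(x)),g(Q))$, already suffices. Second, you must use countable joins throughout, including when assembling $Q$, and not the paper's $\bigoplus$. The tree sum adds a level, so $G(\bigoplus_n T_n)\not\cong\bigoplus_n G(T_n)$ in general; for example, $T_n=\{\emptyset\}$ gives a nonzero group on the left and $0$ on the right. It also shifts heights by one in the successor case.
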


\begin{proof}
    By Lemma~\ref{lemma-tree-ulm-length} we have that $g^{-1}(A^{\omega\alpha+k})=\lht_{=\omega\alpha+k}$. It follows from parts (ii) and (iii) of Theorem~\ref{thm-complete} that 
    $A^{\omega\alpha+k}$ is $\mathbf{\Pi}^0_{2\alpha+2}$-hard. 
    
    An easy calculation shows that for two countable abelian $p$-groups $H_1$ and $H_2$, we have 
    \begin{equation}\label{lenght-of-sum}
        l(H_1\oplus H_2)=\max\{l(H_1),l(H_2)\}
    \end{equation} and 
    \begin{equation}\label{inv-of-sum}
    U_\xi(H_1\oplus H_2)=U_\xi(H_1)+U_\xi(H_2)
    \end{equation}
    for $\xi<\omega_1$. Moreover, it follows from \cite[Theorem 4.1]{BARWISE197025} that $K^\gamma \neq \emptyset$ for any $\gamma<\omega_1$. Take $y\in K^{\omega\alpha+k}$. Recall that $\theta:\Mod(\mathcal{T})^2\to \Mod(\mathcal{T})$ is a continuous function  such that $H_{\theta(x,y)}\cong H_x\oplus H_{y}$. By Lemma~\ref{lemma-tree-ulm-length}, \eqref{lenght-of-sum}, and \eqref{inv-of-sum} we have
    \begin{align*}
        |T|\leq \omega\alpha+k\iff l(g(T))\leq \omega\alpha+k \iff \theta(g(T),y)\in K^{\omega\alpha+k}.
    \end{align*}
     Therefore, by Theorem~\ref{thm-complete}(i), and the continuity of $g$ and $\theta$, $K^{\omega\alpha+k}$ is $\mathbf{\Pi}^0_{2\alpha+2}$-hard. 
\end{proof}

\begin{lemma}\label{becker-lem}
    Let $\beta<\omega_1$ be such that $\beta=\omega\beta$ and let 
    \[
    Y(\beta)=\{x\in \Mod(\mathcal{T}):l(H_x)\geq \beta \land \forall \alpha<\beta \ (U_{\alpha}(H_x)=\infty)\}.
    \]
    Then
    \begin{itemize}
        \item[(i)] $Y(\beta)$ is a non-empty $\mathbf{\Pi}^0_{\beta+1}$ $F_{\mathbb B}$-invariant set,
        \item[(ii)] every $F_{\mathbb B}$-class contained in $Y(\beta)$ is $\mathbf{\Pi}^0_{\beta+2}$-hard.
    \end{itemize}
\end{lemma}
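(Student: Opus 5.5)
The plan is to prove (i) by a direct definability computation. For (ii), I will reduce an arbitrary $\mathbf{\Pi}^0_{\beta+2}$ set $A\subseteq 2^\omega$ to a given $F_{\mathbb B}$-class, by adding the group $G(T_A(x))$ to a fixed representative of the class. Here $T_A$ is the reduction from Lemma~\ref{lemma-lower-bound}. Two ordinal facts are used throughout. First, since $\beta=\omega\beta$, the ordinal $\beta$ is a limit with $\beta\geq\omega$, so $2\beta=\beta$. Second, every $\alpha<\beta$ has the form $\omega\alpha'+k$ with $\alpha'<\beta$, and then $2\alpha'+1<\beta$.

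For (i), invariance is immediate: the Ulm invariants and the Ulm length of $H$ coincide with those of $\Red(H)$. Indeed $p^\alpha\Div(H)=\Div(H)$, so $p^\alpha H=p^\alpha\Red(H)\oplus\Div(H)$. Non-emptiness follows from $K^\beta\neq\emptyset$ \cite[Theorem 4.1]{BARWISE197025}, since $K^\beta\subseteq Y(\beta)$.

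For the complexity, fix $h\in\omega$ and consider the tree of $p$-division chains starting at $h$ in $H_x$, as in the construction of $\Tree(x)$ but rooted at $h$. This tree depends continuously on $(x,h)$. The argument proving \eqref{eq-height-and-subgroups} shows that $h\in p^\alpha H_x$ iff $\emptyset\in D^\alpha$ of this tree, with the root convention adjusted appropriately. By Lemma~\ref{D-upper-bd}, for $\alpha=\omega\alpha'+k<\beta$ the condition ``$h\in p^\alpha H_x$'' is $\mathbf{\Sigma}^0_{2\alpha'+1}$, hence lies in $\mathbf{\Delta}^0_\beta$.

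The condition $U_\alpha(H_x)=\infty$ says: for every $n$ there are $h_1,\dots,h_n\in (p^\alpha H_x)[p]$ such that no nontrivial $\mathbb Z_p$-combination lies in $p^{\alpha+1}H_x$. This is a countable intersection of countable unions of Boolean combinations of $\mathbf{\Delta}^0_\beta$ conditions, so it is in $\mathbf{\Delta}^0_\beta$ as well. Moreover, $U_\alpha\neq 0$ forces $p^\alpha H\neq p^{\alpha+1}H$, so the clause $l(H_x)\geq\beta$ is redundant. Hence $Y(\beta)$ is a countable intersection of $\mathbf{\Delta}^0_\beta$ sets, so it is $\mathbf{\Pi}^0_\beta\subseteq\mathbf{\Pi}^0_{\beta+1}$.

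For (ii), fix a class $C\subseteq Y(\beta)$, pick $z\in C$, and let $R=\Red(H_z)$. Let $\delta\geq\beta$ be the least ordinal with $U_\delta(R)<\infty$. Such a $\delta$ exists because $U_{l(R)}(R)=0$, and by the choice of $\delta$ we have $U_\alpha(R)=\infty$ for all $\alpha<\delta$. Apply Lemma~\ref{lemma-lower-bound} with $\alpha=\beta$ (note $2\beta+2=\beta+2$) and $\gamma=\delta\geq\omega\beta=\beta$. This gives a continuous $T_A:2^\omega\to\tr$ with $|T_A(x)|=\delta$ if $x\in A$ and $|T_A(x)|=\delta+1$ otherwise. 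The candidate reduction is $x\mapsto\theta(g(T_A(x)),z)$, which is continuous.

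If $x\in A$, then $l(G(T_A(x)))=\delta$ by Lemma~\ref{lemma-tree-ulm-length}. So $G(T_A(x))$ has nonzero Ulm invariants only at levels $<\delta$, where $R$ already has $\infty$. By \eqref{lenght-of-sum} and \eqref{inv-of-sum}, the reduced part of the sum has the same Ulm sequence as $R$, so it is isomorphic to $R$ by Ulm's theorem. Any divisible part of $G(T_A(x))$ is irrelevant for $F_{\mathbb B}$.

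If $x\notin A$, then $G=G(T_A(x))$ satisfies $p^\delta G\neq p^{\delta+1}G$. I will use the standard fact that a $p$-group $K$ with $K[p]\subseteq pK$ is divisible, applied to $K=p^\delta G$. It shows $U_\delta(G)\geq1$. Therefore $U_\delta$ of the sum is either strictly larger than the finite number $U_\delta(R)$ or becomes $\infty$, and in either case the sum is not in $C$. So $C$ is $\mathbf{\Pi}^0_{\beta+2}$-hard.

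I expect the main obstacle to be the choice of $\delta$. Adding $G(T)$ must leave the class unchanged on the $A$ side and yet be detectable on the complement. If one used $\gamma=\beta$ blindly, the change in $U_\beta$ might be absorbed when $U_\beta(R)=\infty$. Choosing $\delta$ as the first level at or above $\beta$ with a finite invariant handles both requirements at once. The remaining work is routine checking: the fact $U_\delta(G)\geq 1$, the additivity formulas \eqref{lenght-of-sum} and \eqref{inv-of-sum}, and the ordinal bookkeeping.
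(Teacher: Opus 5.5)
Your structure is the paper's. You compute $Y(\beta)\in\mathbf{\Pi}^0_\beta$ directly, and for (ii) you add $G(T_A(x))$ to a fixed representative, with $T_A$ from Lemma~\ref{lemma-lower-bound} at $\alpha=\beta$ and $\gamma$ the first level carrying a finite Ulm invariant. Your choice of $\delta\geq\beta$ also absorbs the paper's separate case $C=K^\lambda$ (there $\delta=l(R)$ and $U_\delta(R)=0$), so Lemma~\ref{lemma-tilde-hard} becomes unnecessary. Your remark that the clause $l(H_x)\geq\beta$ is redundant is correct.

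However, the step producing $U_\delta(G)\geq 1$ when $x\notin A$ is wrong as written. The ``standard fact'' that a $p$-group $K$ with $K[p]\subseteq pK$ is divisible is false: $K=\mathbb Z(p^2)$ satisfies $K[p]=pK$ and is not divisible. The correct criterion asks that every element of $K[p]$ lie in $p^nK$ for all $n$. Accordingly, $p^\delta G\neq p^{\delta+1}G$ alone does not give $U_\delta(G)\geq 1$: for $G=\mathbb Z(p^2)$ and $\delta=0$ one has $U_0(G)=0$. The complement direction of your reduction rests entirely on this inequality, so the argument has a hole there.

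The repair uses information you discarded: $l(G)=|T_A(x)|=\delta+1$, so also $p^{\delta+1}G=\Div(G)$. Pass to $\Red(G)$, which has the same length and Ulm invariants. Then $p^{\delta+1}\Red(G)=0\neq p^\delta\Red(G)$, so $p^\delta\Red(G)$ is a nonzero group annihilated by $p$, and $U_\delta(G)=\dim_{\mathbb Z_p}p^\delta\Red(G)\geq 1$. This is exactly how the paper argues. Equivalently, your divisibility lemma becomes true once you add the hypothesis that $pK$ is divisible, which holds for $K=p^\delta G$ here.

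A smaller point in (i): $\mathbf{\Delta}^0_\beta$ is not closed under countable unions. So the inference ``countable intersections of countable unions of Boolean combinations of $\mathbf{\Delta}^0_\beta$ conditions are $\mathbf{\Delta}^0_\beta$'' is invalid as stated. What you actually have is that, for fixed $\alpha=\omega\alpha'+k$, all the membership conditions are $\mathbf{\Sigma}^0_{2\alpha'+1}$. Hence $\{x: U_\alpha(H_x)=\infty\}$ is $\mathbf{\Pi}^0_{2\alpha'+3}$ with $2\alpha'+3<\beta$, which is what you need.
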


\begin{proof}
        The set $Y(\beta)$ is non-empty by \cite[Theorem 4.1]{BARWISE197025}, and it clearly is $F_{\mathbb B}$-invariant. To finish proof of part (i) we will show that $Y(\beta)$ is actually $\mathbf{\Pi}^0_\beta$. Indeed, observe that for fixed $\alpha<\omega_1$ and $k, n\in \omega$, $n\in p^{\omega\alpha+k}H_x$ is a $\mathbf{\Sigma}^0_{2\alpha+1}$ condition on $x$ by an easy calculation as in the proof of Lemma~\ref{D-upper-bd}. Therefore, $l(H_x)\geq \beta$, which is equivalent to
        \[
        \forall \xi<\beta \ \exists n\in \omega \ (n\in p^{\xi}H_x\setminus p^{\xi+1}H_x),
        \]
        is a $\mathbf{\Pi}^0_\beta$ condition for a limit ordinal $\beta<\omega_1$. 
        Furthermore, note that for fixed $\alpha<\omega_1$ and $N\in \omega$ the condition $U_\alpha(H_x)\geq N$ is equivalent to
        \begin{align*}
            \exists n_1, \dots, n_N \big(& n_i\in p^\alpha H_x \land pn_i=e(x) \text{ for each } i\\ 
            &\text{and no non-trivial linear combination of $n_1, \dots, n_N$ is in }p^{\alpha+1}H_x \big),
        \end{align*}
        where $e(x)$ is the neutral element of $H_x$. So by the previous observations the condition
        \[
        \forall \alpha <\beta \ (U_\alpha(H_x)=\infty)
        \]
        is also a $\mathbf{\Pi}^0_\beta$ condition for limit $\beta<\omega_1$. 
        
        For part (ii), fix some $F_{\mathbb B}$-class $C$ contained in $Y(\beta)$. Every element of $C$ codes a group of the same Ulm sequence $(U_\alpha)_{\alpha<\omega_1}$ with Ulm length $\lambda$ for some $\lambda<\omega_1$. 
        
        If $U_\alpha=\infty$ for all $\alpha<\lambda$, then $C= K^\lambda$. Note that for $\lambda=\omega\alpha_\lambda+k_\lambda$, where $k_\lambda\in \omega$, since $\lambda \geq \beta$ and $\beta=\omega\beta$, we have $\alpha_\lambda\geq \beta$. Therefore $C$ is $\mathbf{\Pi}_{\beta+2}^0$-hard by Lemma~\ref{lemma-tilde-hard}.
        
        Otherwise, let $\gamma<\lambda$ be the least ordinal such that $U_\gamma< \infty$. Fix $A\in \mathbf{\Pi}_{\beta+2}^0(2^\omega)$. Let $T_A$ be the function from Lemma~\ref{lemma-lower-bound} with $\alpha=\beta$ and note that $\gamma\geq \beta=\omega\beta$ by definition of $Y(\beta)$. So we have 
        \begin{align*}
            x\in A&\implies |T_A(x)|= \gamma,\\
            x\notin A&\implies |T_A(x)|=\gamma+1.
        \end{align*}
        Now set $f(x)=g(T_A(x))$ and pick any $y\in C$. It suffices to show that
        \begin{equation}\label{claim}
            x\in A\iff\theta(f(x),y)\in C.
        \end{equation}
        
        To see \eqref{claim}, suppose first that $x\in A$. Then by Lemma~\ref{lemma-tree-ulm-length} we have $l(H_{f(x)})=|T_A(x)|$. So $l(H_{f(x)})=\gamma<l(H_y)$, and hence $l(H_{\theta(f(x),y)})=l(H_y)$ by \eqref{lenght-of-sum}. Using \eqref{inv-of-sum} we get that $U_\alpha(H_{f(x)}\oplus H_{y})=U_\alpha(H_y)$ since $U_\alpha(H_y)=\infty$ if $\alpha<\gamma$ and $U_\alpha(H_{f(x)})=0$ if $\alpha\geq \gamma$. It follows from Ulm's theorem that $\Red(H_y)\cong  \Red(H_{\theta(f(x),y)})$ and therefore $\theta(f(x),y)\in C$. 
        
        Now suppose $x\notin A$. Then $l(H_{f(x)})=|T_A(x)|=\gamma+1$. We claim that for any countable abelian $p$-group $H$, if $l(H)=\gamma+1$ then $U_\gamma(H)\neq 0$. Indeed, this is easy to see from the definition of $U_\gamma$ if $H$ is reduced. Since $H\cong \Red(H)\oplus \Div(H)$, the conclusion follows from \eqref{inv-of-sum}. So we have that $U_\gamma(H_{f(x)})\neq 0$. Then again by \eqref{inv-of-sum}, $U_\gamma(H_{\theta(f(x),y)})=U_\gamma(H_{f(x)})+U_\gamma(H_{y})>U_\gamma(H_y)$ since $U_\gamma(H_y)<\infty$. Therefore, $\Red(H_{\theta(f(x),y)})\not\cong \Red(H_y)$ and so $\theta(f(x),y)\notin C$, as desired.
\end{proof}

\bibliographystyle{amsplain}

\bibliography{impbib}

\end{document}